\title{Rigid Algebras and Cospans}
\begin{document}
\maketitle

\begin{abstract}
  We introduce rigid algebras, a generalization of rigid categories to  arbitrary symmetric monoidal $(\infty,2)$-categories.
  We develop their general theory, showing in particular that the \emph{a priori} $(\infty,2)$-category of rigid algebras is in fact an $(\infty,1)$-category.
  For the $(\infty,2)$-category of cospans in an $(\infty,1)$-category $\cC$, we show that the $(\infty,1)$-category of rigid commutative algebras is canonically identified with $\cC$. 
  This identification is used to construct an adjunction between the cospan construction and the functor assigning to a symmetric monoidal $(\infty,2)$-category its $(\infty,1)$-category of rigid commutative algebras.
\end{abstract}

\begin{figure}[H]
  \centering
  \includegraphics[scale=0.18]{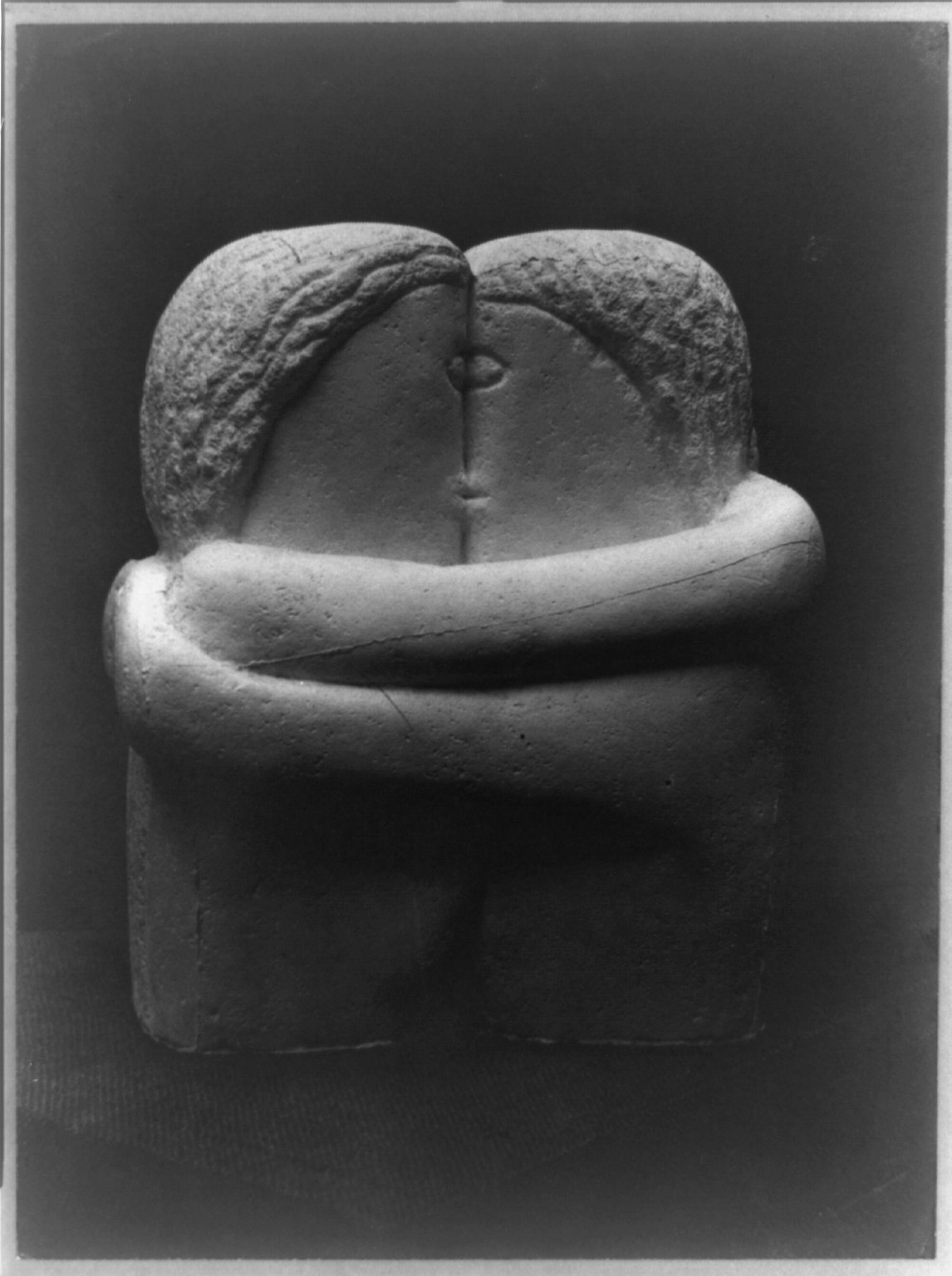}
  \captionsetup{labelformat=empty}
  \caption{The Kiss by Constantin Br\^{a}ncu\c{s}i}
\end{figure}

\pagebreak

\tableofcontents

\pagebreak

\section{Introduction}

Throughout the paper, we will refer to $(\infty,1)$-categories as $1$-categories, or simply categories. Similarly, $(\infty,2)$-categories will be called $2$-categories. When referring to ordinary categories and 2-categories, we specify them as $(1,1)$-categories and $(2,2)$-categories.

\subsection{Overview}

\subsubsection*{Rigid categories}

Rigid categories are a particularly well-behaved class of stable presentably monoidal categories.
Originally developed in the context of the geometric Langlands program \cite{gaitsgory2013sheaves,gaitsgory2019study,arinkin2022stacklocalsystemsrestricted}, and later studied in \cite{hoyois2021categorifiedgrothendieckriemannroch}, rigid categories have since become increasingly significant in condensed mathematics and continuous K-theory \cite{alex2025localizing}.
Among the rigid categories, those that are compactly generated are especially tractable: in such cases, rigidity is equivalent to the compact objects and dualizable objects coinciding.
Typical examples include the category of spectra $\Sp$, or more generally, the category of $R$-module spectra for a commutative ring spectrum $R$.
In contrast, for categories that are not compactly generated, compact objects do not adequately control the structure of the category.
In such cases, rigidity is instead defined in terms of the categorical properties of the monoidal structure, rather than through the behavior of a subcategory of compact objects.

\begin{definition}\label{d:rigid-category}
  A stable presentably monoidal category $\cA$ is called \emph{rigid} if:
  \begin{enumerate}
    \item the unit $\one\in \cA$ is compact, and
    \item the multiplication map $\mu:\cA\otimes \cA\to\cA$ is left adjoint as a map of $\cA$-bimodules, i.e., it is internally left adjoint in $\Prlst$ and a certain projection formula holds.
  \end{enumerate}
\end{definition}

An example of a rigid category that is not compactly generated is the category $\Shv(X)$ of sheaves of spectra on a compact Hausdorff space $X$.
Although rigid categories are not necessarily compactly generated, they are always \emph{compactly assembled}, meaning that they are dualizable objects in $\Prlst$.

The first condition in the above definition, that the unit $\one\in \cA$ is compact, can also be formulated in terms of adjunctions: it is equivalent to the unit map $\Sp\to \cA$ being internally left adjoint.
Using this formulation, the notion of rigid categories becomes readily generalizable. 
For instance, in \cite{ramzi2024locallyrigidinftycategories}, Ramzi develops the theory of rigidity for $\cB$-linear categories, where $\cB$ is any presentably symmetric monoidal category:

\begin{definition}
  A $\cB$-linear presentably monoidal category $\cA$ is called \emph{rigid} if:
  \begin{enumerate}
    \item the unit map $\eta\colon \cB\to \cA$ is internally left adjoint in $\Prl_\cB$, and
    \item the multiplication map $\mu:\cA\otimes_\cB \cA\to\cA$ is left adjoint as a map of $\cA$-bimodules.
  \end{enumerate}
\end{definition}
The definition in the setting of stable categories corresponds to the case where $\cB=\Sp$.

\subsubsection*{Rigid algebras}

This work starts with the observation that the definition of rigidity can be extended beyond the presentable setting to any symmetric monoidal 2-category $\cU$.

\begin{definition}
  An algebra $A$ in $\cU$ is called \emph{rigid} if:
  \begin{enumerate}
    \item the unit map $\eta \colon \one\to A$ is left adjoint in $\cU$, and
    \item the multiplication map $\mu \colon A\otimes A\to A$ is left adjoint as a map of $A$-bimodules.
  \end{enumerate}
\end{definition}

We denote by $\Algrig(\cU)\subseteq\Alg(\cU)$ the full sub-2-category of rigid algebras.
Like rigid categories, rigid algebras are always dualizable.
In fact, every rigid algebra carries a natural \emph{Frobenius algebra} structure, that connects its duality with its algebra structure.
Frobenius algebras have several equivalent definitions; we will present one that makes the relation to rigid algebras most transparent:

\begin{definition}[\cref{c:frob-equiv-explicit}]
  A \emph{Frobenius algebra} in a symmetric monoidal 1-category is an algebra $A$ equipped with:
  \begin{enumerate}
    \item A map $\epsilon\colon A\to \one$, called the \emph{counit}.
    \item A map $\delta\colon A\to A\otimes A$ of $A$-bimodules, called the \emph{comultiplication}.
    \item These satisfy counitality relations\footnote{For coherence, we should take only one of the relations as data, and merely require the other to exists.}:
     $$(\epsilon\otimes \id_A)\delta\simeq \id_A\simeq (\id_A\otimes \epsilon)\delta.$$
  \end{enumerate}
\end{definition}

The counit and comultiplication extend canonically to a full coalgebra structure.
The interaction between the algebra and coalgebra structures induces a self-duality, identifying a Frobenius algebra with its dual object.

In the case of a rigid algebra, we obtain a Frobenius structure by taking the right adjoints of the unit and multiplication as the counit and comultiplication respectively.
In this way, rigid algebras act as a categorification of Frobenius algebras: 
what is additional structure in the 1-categorical setting arises canonically through the existence of adjoints in the 2-categorical setting.
A (2,2)-categorical version of rigid algebras was studied by Walters and Wood \cite{Walters2008frobenius-object} under the name \emph{Frobenius objects}. Another related notion, also named \emph{rigid algebras}, was studied in \cite{brochier2021dualizabilitybraided} and further developed in \cite{decoppet2023rigidinfusion}.

We see that rigid algebras connect the symmetric monoidal notion of duality with the 2-categorical notion of adjunction.
This connection extends to maps of rigid algebras.
In general, given a map of dualizable objects $f\colon X\to Y$, one can construct a \emph{transpose} map between their duals $\trans{f}\colon \dual{Y}\to\dual{X}$.
If $A$ and $B$ are Frobenius algebras, and hence self-dual, then the transpose of a morphism $f\colon A\to B$ is a morphism $\trans{f}\colon B\to A$.

\begin{alphThm}[\cref{p:rigid-map-adjoint,c:rig-is-1-category}]\label{t:restatable-rigid-algebras}
  Let $A$ and $B$ be rigid algebras in $\cU$, then:
  \begin{enumerate}
    \item Every algebra map $f\colon A\to B$ is left adjoint to its transpose $\trans{f}\colon B\to A$.\label{i:adjoint-transpose}
    \item For any other algebra map $g\colon A\to B$, every 2-morphism of algebras $\alpha\colon f\Rightarrow g$ is invertible.\label{i:2-morphism-inv}
  \end{enumerate}
\end{alphThm}

Condition (\ref{i:2-morphism-inv}) implies that $\Algrig(\cU)$, although defined as a full sub-2-category of $\Alg(\cU)$, is in fact a \emph{1-category}.
The proof of \cref{t:restatable-rigid-algebras} is based on an analysis of rigid algebras in the lax arrow category (\cref{p:rigid-map-is-rigid}).

\subsubsection*{Cospans and rigid commutative algebras}

The most common examples of 2-categories are various ``categories-of-categories'', such as $\Prl_\cB$, and indeed it is in those contexts that rigid algebras were originally studied.
With our extended definition, however, rigid algebras can be considered in 2-categories of a broader variety.
In the second half of the paper, we turn our attention to the 2-category of (co)spans.

The 2-category of spans is known to satisfy a well-established universal property \cite{gaitsgory2019study,stefanich2020highersheaftheoryi,Macpherson2020ABY}: 
for any 1-category $\cC$ with finite limits, the 2-category $\Span(\cC)$ of spans in $\cC$ is initial among 2-categories under $\cC^\op$ satisfying a certain \emph{base change} property. 
Equipping $\cC$ with the cartesian symmetric monoidal structure, or equivalently $\cC^\op$ with the cocartesian structure, this enhances to a symmetric monoidal universal property.
In particular, as every $A\in \cC^\op$ has a unique commutative algebra structure, its image under $\cC^\op\to \Span(\cC)$ is endowed with a canonical commutative algebra structure.

We will use extensively the cocartesian structure on $\cC^\op$ and the resulting commutative algebra structures, so for ease of notation we replace $\cC$ with $\cC^\op$ and work with the 2-category of \emph{cospans}, which satisfies a corresponding universal property (\cref{p:universal-propertey-of-cospans}).
The main result of this paper establishes a new universal property for cospans, formulated in terms of rigid commutative algebras.
We show that the category of rigid commutative algebras in $\coSpan(\cC)$ is canonically identified with $\cC$ (\cref{p:can-equiv-rig}), and using this identification, we construct an adjunction between cospans and rigid commutative algebras.

\begin{alphThm}[\cref{t:cospan_cfrob_adj}]\label{t:restatable-rigid-cospan-adj}
  There is an adjunction 
  $$\coSpan\dashv \CAlgrig.$$
  Moreover, the unit of this adjunction is an isomorphism:
  $$\cC\isoto \CAlgrig(\coSpan(\cC)).$$
\end{alphThm}

Some care is required in formulating the above adjunction, since the two functors involved do not seem to share compatible sources and targets.
The cospan functor
$$\coSpan\colon \Cat_1^\rex\to \Catsm_2$$
takes 1-categories with finite colimits to symmetric monoidal 2-categories, while the rigid commutative algebras functor
$$\CAlgrig\colon \Catsm_2\to \Cat_1$$
returns 1-categories which do not necessarily have finite colimits.

To resolve this mismatch, we introduce a compatibility condition: we say that a symmetric monoidal 2-category 
$\cU$ is \emph{rigid bar compatible} if the category of rigid commutative algebras in 
$\cU$ admits pushouts.
Similarly, a symmetric monoidal 2-functor is rigid bar compatible if it preserves pushouts of rigid commutative algebras (see \cref{d:rigid-bar-compatible} for a precise statement).
This defines a subcategory $\Catrigbar\subseteq \Catsm_2$, and after restricting to this subcategory the sources and targets of the functors align:
$$\coSpan\colon \Cat_1^{\rex}\rightleftarrows\Catrigbar\colon \CAlgrig.$$

It is in this setting that we establish the adjunction.
In other words, we get an equivalence between right exact
functors $\cC\to \CAlgrig(\cU)$ and rigid bar compatible functors $\coSpan(\cC)\to \cU$.
In particular, we need to verify that the 2-category of cospans is rigid bar compatible: this follows from the isomorphism $\cC\isoto \CAlgrig(\coSpan(\cC))$ presented above.
We also note that $\Prl_\cB$, the primary context in which rigid algebras have previously been studied, is rigid bar compatible (\cref{e:prl-is-rigid-bar-compatible}).

\subsection{Organization}

We now describe the contents of each section of the paper.

We begin in \cref{s:projection-formula} by studying adjunctions of bimodule maps. 
Such adjunctions are characterized by a \emph{projection formula}, which we establish in the context of an arbitrary symmetric monoidal 2-category.
Here, bimodules are defined as symmetric monoidal functors from the envelope of the bimodule operad.
Thus, we will deduce the projection formula from a more general criterion for when a symmetric monoidal natural transformation admits a (lax) adjoint. 
To formulate this criterion, we rely on the theory of adjunctions between lax natural transformations as developed in \cite{abellan2024adjunctions}. 

In \cref{s:rigid-algebras}, we study the category of rigid algebras. 
We begin in the 1-categorical setting by reviewing the notions of duality and Frobenius algebras from \cite{ha}, with particular attention to duality of bimodules. We then pass to the 2-categorical setting, where we recall a characterization of duality in the lax arrow category by \cite{hoyois2017highertracesnoncommutativemotives}, and extend it to the setting of bimodules.
The main result of this section is that algebra maps between rigid algebras are themselves rigid algebras in the lax arrow category.
As a consequence, we show that such maps are adjoint to their transpose, and that every 2-morphism between rigid algebras is invertible.

Moving from the general theory to a specific example, in \cref{s:cospans} we study the 2-category of cospans.
After recalling the well-established universal property of cospans, we investigate the structure of rigid algebras in this 2-category.
The main result of this section is that the rigid commutative algebras in $\coSpan(\cC)$ are canonically identified with $\cC$, via the right-way functor.

In \cref{s:the-adjunction} we use this identification to formulate a new universal property for cospans, expressed as an adjunction between cospans and rigid commutative algebras.
To make this adjunction precise, we introduce the notion of rigid bar compatible categories.
The proof of the adjunction builds on the standard universal property of cospans.
In order to apply this property, we need to show that the forgetful functor from rigid commutative algebras satisfies \emph{cobase change}.
We prove that this holds in the setting of rigid bar compatible categories.

\subsection{Notations}

Throughout the paper we use the following notations:
\begin{itemize}
  \item We refer to $(\infty,1)$-categories as 1-categories, or simply categories. We usually denote 1-categories by $\cC$, $\cD$ or $\cE$. 
  \item $\Cat_1$ is the large 1-category of small 1-categories. The space of functors between $\cC,\cD\in \Cat_1$ is denoted $\Map(\cC,\cD)$ and the functor category is denoted $\Fun(\cC,\cD)$.
  \item $\Cat_1^\rex$ is the large 1-category of small 1-categories with finite colimits, and functors that commute with finite colimits (right exact functors). The space of right exact functors between $\cC,\cD\in \Cat_1^\rex$ is denoted $\Map^\rex(\cC,\cD)$.
  \item We refer to $(\infty,2)$-categories as 2-categories.
  We usually denote 2-categories by $\cU$, $\cV$ or $\cW$. 
  \item $\Cat_2$ is the large 1-category of small 2-categories. 
  The space of functors between $\cU,\cV\in \Cat_2$ is denoted $\Map(\cU,\cV)$ and the functor 2-category is denoted $\Fun(\cU,\cV)$.
  \item Given a 2-category $\cU$ and a morphism $f\colon X\to Y\in \cU$, we denote by $f^R$ its right adjoint if it exists. 
  The unit and counit of the adjunction are denoted $u_f\colon \id_X\Rightarrow f^Rf$ and $c_f\colon ff^R\Rightarrow \id_Y$ respectively.
  \item For a 1- (or 2-) category $\cC$, its \emph{core} is the maximal sub-groupoid $\cC^\simeq\subseteq \cC$. 
  For a 2-category $\cU$, its \emph{1-core} is the maximal sub-1-category $\cU^{\simeq_1}\subseteq \cU$.
  \item $\Catsm_n:= \CMon(\Cat_n)$ (for $n=1,2$) is the large category of small symmetric monoidal $n$-categories, which is formally given by commutative monoids in $\Cat_n$ with respect to the cartesian structure.
  The space of symmetric monoidal functors between $\cC,\cD\in \Cat_n^\otimes$ is denoted $\Mapo(\cC,\cD)$ and the symmetric monoidal functor (2-)category is denoted $\Funo(\cC,\cD)$.
  \item For $\cC\in \Catsm_n$, we denote the tensor product of $\cC$ by $\otimes$ and the unit of $\cC$ by $\one$.
  \item For $\cB$ a presentably symmetric monoidal category, $\Prl_\cB:=\Mod_\cB(\Prl)$ is the large symmetric monoidal 2-category of $\cB$-linear presentable categories. Sometimes in examples we will consider $\Prl_\cB$ as an element of $\Catsm_2$; to do that we implicitly move to a larger universe.
\end{itemize}

\subsection{Acknowledgments}
I would like to thank my advisor, Lior Yanovski, for his invaluable guidance and insights.
I would like to thank Achim Krause and Thomas Nikolaus for useful discussions.
I am also grateful to Shay Ben-Moshe, Dorin Boger, Shai Keidar and Shaul Ragimov for helpful conversations and suggestions.
Special thanks to Shay Ben-Moshe, Shai Keidar, Beckham Myers and Lior Yanovski for reviewing an earlier draft of this paper. 
Finally, I thank the anonymous referees for their careful reading and helpful comments. 

\section{The projection formula}\label{s:projection-formula}

Recall from \cite{ramzi2024locallyrigidinftycategories} that a $\cB$-linear presentably monoidal category $\cA\in \Alg(\Prl_\cB)$ is called \emph{rigid} if:
\begin{enumerate}
  \item the unit map $\eta \colon \cB\to \cA$ is internally left adjoint in $\Prl_\cB$, and
  \item the multiplication map $\mu \colon \cA\otimes_\cB \cA\to A$ is left adjoint as a map of $\cA$-bimodules.\label{i:rigid-prl-multiplication}
\end{enumerate} 
Condition (\ref{i:rigid-prl-multiplication}) can be checked using the \emph{projection formula}, which in this context appears in \cite{benmoshe2024naturalityofyoneda}.
For clarity, we will work in a more general setting: let $\cM,\cN\in  \Bmod[\cA]{\cA}(\Prl_B)$ and let $f\colon \cM\to \cN$ be a map of $\cA$-bimodules.
Denote by $\beta_\cM\colon \cA\otimes_\cB \cM\otimes_\cB \cA\to \cM$ the bi-action on $\cM$ (and similarly for $\cN$).
If $f$ has a right adjoint in  $\Prl_\cB$, then $f^R$ acquires the structure of a lax linear map of $\cA$-bimodules by the following composition: 
$$\beta_\cM(\id_\cA\otimes_\cB f^R\otimes_\cB \id_\cA)\xRightarrow{u_f}f^Rf\beta_\cM (\id_\cA\otimes_\cB f^R\otimes_\cB \id_\cA)\xRightarrow{\sim} f^R\beta_\cN (\id_\cA\otimes_\cB ff^R\otimes_\cB \id_\cA)\xRightarrow{c_f}f^R\beta_\cN.$$
The adjunction $f\dashv f^R$ is in $\Bmod[\cA]{\cA}(\Prl_\cB)$ if and only if the above composition is an isomorphism; in this case, we say that the projection formula holds.

To generalize the study of rigid algebras from $\Prl_\cB$ to an arbitrary symmetric monoidal 2-category $\cU$, we first generalize the projection formula to the setting of $\cU$.
In this context, bimodules are defined as symmetric monoidal functors from the envelope of the bimodule operad.
Accordingly, morphisms of bimodules are given by symmetric monoidal natural transformations. To understand adjunctions between such morphisms, we will adapt the methods developed by Abell\'an, Gagna, and Haugseng in \cite{abellan2024adjunctions}, which characterize adjunctions between natural transformations, to the symmetric monoidal setting.

\subsection{Lax natural transformations}

This subsection contains an overview of results from \cite{abellan2024adjunctions}.
Given 2-categories $\cU,\cV\in \Cat_2$ and functors $F,G\colon \cU\to \cV$, a \emph{lax natural transformation} $\alpha\colon F\Rightarrow G$ is given informally by maps $\alpha_X\colon FX\to GX$ for every $X\in \cU$ and lax commuting squares
\[\begin{tikzcd}
	FX & GX \\
	FY & GY
	\arrow["{\alpha_X}", from=1-1, to=1-2]
	\arrow["Ff"', from=1-1, to=2-1]
	\arrow["{\alpha_f}"', shorten <=6pt, shorten >=6pt, Rightarrow, from=1-2, to=2-1]
	\arrow["Gf", from=1-2, to=2-2]
	\arrow["{\alpha_Y}"', from=2-1, to=2-2]
\end{tikzcd}\]
for every $f\colon X\to Y$ (together with coherences).
The dual notion of an oplax natural transformation has the 2-morphism $\alpha_f$ going in the opposite direction.
Formally, both notions are defined via the \emph{Gray product} $\xlax$, a lax variant of the cartesian product on $\Cat_2$.
Specifically, the Gray product of two arrows, $\Delta^1\xlax \Delta^1$, yields the walking lax commuting square:
\[\begin{tikzcd}
	\bullet & \bullet \\
	\bullet & \bullet.
	\arrow[from=1-1, to=1-2]
	\arrow[from=1-1, to=2-1]
	\arrow[shorten <=8pt, shorten >=8pt, Rightarrow, from=1-2, to=2-1]
	\arrow[from=1-2, to=2-2]
	\arrow[from=2-1, to=2-2]
\end{tikzcd}\]
The Gray product was first introduced by Gray \cite{gray1974adjointness} in the context of (2,2)-categories, and various extensions to $(\infty,2)$-categories have since been proposed; see \cite{campion2023modelindependentgraytensorproduct} for a comparison of these constructions.
In this paper, we follow the approach of Gagna, Harpaz and Lanari \cite{gagna2021graytensorproducts}, which is further developed in \cite{haugseng2023twovariablefibrationsfactorisationsystems,abellan2024adjunctions}.
We will also reference the construction described in \cite{johnsonfreyd2017laxnatural}, as used in \cite{hoyois2017highertracesnoncommutativemotives}.

\begin{definition}
  Let $\cU,\cV\in\Cat_2$. 
  The 2-categories of functors with lax natural transformations $\Funlax(\cU,\cV)$ and oplax natural transformations $\Funoplax(\cU,\cV)$ are defined as the right adjoints
  \begin{align*}
    \Map(\cW,\Funlax(\cU,\cV))&\simeq \Map(\cW\xlax\cU,\cV)\\
    \Map(\cW,\Funoplax(\cU,\cV))&\simeq \Map(\cU\xlax\cW,\cV)
  \end{align*}
  for every $\cW\in \Cat_2$.
\end{definition}

To illustrate the data in $\Funlax(\cU,\cV)$, we will examine the lax arrow category $\Funlax(\Delta^1,\cV)$.

\begin{example}\label{e:lax-arrow-category}
  The low dimensional cells in $\Funlax(\Delta^1,\cV)$ are as follows:
  \begin{itemize}
    \item The objects are arrows $f\colon X\to Y\in \cV$.
    \item The 1-morphisms are lax squares
    \[\begin{tikzcd}
      X & X' \\
      Y & Y'.
      \arrow["", from=1-1, to=1-2]
      \arrow["f"', from=1-1, to=2-1]
      \arrow[shorten <=6pt, shorten >=6pt, Rightarrow, from=1-2, to=2-1]
      \arrow["f'", from=1-2, to=2-2]
      \arrow[from=2-1, to=2-2]
    \end{tikzcd}\]
    \item The 2-morpshims are commuting diagrams
      \[\begin{tikzcd}
        &&& {X'} \\
        X && {} \\
        & {} && {Y'.} \\
        Y
        \arrow["{{f'}}", from=1-4, to=3-4]
        \arrow[""{name=0, anchor=center, inner sep=0}, curve={height=-18pt}, from=2-1, to=1-4]
        \arrow[""{name=1, anchor=center, inner sep=0}, curve={height=18pt}, from=2-1, to=1-4]
        \arrow["f"', from=2-1, to=4-1]
        \arrow[shift right=2, curve={height=5pt}, shorten <=9pt, shorten >=5pt, Rightarrow, from=2-3, to=3-2]
        \arrow[shift left=2, curve={height=-5pt}, shorten <=5pt, shorten >=9pt, Rightarrow, from=2-3, to=3-2]
        \arrow[""{name=2, anchor=center, inner sep=0}, curve={height=-18pt}, from=3-4, to=4-1]
        \arrow[""{name=3, anchor=center, inner sep=0}, curve={height=18pt}, from=3-4, to=4-1]
        \arrow[shorten <=5pt, shorten >=5pt, Rightarrow, from=1, to=0]
        \arrow[shorten <=5pt, shorten >=5pt, Rightarrow, from=2, to=3]
      \end{tikzcd}\]
  \end{itemize}
\end{example}

Consider also the 2-category $\Fun(\cU,\cV)$ of functors with strong natural transformations.
There exist natural functors
$$\Funoplax(\cU,\cV)\leftarrow\Fun(\cU,\cV)\to \Funlax(\cU,\cV)$$
given by considering a strong natural transformation as (op)lax.
These functors are \emph{locally full inclusions} \cite[Corollary 2.8.12]{abellan2024adjunctions}, in the sense of the following definition:
\begin{definition}
  A functor of 2-categories $F\colon\cU\to \cV$ is called:
  \begin{enumerate}
    \item \emph{locally fully faithful} if $\hom_\cU(X,Y)\to \hom_\cV(FX,FY)$ is fully faithful for all $X,Y\in \cU$, and
    \item \emph{a locally full inclusion} if it is locally fully faithful and $\cU\core\to \cV\core$ is a monomorphism of spaces.
  \end{enumerate}
\end{definition}

According to \cite[Corollary 2.6.7]{abellan2024adjunctions}, $F\colon \cU\to \cV$ is a locally full inclusion if and only if it is right orthogonal to the maps $\pt\sqcup \pt\to \pt$ and $\partial C_2\to C_2$, with the last being the boundary inclusion of the walking 2-morphism.
In particular, a locally full inclusion is a monomorphism in $\Cat_2$ \cite[Proposition 2.6.10]{abellan2024adjunctions}. 

\begin{remark}
  The locally full inclusion $\Fun(\cU,\cV)\to \Funbilax(\cU,\cV)$ is also wide, which means that $\Fun(\cU,\cV)\core\to \Funbilax(\cU,\cV)\core$ is an isomorphism. 
  In fact, both sides are isomorphic to the mapping space $\Map(\cU,\cV)$.
\end{remark}

Let $F,G\colon \cU\to \cV$ be functors of 2-categories, and suppose that $\alpha\colon F\Rightarrow G$ is a strong natural transformation such that for each $X\in \cU$, $\alpha_X\colon FX\to GX$ has a right adjoint $\alpha_X^R\colon GX\to FX$.
Denote the unit and counit of the adjunction by $u_X\colon \id_{FX}\Rightarrow \alpha_X^R\alpha_X$ and $c_X\colon \alpha_X\alpha_X^R\Rightarrow \id_{GX}$.
Then for every $f\colon X\to Y$, the commuting naturality square
\[\begin{tikzcd}
	FX & GX \\
	FY & GY
	\arrow["{{\alpha_X}}", from=1-1, to=1-2]
	\arrow["Ff"', from=1-1, to=2-1]
	\arrow["Gf", from=1-2, to=2-2]
	\arrow["{{\alpha_Y}}"', from=2-1, to=2-2]
\end{tikzcd}\]
induces a lax commuting square
\[\begin{tikzcd}
	GX & FX \\
	GY & FY
	\arrow["{{\alpha_X^R}}", from=1-1, to=1-2]
	\arrow["Gf"', from=1-1, to=2-1]
	\arrow[shorten <=6pt, shorten >=6pt, Rightarrow, from=1-2, to=2-1]
	\arrow["Ff", from=1-2, to=2-2]
	\arrow["{{\alpha_Y^R}}"', from=2-1, to=2-2]
\end{tikzcd}\]
where the 2-morphism is the \emph{Beck-Chevalley} 2-morphism
$$Ff\alpha_X^R\xRightarrow{u_Y}
\alpha_Y^R\alpha_YFf\alpha_X^R\xRightarrow{\sim} 
\alpha_Y^RGf\alpha_X\alpha_X^R\xRightarrow{c_X}
\alpha_Y^RGf.$$
The following proposition implies that $\alpha_X^R$ assembles into a lax natural transformation $\alpha^R\colon G\Rightarrow F$, which is right adjoint to $\alpha$ in $\Funlax(\cU,\cV)$:

\begin{proposition}[{\cite[Corollary 5.2.10]{abellan2024adjunctions}}]\label{p:adj-lax-nat}
  Let $F,G\colon \cU\to \cV$ be functors of 2-categories.
  A lax natural transformation $\alpha\colon F\Rightarrow G$ has a right adjoint in $\Funlax(\cU,\cV)$ if and only if it is a strong natural transformation and $\alpha_X\colon FX\to GX$ has a right adjoint for every $X\in \cU$.
\end{proposition}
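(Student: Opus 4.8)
The plan is to prove the two implications separately, reducing each to pointwise data by means of evaluation functors. For $X\in\cU$, viewing $X$ as a functor $\Delta^0\to\cU$ and evaluating the counit $\Funlax(\cU,\cV)\xlax\cU\to\cV$ of the defining adjunction at $X$ (after the identification $\Funlax(\cU,\cV)\xlax\Delta^0\simeq\Funlax(\cU,\cV)$) produces a genuine $2$-functor $\mathrm{ev}_X\colon\Funlax(\cU,\cV)\to\cV$; similarly each arrow $f\colon X\to Y$ in $\cU$, viewed as a functor $\Delta^1\to\cU$, yields a $2$-functor $\mathrm{ev}_f\colon\Funlax(\cU,\cV)\to\Funlax(\Delta^1,\cV)$ into the lax arrow category of \cref{e:lax-arrow-category}. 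Since an adjunction in a $2$-category is exactly a $2$-functor out of the free-living adjunction $\mathbf{Adj}$, every $2$-functor — in particular each $\mathrm{ev}_X$ and $\mathrm{ev}_f$ — carries a right adjoint of $\alpha$ to a right adjoint of its image. The conceptual core, common to both directions, is the following fact about mates: if $\alpha\colon F\Rightarrow G$ is lax and each component $\alpha_X$ admits a right adjoint $\alpha_X^R$, then the Beck--Chevalley $2$-morphisms recalled before the proposition assemble $\alpha_X^R$ into an \emph{oplax} natural transformation $G\Rightarrow F$ (mates of lax squares carry the reversed variance), and this candidate is a strong — equivalently, a lax — natural transformation precisely when every $\alpha_f$ is invertible, i.e.\ when $\alpha$ is strong. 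So the only possible right adjoint of $\alpha$ can lie in $\Funlax(\cU,\cV)$ only if $\alpha$ is strong.

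For the ``only if'' implication, suppose $\alpha$ has a right adjoint $\beta\colon G\Rightarrow F$ in $\Funlax(\cU,\cV)$, with unit $\eta$ and counit $\epsilon$. Applying $\mathrm{ev}_X$ gives $\alpha_X\dashv\beta_X$ in $\cV$, so the pointwise right adjoints exist, with $\beta_X\simeq\alpha_X^R$ and $\eta_X\simeq u_X$, $\epsilon_X\simeq c_X$. Next I would show that the structure $2$-morphism $\beta_f$ is the Beck--Chevalley mate of $\alpha_f$: this is forced by the identities expressing that $\eta$ and $\epsilon$ are $2$-morphisms of $\Funlax(\cU,\cV)$, together with the triangle identities, and can be made concrete by applying $\mathrm{ev}_f$ and working inside the lax arrow category. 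Since $\beta_f$ is by hypothesis a lax structure $2$-morphism for $G\Rightarrow F$, whereas the mate of $\alpha_f$ has the opposite variance, the two can agree only when $\alpha_f$ is invertible; hence $\alpha$ is strong.

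For the ``if'' implication, assume $\alpha$ is strong with pointwise right adjoints. By the core fact, the Beck--Chevalley $2$-morphisms now assemble $\alpha_X^R$ into a genuinely \emph{lax} natural transformation $\alpha^R\colon G\Rightarrow F$ — this is the content of the discussion preceding the proposition, and the required coherence (that these mates satisfy the relations of a lax natural transformation) is obtained by transporting the coherence of $\alpha$ through the pointwise adjunctions. One then takes the pointwise units $u_X$ and counits $c_X$, verifies that they assemble into modifications $\id_F\Rightarrow\alpha^R\alpha$ and $\alpha\alpha^R\Rightarrow\id_G$ — this is exactly where invertibility of the $\alpha_f$ is used, to make them compatible with the structure $2$-morphisms — and checks the triangle identities, which reduce to the pointwise triangle identities of $\alpha_X\dashv\alpha_X^R$ and therefore hold. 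This exhibits $\alpha\dashv\alpha^R$ in $\Funlax(\cU,\cV)$.

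The main obstacle is not any single computation but the coherence bookkeeping: in the $(\infty,2)$-categorical setting, $\Funlax(\cU,\cV)$ is accessible only through the universal property of the Gray product, so the component-level manipulations above cannot literally be carried out, and in particular the claim that $\beta_f$ is the mate of $\alpha_f$ (and that the $u_X,c_X$ are modifications) must be proved without reference to components. A clean way to organize this is to first dispatch the case $\cU=\Delta^1$, characterizing left-adjointable lax squares directly from the combinatorics of $\Delta^1\xlax\Delta^1$ and of $\mathbf{Adj}$, and then to recover the general statement by rewriting an adjunction in $\Funlax(\cU,\cV)$ as a $2$-functor $\mathbf{Adj}\xlax\cU\to\cV$ and determining, from the structure of this Gray product, precisely which lax natural transformations extend along the inclusion of $\Delta^1$ (picking out the left adjoint) into $\mathbf{Adj}$; this is the route underlying \cite{abellan2024adjunctions}.
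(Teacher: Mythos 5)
This statement is not proved in the paper at all: it is imported verbatim from \cite[Corollary 5.2.10]{abellan2024adjunctions}, so the only fair comparison is with the argument in that reference, which your sketch gestures at but does not reproduce. Your $(2,2)$-categorical skeleton is the right one, and parts of it are genuinely fine in the $\infty$-setting: restriction along $\Delta^0\to\cU$ and $\Delta^1\to\cU$ does give 2-functors $\Funlax(\cU,\cV)\to\cV$ and $\Funlax(\cU,\cV)\to\Funlax(\Delta^1,\cV)$, 2-functors preserve adjunctions, and your variance analysis (mates of lax squares point in the oplax direction, so a right adjoint in $\Funlax$ can exist only if each $\alpha_f$ is invertible) matches the conventions used here.

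The genuine gap is that the two steps carrying all of the content are asserted rather than proved, as you yourself concede. In the ``if'' direction, ``the Beck--Chevalley 2-morphisms assemble $\alpha_X^R$ into a lax natural transformation'' and ``the coherence is obtained by transporting the coherence of $\alpha$ through the pointwise adjunctions'' are not arguments in the $(\infty,2)$-setting: a lax transformation is a functor out of a Gray product, it cannot be specified by components and naturality squares, and the triangle identities for the would-be unit and counit modifications are coherent data, not conditions one ``checks pointwise.'' In the ``only if'' direction, the claim that $\beta_f$ is forced to be (inverse-)mate to $\alpha_f$ again needs the full modification axioms, which you defer to ``working inside the lax arrow category,'' i.e.\ to the $\cU=\Delta^1$ case — which is itself an instance of the proposition and is not established. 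Your proposed repair (encode an adjunction in $\Funlax(\cU,\cV)$ as a functor $\Adj\xlax\cU\to\cV$ and study the extension problem along $\Delta^1\xlax\cU\to\Adj\xlax\cU$) is indeed the correct shape of the real proof, and is close in spirit to how the paper later uses $\Fun^\oplax(\Adj,\cV)$ in \cref{l:adj-is-ff}; but the combinatorial analysis of that Gray product is precisely the technical substance of \cite{abellan2024adjunctions}, and it is absent here. As written, the proposal is a plausibility sketch plus a pointer to the reference, not a proof.
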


This has the following corollary for adjunctions of strong natural transformation:

\begin{corollary}[{\cite[Corollary 5.2.12]{abellan2024adjunctions}}]\label{c:adj-nat}
  A strong natural transformation $\alpha\colon F\Rightarrow G$ has a right adjoint in $\Fun(\cU,\cV)$ if and only if $\alpha_X\colon FX\to GX$ has a right adjoint for every $X\in \cU$ and for every $f\colon X\to Y$ the corresponding Beck-Chevalley 2-morphism is invertible.
\end{corollary}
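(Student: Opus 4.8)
The plan is to deduce \cref{c:adj-nat} directly from \cref{p:adj-lax-nat} by transporting the adjunction along the locally full inclusion $\iota\colon \Fun(\cU,\cV)\to \Funlax(\cU,\cV)$. First I would observe that if $\alpha\colon F\Rightarrow G$ is a strong natural transformation with a right adjoint $\beta$ in $\Fun(\cU,\cV)$, then $\iota$ being a functor sends the adjunction $\alpha\dashv\beta$ to an adjunction $\iota\alpha\dashv\iota\beta$ in $\Funlax(\cU,\cV)$; by \cref{p:adj-lax-nat} this forces each $\alpha_X$ to have a right adjoint. Conversely, suppose each $\alpha_X$ has a right adjoint $\alpha_X^R$ and every Beck–Chevalley 2-morphism is invertible. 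By \cref{p:adj-lax-nat}, $\alpha$ has a right adjoint $\alpha^R$ in $\Funlax(\cU,\cV)$, whose components are the $\alpha_X^R$ and whose structure 2-morphisms $\alpha^R_f$ are exactly the Beck–Chevalley 2-morphisms described before the proposition. The invertibility hypothesis says precisely that $\alpha^R$ lies in the image of the wide locally full inclusion $\Fun(\cU,\cV)\to \Funlax(\cU,\cV)$, i.e., $\alpha^R$ is (the image of) a strong natural transformation $\beta\colon G\Rightarrow F$.

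The remaining point is to promote the adjunction $\alpha\dashv\alpha^R$ from $\Funlax(\cU,\cV)$ to $\Fun(\cU,\cV)$. For this I would use that $\iota$ is a monomorphism in $\Cat_2$ (stated in the excerpt, via \cite[Proposition 2.6.10]{abellan2024adjunctions}), and in particular locally fully faithful: the unit $u\colon \id_F\Rightarrow \alpha^R\alpha$ and counit $c\colon \alpha\alpha^R\Rightarrow \id_G$, together with the triangle identities, are 2-morphisms and equations between 2-morphisms in $\Funlax(\cU,\cV)$ with all 0- and 1-cells involved coming from $\Fun(\cU,\cV)$; since $\hom_{\Fun(\cU,\cV)}(F,G)\to \hom_{\Funlax(\cU,\cV)}(F,G)$ is fully faithful, these data and identities lift uniquely to $\Fun(\cU,\cV)$, exhibiting $\alpha\dashv\beta$ there. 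Alternatively, and perhaps more cleanly, one can invoke the general principle that a locally full inclusion creates adjunctions among morphisms in its image: given $\alpha,\beta$ in $\Fun(\cU,\cV)$ whose images are adjoint in $\Funlax(\cU,\cV)$, the adjunction witnesses lift back because forming the right adjoint is a property-like structure detected on hom-categories.

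I would then package the statement as an "if and only if": the forward direction is the functoriality argument of the first paragraph (every right-adjointable strong transformation is componentwise right-adjointable, and its lax right adjoint, being strong, has invertible $\alpha^R_f$, which are the Beck–Chevalley maps), and the reverse direction is the lifting argument just sketched. A small bookkeeping step is to check that the structure 2-morphisms of the lax right adjoint produced by \cref{p:adj-lax-nat} are genuinely the Beck–Chevalley 2-morphisms as written in the display preceding the proposition — this is essentially the content of how \cite[Corollary 5.2.10]{abellan2024adjunctions} is proved, so I would cite it rather than recompute.

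The main obstacle I anticipate is purely formal rather than computational: making precise the claim that a wide locally full inclusion "reflects" adjunctions, i.e., that an adjunction in $\Funlax(\cU,\cV)$ between objects and 1-morphisms all lying in $\Fun(\cU,\cV)$ is automatically the image of an adjunction in $\Fun(\cU,\cV)$. This needs the combination of local full faithfulness (to lift the unit, counit, and triangle identities) with wideness (to know $\alpha^R$ itself, not just $\alpha$, lands in $\Fun(\cU,\cV)$ once its Beck–Chevalley components are invertible). Both ingredients are available from the cited corollaries in \cite{abellan2024adjunctions}, so the argument should go through without friction once these are assembled in the right order.
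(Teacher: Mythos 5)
Your proposal is correct, and it is essentially the approach the paper takes: the paper does not spell out a proof but cites \cite[Corollary 5.2.12]{abellan2024adjunctions}, and its intended deduction is exactly yours — pass through \cref{p:adj-lax-nat} and transport the adjunction along the wide locally full inclusion $\Fun(\cU,\cV)\to\Funlax(\cU,\cV)$, just as the paper does explicitly for the symmetric monoidal analogue \cref{c:sm-adj-nat} via \cref{l:funo-is-locally-full}. The only points needing outside input (that the lax right adjoint's structure 2-morphisms are the Beck--Chevalley maps, and that a lax transformation with invertible structure 2-morphisms is strong) are precisely the facts from \cite{abellan2024adjunctions} you already propose to cite.
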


For our purposes, it would be helpful to give a different formulation of \cref{p:adj-lax-nat}.
Denote by $\Adj\in \Cat_2$ the walking adjunction, which is the free pair of adjoint morphisms, with $l\colon \Delta^1\to \Adj$ picking the left adjoint.
Given a 2-category $\cV\in \Cat_2$, precomposition with $l$ defines a functor 
$$l^*\colon \Fun^\oplax(\Adj,\cV)\to \Fun^\oplax(\Delta^1,\cV).$$

\begin{lemma}[{\cite[Theorem 1.1]{Haugseng2021LaxTransformations}}]\label{l:adj-is-ff}
  For $\cV\in \Cat_2$, precomposition with $l$ lifts to a fully faithful functor:
  \[\begin{tikzcd}
    & {\Fun(\Delta^1,\cV)} \\
    {\Fun^\oplax(\Adj,\cV)} & {\Fun^\oplax(\Delta^1,\cV)}.
    \arrow[from=1-2, to=2-2]
    \arrow[dashed, hook, from=2-1, to=1-2]
    \arrow["{l^*}", from=2-1, to=2-2]
  \end{tikzcd}\]
\end{lemma}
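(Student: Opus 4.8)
The plan is to identify the image of $l^*$ with the strong arrows and then upgrade the statement about objects to a statement about mapping categories. First I would recall the content of \cref{p:adj-lax-nat} (or rather its oplax dual): a functor $\Delta^1 \to \cV$, viewed as an object of $\Fun^\oplax(\Delta^1,\cV)$, extends along $l\colon \Delta^1 \to \Adj$ to a functor $\Adj \to \cV$ if and only if the corresponding morphism of $\cV$ admits a right adjoint, and moreover such an extension is essentially unique — this is the usual uniqueness of adjoints, which here takes the form that $l^*$ is fully faithful onto its image when restricted appropriately. Concretely, the square in the statement commutes because a strong natural transformation $\Delta^1 \to \cV$ is the same as a functor out of $\Delta^1$ (there is nothing lax about a single arrow), and the claim is that $l^* $ factors through $\Fun(\Delta^1,\cV) \hookrightarrow \Fun^\oplax(\Delta^1,\cV)$ via a fully faithful functor.

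The key steps, in order: (1) Observe that $\Fun(\Delta^1,\cV) \to \Fun^\oplax(\Delta^1,\cV)$ is a locally full inclusion (indeed, for the walking arrow there are no nontrivial (op)lax naturality squares to add, so this inclusion is even an equivalence onto the full subcategory on all objects — one should check whether it is an isomorphism or merely fully faithful, but either way it is a monomorphism). (2) Show that $l^*\colon \Fun^\oplax(\Adj,\cV) \to \Fun^\oplax(\Delta^1,\cV)$ lands in the essential image of this inclusion: an object of $\Fun^\oplax(\Adj,\cV)$ is a pair of adjoint morphisms together with unit and counit, and restricting along $l$ remembers only the left adjoint as a strong arrow. (3) Prove that the induced functor $\Fun^\oplax(\Adj,\cV) \to \Fun(\Delta^1,\cV)$ is fully faithful. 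For this I would compute the mapping categories on both sides: a morphism in $\Fun^\oplax(\Adj,\cV)$ between two adjunctions is an oplax square of the $\Adj$-shaped diagrams, and I claim the data of such a square is freely determined by its restriction to the left adjoints, because the component on the right adjoint and the coherence with units/counits are uniquely reconstructed by adjunction (mate correspondence). This mate calculus — that an oplax square between the left-adjoint arrows corresponds bijectively, including at the level of 2-morphisms, to an oplax square of the full adjunction data — is the technical heart.

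The main obstacle I anticipate is step (3): making the mate correspondence precise at the level of $(\infty,2)$-categorical mapping categories rather than just on equivalence classes of objects. In the $1$-categorical or $(2,2)$-categorical setting this is classical (the "doctrinal adjunction" / mates bijection), but here one needs it as an equivalence of hom-categories $\hom_{\Fun^\oplax(\Adj,\cV)}(F,G) \simeq \hom_{\Fun(\Delta^1,\cV)}(l^*F, l^*G)$, naturally in $F$ and $G$. I would try to deduce this formally from \cref{p:adj-lax-nat} applied with $\cU = \Adj$ and with $\cU$ a suitable "cylinder" $\Adj \xlax \Delta^1$ or its oplax analogue, so that the mapping categories themselves get identified as spaces/categories of (op)lax natural transformations out of a Gray product; the fully faithfulness then becomes an instance of the adjunction characterization rather than something proved by hand. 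Alternatively, if a cleaner route is available, one can cite the universal property of $\Adj$ as the free adjunction directly: $\Fun^\oplax(\Adj,\cV)$ is the $(\infty,2)$-category of adjunctions in $\cV$, and the forgetful functor to $\Fun(\Delta^1,\cV)$ picking out the left adjoint is fully faithful precisely because adjoints, together with all their coherence data, are unique up to contractible choice — a statement that should already be packaged in the references, in which case the proof reduces to citing it and checking the square commutes.
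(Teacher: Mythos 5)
Your fallback route --- reducing everything formally to \cref{p:adj-lax-nat} by testing against Gray products --- is essentially the paper's actual proof: one applies $\Map(\cU,-)$ for arbitrary $\cU\in\Cat_2$, uses the defining adjunctions to rewrite the three corners as $\Map(\Adj,\Funlax(\cU,\cV))$, $\Map(\Delta^1,\Funlax(\cU,\cV))$ and $\Map(\Delta^1,\Fun(\cU,\cV))$, and the lift then exists because left adjoint lax natural transformations are strong (\cref{p:adj-lax-nat}); uniqueness of the lift comes from $\Fun(\Delta^1,\cV)\to\Fun^\oplax(\Delta^1,\cV)$ being a monomorphism, and fully faithfulness from the fact that, among strong natural transformations, being a left adjoint in $\Funlax(\cU,\cV)$ is an object-wise condition. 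So the overall strategy is viable.

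As written, though, your primary argument has a genuine error and leaves its central step unproved. The claim in step (1) that $\Fun(\Delta^1,\cV)\to\Fun^\oplax(\Delta^1,\cV)$ is an equivalence onto a full subcategory because ``there are no nontrivial (op)lax naturality squares to add'' is false: the objects agree, but the 1-morphisms of the (op)lax arrow category are lax-commuting squares (see \cref{e:lax-arrow-category}), of which the strong squares form a non-full, merely locally full and wide, subcategory. This matters in two places. First, to lift a functor through a non-full inclusion you must check morphisms and 2-morphisms, not just objects as in your step (2): you need that every morphism of $\Fun^\oplax(\Adj,\cV)$ restricts along $l$ to a \emph{strong} square. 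Second, the mate claim in step (3) --- that oplax squares between the left adjoints correspond bijectively to oplax squares of the full adjunction data --- is incorrect as stated: compatibility with the unit and counit of $\Adj$ forces the mate of the right-adjoint component to be a two-sided inverse of the left-adjoint component, so only \emph{invertible} squares between the left adjoints extend, and those essentially uniquely. That corrected statement (identifying $\hom_{\Fun^\oplax(\Adj,\cV)}(F,G)$ with strong squares from $l^*F$ to $l^*G$) is exactly what you need, but you acknowledge you cannot yet make this mate calculus precise at the level of $(\infty,2)$-categorical hom-categories, and neither the formal reduction nor a citation is actually carried out; at that point the proposal collapses onto the Yoneda-style argument above, which is where the proof really lives.
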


\begin{proof}
  While \cite{Haugseng2021LaxTransformations} gives an independent proof, we will also explain how this follows from \cref{p:adj-lax-nat}.
  Note that, as $\Fun(\Delta^1,\cV)\to \Fun^\oplax(\Delta^1,\cV)$ is a monomorphism, if such a lift exists then it is unique.
  We prove that the lift exists using Yoneda's Lemma.
  Let $\cU\in \Cat_2$, and apply $\Map(\cU,-)$ to the above diagram. 
  After swapping the order with adjunctions, we get the following lifting problem:
  \[\begin{tikzcd}
    & {\Map(\Delta^1,\Fun(\cU,\cV))} \\
    {\Map(\Adj,\Funlax(\cU,\cV))} & {\Map(\Delta^1,\Funlax(\cU,\cV))}
    \arrow[from=1-2, to=2-2]
    \arrow[dashed, from=2-1, to=1-2]
    \arrow["{{l^*}}", from=2-1, to=2-2]
  \end{tikzcd}\]
   where a lift exists from \cref{p:adj-lax-nat} because left adjoint lax natural transformations are strong. 
   The fact that the lift is fully faithful then follows from the fact that, among the strong natural transformations, being left adjoint in $\Funlax(\cU,\cV)$ is an object-wise property.
\end{proof}

\subsection{Symmetric monoidal lax natural transformations}

In this subsection we prove a symmetric monoidal version of \cref{p:adj-lax-nat}.
Notice that for $\cW\in \Cat_2$, the functor $\Fun(\cW,-)$ commutes with products, and hence if $\cV\in \Catsm_2$ is symmetric monoidal then $\Fun(\cW,\cV)$ inherits a (pointwise) symmetric monoidal structure.
Assume also that $\cU\in \Catsm_2$.
Using the above structure, the 2-category of symmetric monoidal functors $\Funo(\cU,\cV)$ can be characterized by the universal property
$$\Map(\cW,\Funo(\cU,\cV))\simeq \Mapo(\cU,\Fun(\cW,\cV)).$$
Similarly, the functor $\Funbilax(\cW,-)$ commutes with products, and hence $\Funbilax(\cW,\cV)$ inherits a symmetric monoidal structure.
Following this observation, \cite[Definition 2.17]{Carmeli2022CharactersAT} gave the following definition:

\begin{definition}
  Let $\cU,\cV\in \Catsm_2$, the 2-categories $\Funolax(\cU,\cV)$ and $\Funooplax(\cU,\cV)$ are defined by the universal properties that for every $\cW\in \Cat_2$ 
  $$\Map(\cW,\Funolax(\cU,\cV))\simeq \Mapo(\cU,\Funoplax(\cW,\cV)),$$
  $$\Map(\cW,\Funooplax(\cU,\cV))\simeq \Mapo(\cU,\Funlax(\cW,\cV)).$$
The objects of $\Funobilax(\cU,\cV)$ are strong symmetric monoidal functors and the morphisms are (op)lax natural transformation which are strong symmetric monoidal.
\end{definition}

\begin{warning-thm}
  Despite the confusing notation, the objects of $\Funolax(\cU,\cV)$ are \emph{not} lax symmetric monoidal functors.
\end{warning-thm}

The above universal properties can be made internal, as isomorphisms of functor 2-categories instead of mapping spaces, by using Yoneda's lemma.

\begin{lemma}\label{l:Funo-internal-universal-propertey}
  Let $\cU,\cV\in \Catsm_2$ and $\cW\in \Cat_2$. 
  There are isomorphisms
  $$\Fun(\cW,\Funo(\cU,\cV))\simeq \Funo(\cU,\Fun(\cW,\cV)),$$
  $$\Funoplax(\cW,\Funolax(\cU,\cV))\simeq \Funolax(\cU,\Funoplax(\cW,\cV)),$$
  $$\Funlax(\cW,\Funooplax(\cU,\cV))\simeq \Funooplax(\cU,\Funlax(\cW,\cV)).$$
\end{lemma}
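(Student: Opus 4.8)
The plan is to deduce all three isomorphisms from the defining universal properties of the functor $2$-categories together with Yoneda's lemma, exactly as the lemma's statement hints (``by using Yoneda's lemma''). The key point is that each of these is an isomorphism in $\Cat_2$, so by the Yoneda embedding $\Cat_2\hookrightarrow \Fun(\Cat_2^\op,\widehat{\mathcal S})$ it suffices to produce, naturally in an auxiliary $2$-category $\cX\in \Cat_2$, an isomorphism of mapping spaces $\Map(\cX,-)$ applied to the two sides. So the whole proof is a chain of natural isomorphisms of spaces.

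For the first isomorphism, I would compute, for $\cX\in \Cat_2$:
\[
  \Map\bigl(\cX,\Fun(\cW,\Funo(\cU,\cV))\bigr)\simeq \Map\bigl(\cX\times \cW,\Funo(\cU,\cV)\bigr)\simeq \Mapo\bigl(\cU,\Fun(\cX\times \cW,\cV)\bigr),
\]
using the cartesian adjunction $\Fun(\cW,-)\dashv (-\times \cW)$ (valid since $\Cat_2$ is cartesian closed) and then the defining universal property of $\Funo$. On the other hand,
\[
  \Map\bigl(\cX,\Funo(\cU,\Fun(\cW,\cV))\bigr)\simeq \Mapo\bigl(\cU,\Fun(\cX,\Fun(\cW,\cV))\bigr)\simeq \Mapo\bigl(\cU,\Fun(\cX\times \cW,\cV)\bigr),
\]
again by the universal property of $\Funo$ and cartesian closedness. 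These agree naturally in $\cX$, so Yoneda gives the claim. The second and third isomorphisms proceed in precisely the same way, but now replacing the cartesian product and internal hom with the Gray product $\xlax$ and the (op)lax functor categories: for the second one, using $\Funoplax(\cW,-)\dashv (-\xlax\cW)$ (and $\Funolax(\cU,-)\dashv (\cU\xlax -)$ appropriately) one computes both sides as $\Mapo(\cU,\Funoplax(\cX\xlax\cW,\cV))$, where the needed identification $\cX\xlax(\cU\xlax\cW)\simeq (\cX\xlax\cU)\xlax\cW$ is the associativity of the Gray product; for the third, one symmetrically obtains $\Mapo(\cU,\Funlax(\cW\xlax\cX,\cV))$, using that $\Funooplax$ is defined via $\Funlax$ and unwinding both adjunctions. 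The only subtlety worth flagging is bookkeeping the variance: $\Funoplax(\cW,-)$ is the right adjoint to $\cU\mapsto \cU\xlax\cW$ (Gray product in the \emph{second} slot), so in the second isomorphism one must be careful that the outer $\cX$ and the parameter $\cW$ sit on the correct sides of the iterated Gray product, and invoke associativity (and, where relevant, the compatibility of $\xlax$ with the symmetric monoidal structure on $\cU$, as in \cref{l:Funo-internal-universal-propertey}'s source \cite{Carmeli2022CharactersAT}) to collapse the composite.

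The main obstacle is therefore not conceptual but a matter of carefully threading the adjunctions and the associativity/interchange properties of the Gray product so that the two computations land on a syntactically identical expression; in particular one must make sure the symmetric monoidal variable $\cU$ is never fed into a Gray product on the side where only a plain $2$-category is allowed, which is exactly why the statement is phrased with $\cW\in \Cat_2$ (not $\Catsm_2$) and the mixed-variance universal properties of $\Funolax$ and $\Funooplax$. Once the associativity of $\xlax$ and the defining adjunctions are in hand, each line is a formal manipulation, and naturality in $\cX$ is automatic since every functor used (Gray product, (op)lax functor category, $\Mapo$) is itself functorial. I would close by remarking that the first isomorphism is the special case of the other two in which all natural transformations are taken to be strong, so in the write-up it suffices to give the Gray-product computation once and note the cartesian case is identical with $\times$ in place of $\xlax$.
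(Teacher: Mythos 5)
Your proposal is correct and takes essentially the same route as the paper's proof: test both sides against an arbitrary auxiliary $2$-category, unwind the Gray-product adjunctions defining $\Funlax$ and $\Funoplax$ together with the defining universal properties of $\Funo$, $\Funolax$ and $\Funooplax$, use associativity of the Gray product to reassociate, and conclude by Yoneda. The only discrepancy is the order of the factors in the iterated Gray product: with the paper's conventions the common expression in the third case is $\Mapo(\cU,\Funlax(\cT\xlax\cW,\cV))$ with the test category $\cT$ on the \emph{left}, not $\cW$ Gray-tensored on the left as written in your summary --- exactly the variance bookkeeping you flag, and it does not affect the argument.
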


\begin{proof}
  We will prove the last isomorphism, the rest are similar.
  For every $\cT\in \Cat_2$, there are the following isomorphisms:
  \begin{align*}
    \Map(\cT,\Funlax(\cW,\Funooplax(\cU,\cV)))
    &\simeq  \Map(\cT\xlax \cW,\Funooplax(\cU,\cV))\\
    &\simeq  \Mapo(\cU,\Funlax(\cT\xlax \cW,\cV))\\
    &\simeq \Mapo(\cU,\Funlax(\cT,\Funlax(\cW,\cV)))\\
    &\simeq  \Map(\cT,\Funooplax(\cU,\Funlax(\cW,\cV))).\\
  \end{align*}
  The third isomorphism $\Funlax(\cT\xlax \cW,\cV)\simeq \Funlax(\cT,\Funlax(\cW,\cV))$ follows from the associativity of the Gray product.
\end{proof}

By construction, the inclusion $\Fun(\cW,\cV)\to \Funbilax(\cW,\cV)$ is symmetric monoidal, so it induces a functor $\Funo(\cU,\cV)\to \Funobilax(\cU,\cV)$.

\begin{lemma}\label{l:funo-is-locally-full}
  For every $\cU,\cV\in \Catsm_2$, the functor $\Funo(\cU,\cV)\to \Funobilax(\cU,\cV)$ is a locally full inclusion.
\end{lemma}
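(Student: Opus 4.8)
The plan is to deduce this from the already-established fact (\cite[Corollary 2.8.12]{abellan2024adjunctions}, quoted just above \cref{l:adj-is-ff} in spirit) that the non-monoidal inclusion $\Fun(\cW,\cV)\to\Funbilax(\cW,\cV)$ is a locally full inclusion, together with the characterization of locally full inclusions as the maps right orthogonal to $\pt\sqcup\pt\to\pt$ and $\partial C_2\to C_2$ (\cite[Corollary 2.6.7]{abellan2024adjunctions}). The point is that $\Funo(\cU,\cV)\to\Funobilax(\cU,\cV)$ is, by the universal properties of these functor categories, ``the same map applied internally'', so its mapping-space and mapping-category behavior is controlled by that of the non-monoidal inclusion.

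First I would unwind what needs to be checked. By \cite[Corollary 2.6.7]{abellan2024adjunctions} it suffices to show that $\Funo(\cU,\cV)\to\Funobilax(\cU,\cV)$ is right orthogonal to $\pt\sqcup\pt\to\pt$ and to $\partial C_2\to C_2$; equivalently, writing $j\colon\cW\to\cW'$ for either of these two maps in $\Cat_2$, that the square
\[\begin{tikzcd}
  \Map(\cW',\Funo(\cU,\cV)) & \Map(\cW,\Funo(\cU,\cV)) \\
  \Map(\cW',\Funobilax(\cU,\cV)) & \Map(\cW,\Funobilax(\cU,\cV))
  \arrow["{j^*}", from=1-1, to=1-2]
  \arrow[from=1-1, to=2-1]
  \arrow[from=1-2, to=2-2]
  \arrow["{j^*}"', from=2-1, to=2-2]
\end{tikzcd}\]
is a pullback of spaces. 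Now I apply the defining universal properties: the left column is $\Mapo(\cU,\Fun(\cW',\cV))\to\Mapo(\cU,\Fun(\cW,\cV))$ and the right column is $\Mapo(\cU,\Funbilax(\cW',\cV))\to\Mapo(\cU,\Funbilax(\cW,\cV))$, and all four maps are obtained by applying $\Mapo(\cU,-)$ to the commuting square of symmetric monoidal functors
\[\begin{tikzcd}
  \Fun(\cW',\cV) & \Fun(\cW,\cV) \\
  \Funbilax(\cW',\cV) & \Funbilax(\cW,\cV).
  \arrow[from=1-1, to=1-2]
  \arrow[from=1-1, to=2-1]
  \arrow[from=1-2, to=2-2]
  \arrow[from=2-1, to=2-2]
\end{tikzcd}\]
Since $\Fun(\cW,\cV)\to\Funbilax(\cW,\cV)$ is a locally full inclusion, hence a monomorphism in $\Cat_2$ (\cite[Proposition 2.6.10]{abellan2024adjunctions}), and the horizontal maps are induced by $j$, the key input is that this latter square is itself a pullback in $\Cat_2$ — which holds because $\Fun(-,\cV)$ and $\Funbilax(-,\cV)$ both send $j$ to right-orthogonal-compatible maps: concretely, $\partial C_2\to C_2$ and $\pt\sqcup\pt\to\pt$ being the generating maps, the horizontal legs are monomorphisms after restriction and the square is cartesian by \cite[Corollary 2.6.7, Proposition 2.6.10]{abellan2024adjunctions} applied to $\Fun(\cW,\cV)\to\Funbilax(\cW,\cV)$. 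Finally, $\Mapo(\cU,-) = \Map(E\cU,-)$ for the envelope $E\cU$ (or more simply: $\CMon(-)$ followed by $\Map(\cU,-)$), and both $\CMon$ and $\Map(E\cU,-)$ preserve pullbacks, so applying $\Mapo(\cU,-)$ to a pullback square of 2-categories yields a pullback square of spaces. Combining, the displayed square of mapping spaces is a pullback, giving the right-orthogonality we wanted; applying this for both $j=(\pt\sqcup\pt\to\pt)$ and $j=(\partial C_2\to C_2)$ shows $\Funo(\cU,\cV)\to\Funobilax(\cU,\cV)$ is a locally full inclusion.

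The main obstacle I anticipate is the middle step: verifying that the square of 2-categories with corners $\Fun(\cW',\cV),\Fun(\cW,\cV),\Funbilax(\cW',\cV),\Funbilax(\cW,\cV)$ is genuinely a pullback, rather than merely that the vertical maps are locally full inclusions. One clean way to handle it is to avoid it entirely: instead of chasing pullbacks, directly invoke that a functor is a locally full inclusion iff it is local with respect to the two generators, and observe that locality is detected on mapping spaces $\Map(-,\Funo(\cU,\cV))\simeq\Mapo(\cU,\Fun(-,\cV))$; since $\Mapo(\cU,-)$ preserves limits and $\Fun(-,\cV)$, $\Funbilax(-,\cV)$ carry the generators to maps against which $\Fun(\cW,\cV)\to\Funbilax(\cW,\cV)$ is already known to be orthogonal, orthogonality is inherited. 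I would write the proof in this second style, citing \cite[Corollary 2.6.7]{abellan2024adjunctions} for the orthogonality criterion and \cref{l:Funo-internal-universal-propertey} (or the universal properties directly) for commuting $\Mapo(\cU,-)$ past the functor-category constructions.
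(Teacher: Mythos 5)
Your proposal is correct and follows essentially the same route as the paper: reduce to right orthogonality against $\pt\sqcup\pt\to\pt$ and $\partial C_2\to C_2$, translate through the universal properties of $\Funo(\cU,\cV)$ and $\Funobilax(\cU,\cV)$ (using that $\Mapo(\cU,-)$ preserves limits, equivalently that the pullback can be checked after the conservative, limit-preserving forgetful functor to $\Cat_2$), and deduce the resulting pullback of non-monoidal functor 2-categories from the known locally full inclusion $\Fun(-,\cV)\to\Funbilax(-,\cV)$. The only point to tighten is the justification of that pullback: it is verified by mapping in an arbitrary test 2-category $\cT$ and using the Gray-product adjunction, so the locally full inclusion must be invoked for $\Fun(\cT,\cV)\to\Funlax(\cT,\cV)$ with $\cT$ varying, rather than ``applied to $\Fun(\cW,\cV)\to\Funbilax(\cW,\cV)$'' for the fixed $\cW$ --- which is exactly what your second, preferred formulation and the paper's proof do.
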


\begin{proof}
  We will consider only $\Funo(\cU,\cV)\to \Funolax(\cU,\cV)$, and show that it is right orthogonal to the map $A\to B$ which stands either for $\pt\sqcup \pt\to \pt$ or for $\partial C_2\to C_2$:
   \[\begin{tikzcd}
      A & {\Funo(\cU,\cV)} \\
      B & {\Funolax(\cU,\cV)}.
      \arrow[from=1-1, to=1-2]
      \arrow[from=1-1, to=2-1]
      \arrow[from=1-2, to=2-2]
      \arrow[dashed, from=2-1, to=1-2]
      \arrow[from=2-1, to=2-2]
    \end{tikzcd}\]
    Passing through the universal properties, this is equivalent to the following square being a pullback in $\Catsm_2$:
    \[\begin{tikzcd}
      {\Fun(B,\cV)} & {\Funoplax(B,\cV)} \\
      {\Fun(A,\cV)} & {\Funoplax(A,\cV)}.
      \arrow[from=1-1, to=1-2]
      \arrow[from=1-1, to=2-1]
      \arrow[from=1-2, to=2-2]
      \arrow[from=2-1, to=2-2]
    \end{tikzcd}\]
    As the forgetful functor is conservative and preserves limits, it is enough to check that the above square is a pullback in $\Cat_2$.
    However, the fact that $\Fun(\cU,\cV)\to \Funlax(\cU,\cV)$ is a locally full inclusion gives us the lift
    \[\begin{tikzcd}
      A & {\Fun(\cU,\cV)} \\
      B & {\Funlax(\cU,\cV)}
      \arrow[from=1-1, to=1-2]
      \arrow[from=1-1, to=2-1]
      \arrow[from=1-2, to=2-2]
      \arrow[dashed, from=2-1, to=1-2]
      \arrow[from=2-1, to=2-2]
    \end{tikzcd}\]
    which implies the same pullback in $\Cat_2$.
\end{proof}

\begin{observation}\label{o:sm-adj-is-ff}
  Let $\cV\in \Catsm_2$, and consider the diagram from \cref{l:adj-is-ff}:
  \[\begin{tikzcd}
    & {\Fun(\Delta^1,\cV)} \\
    {\Fun^\oplax(\Adj,\cV)} & {\Fun^\oplax(\Delta^1,\cV)}.
    \arrow[from=1-2, to=2-2]
    \arrow[dashed, hook, from=2-1, to=1-2]
    \arrow["{l^*}", from=2-1, to=2-2]
  \end{tikzcd}\]
  All of the above maps are monomorphisms, and the solid arrows are symmetric monoidal, so the dashed arrow is also (uniquely) symmetric monoidal.
\end{observation}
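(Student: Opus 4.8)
The plan is to deduce the symmetric monoidal structure formally from the fact that the forgetful functor $p\colon \Catsm_2 = \CMon(\Cat_2)\to \Cat_2$ is conservative and preserves limits. The first observation is that such a $p$ reflects monomorphisms: a morphism $\varphi\colon \cA\to \cB$ of $\Catsm_2$ is a monomorphism precisely when $\cA\to \cA\times_\cB\cA$ is an isomorphism, and since $p$ preserves this pullback and is conservative, this happens if and only if $p(\varphi)$ is a monomorphism in $\Cat_2$. Applying this to the symmetric monoidal inclusion $j\colon \Fun(\Delta^1,\cV)\to \Funoplax(\Delta^1,\cV)$, which is a locally full inclusion and hence a monomorphism in $\Cat_2$, we get that $j$ is a monomorphism already in $\Catsm_2$.

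Next I would produce the lift by base change. Form the pullback $P := \Funoplax(\Adj,\cV)\times_{\Funoplax(\Delta^1,\cV)}\Fun(\Delta^1,\cV)$ in $\Catsm_2$, along the symmetric monoidal functors $l^*$ and $j$, with projections $\pi_1\colon P\to \Funoplax(\Adj,\cV)$ and $\pi_2\colon P\to \Fun(\Delta^1,\cV)$. As a base change of the monomorphism $j$, the map $\pi_1$ is a monomorphism in $\Catsm_2$. Since $p$ preserves limits, $p(P)$ is the corresponding pullback in $\Cat_2$, so the commuting triangle $l^* \simeq j\circ i$ of \cref{l:adj-is-ff} exhibits the $\Cat_2$-morphism $(\id, i)\colon \Funoplax(\Adj,\cV)\to p(P)$ as a section of $p(\pi_1)$. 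A monomorphism admitting a section is an isomorphism, hence $p(\pi_1)$ is an isomorphism, and by conservativity of $p$ so is $\pi_1$ in $\Catsm_2$. Consequently $\pi_2\circ\pi_1^{-1}\colon \Funoplax(\Adj,\cV)\to \Fun(\Delta^1,\cV)$ is a symmetric monoidal functor with $j\circ(\pi_2\circ\pi_1^{-1})\simeq l^*$ and underlying $\Cat_2$-morphism $p(\pi_2)\circ(\id,i)\simeq i$; this is the required symmetric monoidal refinement of the dashed arrow.

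Uniqueness is then automatic: as $j$ is a monomorphism in $\Catsm_2$, the space of symmetric monoidal functors $\Funoplax(\Adj,\cV)\to \Fun(\Delta^1,\cV)$ whose composite with $j$ is identified with $l^*$ is $(-1)$-truncated, and we have just produced a point of it, so it is contractible; in particular the unique such lift has underlying $\Cat_2$-functor the (also unique) lift $i$ of \cref{l:adj-is-ff}. I do not expect a genuine obstacle here. The only point requiring attention is the compatibility of the $\Catsm_2$-pullback $P$ with its underlying $\Cat_2$-pullback, which is what makes the section $(\id,i)$ — built from the factorization in \cref{l:adj-is-ff} — well defined; this is immediate from the fact that the forgetful functor $\Catsm_2\to\Cat_2$ preserves limits.
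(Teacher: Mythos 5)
Your argument is correct and is essentially the paper's own reasoning made explicit: the paper's observation simply asserts that since all three maps are monomorphisms and the solid arrows are symmetric monoidal, the lift of \cref{l:adj-is-ff} inherits a (unique) symmetric monoidal structure, and your pullback-plus-section argument, using that $\Catsm_2=\CMon(\Cat_2)\to\Cat_2$ is conservative and limit-preserving so that monomorphisms are reflected and the lift can be constructed and seen to be unique, is precisely the formal justification behind that one-line claim.
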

Using this observation, we can prove the symmetric monoidal version of \cref{p:adj-lax-nat}:

\begin{proposition}\label{p:sm-adj-lax-nat}
  Let $\cU,\cV\in \Catsm_2$ and let $F,G\colon \cU\to \cV$ be symmetric monoidal functors.
  A symmetric monoidal lax natural transformation $\alpha\colon F\Rightarrow G$ has a right adjoint in $\Funolax(\cU,\cV)$ if and only if it has a right adjoint in $\Funlax(\cU,\cV)$, namely it is a strong natural transformation and $\alpha_X\colon F(X)\to G(X)$ has a right adjoint for every $X\in \cU$. 
\end{proposition}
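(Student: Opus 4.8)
The plan is to reduce the symmetric monoidal statement to the non-symmetric one (\cref{p:adj-lax-nat}) by exploiting the fact, recorded in \cref{o:sm-adj-is-ff}, that having a right adjoint in a lax functor 2-category is detected by a \emph{monomorphism} into an ambient functor 2-category, together with the internal universal properties of \cref{l:Funo-internal-universal-propertey}. The ``only if'' direction is immediate: the forgetful functor $\Funolax(\cU,\cV)\to\Funlax(\cU,\cV)$ (obtained by forgetting the symmetric monoidal structure on the source, i.e. precomposing with $\mathbb{E}_\infty\to\cU$ at each arity, or more precisely by the locally full inclusion of \cref{l:funo-is-locally-full} composed with the forgetful functor on underlying objects) preserves adjunctions, since any 2-functor does. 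So if $\alpha$ has a right adjoint in $\Funolax(\cU,\cV)$, its image has a right adjoint in $\Funlax(\cU,\cV)$, and then \cref{p:adj-lax-nat} gives that $\alpha$ is strong and each $\alpha_X$ is a right-adjointable map.

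For the ``if'' direction, suppose $\alpha$ is strong symmetric monoidal, is strong as a natural transformation, and each $\alpha_X$ admits a right adjoint. The key observation is that ``$\alpha$ has a right adjoint in $\Funolax(\cU,\cV)$'' can be tested by mapping in $\Adj$ and using the universal property. Concretely, I would compute, for $\cW = \Adj$ in the universal property of $\Funolax$, that
\[
  \Map(\Adj,\Funolax(\cU,\cV))\simeq \Mapo(\cU,\Funoplax(\Adj,\cV)),
\]
and compare this to $\Map(\Delta^1,\Funolax(\cU,\cV))\simeq \Mapo(\cU,\Funoplax(\Delta^1,\cV))$ under precomposition with $l\colon\Delta^1\to\Adj$. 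By \cref{o:sm-adj-is-ff}, the functor $\Funoplax(\Adj,\cV)\to\Fun(\Delta^1,\cV)\to\Funoplax(\Delta^1,\cV)$ factors through $\Fun(\Delta^1,\cV)$ and the first leg is a symmetric monoidal monomorphism. Since $\Mapo(\cU,-)$ preserves limits, hence monomorphisms, applying it yields that $\Mapo(\cU,\Funoplax(\Adj,\cV))\to\Mapo(\cU,\Fun(\Delta^1,\cV))$ is a monomorphism whose image consists of those symmetric monoidal functors $\cU\to\Fun(\Delta^1,\cV)$ landing, objectwise, in the essential image of $\Funoplax(\Adj,\cV)\to\Fun(\Delta^1,\cV)$, i.e. those sending each object of $\cU$ to a right-adjointable arrow of $\cV$.

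So the task becomes: given $\alpha$, viewed as a strong symmetric monoidal functor $\tilde\alpha\colon\cU\to\Fun(\Delta^1,\cV)$ (this is exactly a strong symmetric monoidal strong natural transformation $F\Rightarrow G$, via $\Funo(\cU,\cV)\simeq\Funo(\cU,\Fun(\Delta^1,\cV))$-type identifications together with $\Fun(\cW,\Funo(\cU,\cV))\simeq\Funo(\cU,\Fun(\cW,\cV))$), show that it factors through $\Funoplax(\Adj,\cV)$. But non-symmetric-monoidally, \cref{p:adj-lax-nat} (in the form of \cref{l:adj-is-ff}) already provides the factorization $\cU\to\Funoplax(\Adj,\cV)$ of the underlying functor, because each $\alpha_X$ is right-adjointable. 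The only remaining point is that this factorization is \emph{automatically} compatible with the symmetric monoidal structures — and this is where the monomorphism of \cref{o:sm-adj-is-ff} does the work: since $\Funoplax(\Adj,\cV)\hookrightarrow\Fun(\Delta^1,\cV)$ is a monomorphism in $\Catsm_2$ (being a symmetric monoidal monomorphism of 2-categories), any lift of a symmetric monoidal functor through it along a monomorphism is canonically symmetric monoidal; equivalently, $\Funo(\cU,\Funoplax(\Adj,\cV))\to\Funo(\cU,\Fun(\Delta^1,\cV))$ is a monomorphism with the expected image. Chasing through the universal property of $\Funolax$, this produces the desired right adjoint of $\alpha$ in $\Funolax(\cU,\cV)$, and under the forgetful functor to $\Funlax(\cU,\cV)$ it restricts to the right adjoint produced by \cref{p:adj-lax-nat}, so the ``namely'' clause is consistent.

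The main obstacle, and the step I would be most careful about, is bookkeeping the various universal properties so that ``right-adjointable in $\Funolax$'' is genuinely witnessed by the single monomorphism $\Funoplax(\Adj,\cV)\hookrightarrow\Fun(\Delta^1,\cV)$ after applying $\Mapo(\cU,-)$ — in particular making sure that $\Mapo(\cU,-)$ indeed sends this to a monomorphism (it does, as $\Mapo(\cU,-)=\Map(\cU,-)$ composed with the conservative limit-preserving forgetful functor, or directly because $\CMon$ and $\Map(\cU,-)$ preserve limits) and that ``landing objectwise in the essential image'' is the correct description of the image, which relies on \cref{l:adj-is-ff} identifying that essential image with right-adjointable arrows. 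Everything else is formal manipulation of the adjunctions defining the Gray-product functor categories, analogous to the proof of \cref{l:adj-is-ff} but carried out one category-level up.
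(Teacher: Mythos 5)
Your proposal is correct and follows essentially the same route as the paper: reduce via the defining universal property of $\Funolax$ to a factorization problem for the symmetric monoidal functor $\cU\to\Funoplax(\Delta^1,\cV)$ corresponding to $\alpha$, use strongness to land in $\Fun(\Delta^1,\cV)$, and then use the symmetric monoidal fully faithful embedding $\Funoplax(\Adj,\cV)\hookrightarrow\Fun(\Delta^1,\cV)$ of \cref{l:adj-is-ff,o:sm-adj-is-ff} together with object-wise adjointability to lift. The extra bookkeeping you flag (that $\Mapo(\cU,-)$ preserves the monomorphism and that the essential image is the right-adjointable arrows) is exactly what the paper's proof also relies on, just stated more tersely.
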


\begin{proof}
  The ``only if'' direction follows from \cref{p:adj-lax-nat}, by forgetting the symmetric monoidal structure. 
  For the ``if'' direction, suppose $\alpha$ is strong and $\alpha_X$ is left adjoint for every $X\in \cV$. 
  We want to show that $\alpha$ is in the image of the map
  $$\Map(\Adj,\Fun^\lax_\otimes(\cU,\cV))\oto{l^*}\Map(\Delta^1,\Fun^\lax_\otimes(\cU,\cV))$$
  which corresponds to the map
  $$\Map_\otimes(\cU,\Fun^\oplax(\Adj,\cV))\oto{l^*}\Map_\otimes(\cU,\Fun^\oplax(\Delta^1,\cV)).$$
  By the assumption that $\alpha$ is a strong natural transformation, $\alpha$ is in the image of
  $$\Map_\otimes(\cU,\Fun(\Delta^1,\cV))\to \Map_\otimes(\cU,\Fun^\oplax(\Delta^1,\cV)).$$
  However, by \cref{l:adj-is-ff,o:sm-adj-is-ff} there is a fully faithful symmetric monoidal embedding
  $$\Fun^\oplax(\Adj,\cV)\hookrightarrow\Fun(\Delta^1,\cV)$$
  thus to see that $\alpha$ is in the image of $\Map_\otimes(\cU,\Fun^\oplax(\Adj,\cV))$ it is enough to notice that $\alpha$ is object-wise left adjoint.
\end{proof}

Using the fact that $\Funo(\cU,\cV)\to \Funolax(\cU,\cV)$ is a locally full inclusion (\cref{l:funo-is-locally-full}), we get the following corollary for symmetric monoidal strong natural transformations:

\begin{corollary}\label{c:sm-adj-nat}
  A symmetric monoidal strong natural transformation $\alpha\colon F\Rightarrow G$ has a right adjoint in $\Funo(\cU,\cV)$ if and only if it has a right adjoint in $\Fun(\cU,\cV)$, namely $\alpha_X\colon FX\to GX$ has a right adjoint for every $X\in \cU$ and for every $f\colon X\to Y$ the corresponding Beck-Chevalley 2-morphism is invertible.
\end{corollary}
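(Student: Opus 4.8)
The plan is to deduce \cref{c:sm-adj-nat} from \cref{p:sm-adj-lax-nat} together with the locally full inclusion $\Funo(\cU,\cV)\hookrightarrow\Funolax(\cU,\cV)$ of \cref{l:funo-is-locally-full}, in precisely the way \cref{c:adj-nat} follows from \cref{p:adj-lax-nat}: we reduce ``$\alpha$ has a right adjoint in $\Funo(\cU,\cV)$'' to ``$\alpha$ has a right adjoint in $\Funolax(\cU,\cV)$ whose underlying lax natural transformation is strong''. The forgetful $2$-functor $\Funo(\cU,\cV)\to\Fun(\cU,\cV)$ (induced from the universal properties by forgetting the commutative monoid structure) preserves adjunctions, so a right adjoint of $\alpha$ in $\Funo(\cU,\cV)$ is carried to a right adjoint of $\alpha$ in $\Fun(\cU,\cV)$; the equivalence with the object-wise condition asserted in the statement is then \cref{c:adj-nat}. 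This settles the ``only if'' direction immediately.

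For the converse, suppose $\alpha$ has a right adjoint in $\Fun(\cU,\cV)$, so by \cref{c:adj-nat} each $\alpha_X$ has a right adjoint $\alpha_X^R$ and each Beck--Chevalley $2$-morphism $Ff\,\alpha_X^R\Rightarrow\alpha_Y^R\,Gf$ is invertible. Since $\alpha$ is a symmetric monoidal strong --- hence lax --- natural transformation that is object-wise left adjoint, \cref{p:sm-adj-lax-nat} produces a right adjoint $\beta$ of $\alpha$ in $\Funolax(\cU,\cV)$. Applying the forgetful functor to $\Funlax(\cU,\cV)$ and invoking uniqueness of adjoints together with \cref{p:adj-lax-nat}, the underlying lax natural transformation of $\beta$ is identified with the transformation $\alpha^R$ with components $\alpha_X^R$ and Beck--Chevalley structure $2$-morphisms; these are invertible by hypothesis, so this underlying transformation is strong. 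Hence $\beta$, which already carries its symmetric monoidal structure as a morphism of $\Funolax(\cU,\cV)$, is a morphism of $\Funo(\cU,\cV)$.

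It then remains to transport the adjunction $\alpha\dashv\beta$ from $\Funolax(\cU,\cV)$ down to $\Funo(\cU,\cV)$. Its unit and counit are $2$-morphisms among the $1$-morphisms $\id_F$, $\id_G$, $\beta\alpha$ and $\alpha\beta$, all of which now lie in $\Funo(\cU,\cV)$; since $\Funo(\cU,\cV)\hookrightarrow\Funolax(\cU,\cV)$ is a locally full inclusion, hence locally fully faithful, the unit, the counit and the triangle identities lift uniquely, exhibiting $\beta$ as a right adjoint of $\alpha$ in $\Funo(\cU,\cV)$. Alternatively, one can argue entirely through universal properties in the style of the proof of \cref{p:sm-adj-lax-nat}: factor the classifying symmetric monoidal functor $\cU\to\Fun(\Delta^1,\cV)$ of $\alpha$ through the fully faithful symmetric monoidal embedding $\Funoplax(\Adj,\cV)\hookrightarrow\Fun(\Delta^1,\cV)$ of \cref{l:adj-is-ff} and \cref{o:sm-adj-is-ff} --- possible because $\alpha$ is object-wise left adjoint --- to get an adjunction in $\Funolax(\cU,\cV)$, and then use invertibility of the Beck--Chevalley cells to see that this adjunction refines to one in $\Funo(\cU,\cV)$.

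The step I expect to need the most care is the identification in the second paragraph: matching the abstractly produced right adjoint $\beta$ with the explicit Beck--Chevalley transformation on underlying data, and --- more delicately --- verifying that a morphism of $\Funolax(\cU,\cV)$ whose underlying lax natural transformation is strong is automatically a morphism of $\Funo(\cU,\cV)$. Concretely this amounts to checking that the square relating $\Funo$, $\Funolax$, $\Fun$ and $\Funlax$ (the two locally full inclusions and the two functors forgetting the symmetric monoidal structure) is cartesian on $1$-morphisms; this should follow formally from the universal properties, using \cref{l:Funo-internal-universal-propertey}, but it is the point that genuinely needs to be spelled out.
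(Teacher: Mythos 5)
Your argument is correct and follows essentially the same route as the paper, which deduces the corollary from \cref{p:sm-adj-lax-nat} together with the locally full inclusion of \cref{l:funo-is-locally-full} (the paper leaves the deduction implicit; you simply spell it out). The point you flag as delicate—that a morphism of $\Funolax(\cU,\cV)$ whose underlying lax transformation is strong already lies in $\Funo(\cU,\cV)$—is handled by the same mechanism the paper uses in \cref{l:funo-is-locally-full,p:sm-adj-lax-nat}: factoring through the symmetric monoidal monomorphism $\Fun(\Delta^1,\cV)\hookrightarrow\Funoplax(\Delta^1,\cV)$ is a property detected after forgetting the symmetric monoidal structure, since the forgetful functor from $\Catsm_2$ is conservative and preserves limits.
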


\subsection{Bimodules in 2-categories}

In this subsection we deduce the projection formula for bimodule maps.
Before considering bimodules, we will first study algebras over an arbitrary operad, using the symmetric monoidal envelope.
Let $\cO$ be an operad and $\cC$ a symmetric monoidal 1-category, the symmetric monoidal envelope $\Env(\cO)$ was constructed in \cite[Construction 2.2.4.1]{ha} such that
$$\Alg_\cO(\cC)\simeq \Funo(\Env(\cO),\cC).$$

\begin{example}\label{e:bimodule-envelope}
  Our main example of interest is the bimodule operad $\BM$.
  The symmetric monoidal envelope of $\BM$ is a (1,1)-category, which we describe explicitly:
  \begin{itemize}
    \item The objects of $\Env(\BM)$ are finite sets $X$ with a partition into three parts
    $$X=X_L\sqcup X_M\sqcup X_R.$$
    \item A morphism $X\to Y$ in $\Env(\BM)$ is a function $f\colon X\to Y$ together with linear orderings of the fibers $f^{-1}(y)$ for every $y\in Y$, such that:
    \begin{enumerate}
      \item for $l\in Y_L$, $f^{-1}(l)\subseteq X_L$,
      \item for $r\in Y_R$, $f^{-1}(r)\subseteq X_R$, and
      \item for $m\in Y_M$, $f^{-1}(m)$ is partitioned into ordered segments 
      $$f^{-1}(m)\cap X_L
      \ <\
      f^{-1}(m)\cap X_M
      \ <\ 
      f^{-1}(m)\cap X_R$$
      and $f^{-1}(m)\cap X_M$ has exactly 1 element.
    \end{enumerate}
  \end{itemize}
  The symmetric monoidal structure of $\Env(\BM)$ is given by disjoint union of sets, respecting the partitions.
  Thus, a symmetric monoidal functor $\Env(\BM)\to \cC$ is of the form
  $$X\mapsto A^{\otimes X_L}\otimes M^{\otimes X_M}\otimes B^{\otimes X_R}$$
  for some $A,M,B\in \cC$, and the morphisms encode the structure of an associative algebra on $A$ and $B$ and an $A$-$B$-bimodule on $M$.
\end{example}

\begin{definition}
  Let $\cU\in \Catsm_2$ be a symmetric monoidal 2-category and $\cO$ an operad.
  The 2-category of $\cO$-algebras in $\cU$ is defined as
  $$\Alg_\cO(\cU):=\Funo(\Env(\cO),\cU).$$
  Similarly, $\cO$-algebras with (op)lax $\cO$-algebra maps are defined as 
  $$\Alg^\bilax_\cO(\cU):=\Funobilax(\Env(\cO),\cU).$$
\end{definition}

\begin{remark}
  In the special case of the 2-category of 1-categories, $\Alg^\bilax_\cO(\Cat_1)$ can also be described in terms of fibrations over $\cO$, see \cite{haugseng2023lax}.
  We will not require this perspective, so we will not prove the equivalence of the two definitions.
\end{remark}

As an immediate corollary of \cref{l:Funo-internal-universal-propertey}, we can consider diagrams of $\cO$-algebras as $\cO$-algebras in the category of diagrams.

\begin{corollary}\label{c:algebras-in-lax-arrow-category}
  Let $\cO$ be an operad, $\cU\in \Catsm_2$ and $\cV\in \Cat_2$.
  There are isomorphisms of 2-categories
  $$\Fun(\cV,\Alg_\cO(\cU))\simeq \Alg_\cO(\Fun(\cV,\cU)).$$
  $$\Funlax(\cV,\Alg_\cO^\oplax(\cU))\simeq \Alg_\cO^\oplax(\Funlax(\cV,\cU)).$$
\end{corollary}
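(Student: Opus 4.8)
The statement is that $\Fun(\cV,\Alg_\cO(\cU))\simeq \Alg_\cO(\Fun(\cV,\cU))$ and $\Funlax(\cV,\Alg_\cO^\oplax(\cU))\simeq \Alg_\cO^\oplax(\Funlax(\cV,\cU))$. Given that the excerpt explicitly says "as an immediate corollary of \cref{l:Funo-internal-universal-propertey}", the plan is simply to unwind the definitions and match them against the three isomorphisms provided by that lemma.

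Let me think about what the definitions say. By definition $\Alg_\cO(\cU) = \Funo(\Env(\cO),\cU)$ and $\Alg_\cO^\oplax(\cU) = \Funooplax(\Env(\cO),\cU)$ (reading off the $\bilax$ convention — for the oplax version we take $\Funooplax$). So the left-hand side of the first isomorphism is $\Fun(\cV,\Funo(\Env(\cO),\cU))$, and the right-hand side is $\Funo(\Env(\cO),\Fun(\cV,\cU))$. These are precisely the two sides of the first isomorphism in \cref{l:Funo-internal-universal-propertey}, with $\cW = \cV$ and $\cU$ there being $\Env(\cO)$. For the second isomorphism, the left-hand side is $\Funlax(\cV,\Funooplax(\Env(\cO),\cU))$ and the right-hand side is $\Funooplax(\Env(\cO),\Funlax(\cV,\cU))$; this is the third isomorphism in \cref{l:Funo-internal-universal-propertey}, again with $\cW=\cV$ and the "$\cU$" slot filled by $\Env(\cO)$. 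So there is genuinely nothing to prove beyond substitution.

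Concretely I would write: apply \cref{l:Funo-internal-universal-propertey} with the symmetric monoidal 2-category $\Env(\cO)$ in place of $\cU$ (it is symmetric monoidal by \cref{e:bimodule-envelope} and more generally since $\Env(\cO)$ carries a symmetric monoidal structure), with $\cW = \cV$, and with $\cV$ unchanged. The first displayed isomorphism of the lemma becomes $\Fun(\cV, \Funo(\Env(\cO),\cU)) \simeq \Funo(\Env(\cO), \Fun(\cV,\cU))$, and unwinding the definition $\Alg_\cO(-) = \Funo(\Env(\cO),-)$ on both sides gives the first claim. The third displayed isomorphism becomes $\Funlax(\cV, \Funooplax(\Env(\cO),\cU)) \simeq \Funooplax(\Env(\cO), \Funlax(\cV,\cU))$, and unwinding $\Alg_\cO^\oplax(-) = \Funooplax(\Env(\cO),-)$ gives the second claim.

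There is essentially no obstacle here; the only thing to be slightly careful about is the bookkeeping of lax versus oplax — in \cref{l:Funo-internal-universal-propertey} the "outer" variance and the variance appearing inside the $\Funo$-with-decoration are swapped (the statement pairs $\Funoplax(\cW,-)$ with $\Funolax(\cU,-)$, and $\Funlax(\cW,-)$ with $\Funooplax(\cU,-)$), so one must make sure the decoration chosen for $\Alg_\cO^{(\cdot)}$ matches the decoration on the $\cV$-side. Since the corollary as stated pairs $\Funlax(\cV,-)$ on the outside with $\Alg_\cO^\oplax$, this is exactly the third isomorphism of the lemma, and the match is clean. (One could equally record the lax/oplax-swapped version coming from the second isomorphism of the lemma, but that is not needed for the applications in the paper.) If desired one can add the remark that the first isomorphism is the "strong/strong" case of the second, obtained by restricting along the locally full inclusions of \cref{l:funo-is-locally-full} and \cref{l:adj-is-ff}, but this is not necessary since it follows directly from the lemma.
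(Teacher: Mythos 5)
Your proposal is correct and coincides with the paper's own (implicit) argument: the paper treats the statement as an immediate consequence of \cref{l:Funo-internal-universal-propertey}, obtained exactly as you describe by taking $\Env(\cO)$ in the symmetric monoidal source slot, $\cW=\cV$, and unwinding $\Alg_\cO(-)=\Funo(\Env(\cO),-)$ and $\Alg_\cO^\oplax(-)=\Funooplax(\Env(\cO),-)$, with the lax/oplax pairing matched via the first and third displayed isomorphisms of that lemma.
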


The cases we care about are associative algebras, commutative algebras, and bimodules, which are algebras over the following operads \cite{ha}:
\begin{align*}
  \Alg(\cU):=&\Alg_{\EE_1}(\cU), && 
  \quad \Alg^\bilax(\cU):=\Alg^\bilax_{\EE_1}(\cU),\\
  \CAlg(\cU):=&\Alg_{\EE_\infty}(\cU), && 
  \ \CAlg^\bilax(\cU):=\Alg^\bilax_{\EE_\infty}(\cU),\\
  \BMod(\cU):=&\Alg_\BM(\cU), && 
  \BMod^\bilax(\cU):=\Alg^\bilax_\BM(\cU).
\end{align*}
The bimodule operad comes equipped with inclusions of $\EE_1$-operads from the left and right:
$$\EE_1\to \BM\leftarrow \EE_1,$$
so that for $A,B\in \Alg(\cU)$, the category of $A$-$B$-bimodules is given by the pullback
\begin{align*}
  \Bmod[A]{B}(\cU)&:=\{A\}\times_{\Alg(\cU)}\BMod(\cU)\times_{\Alg(\cU)} \{B\},\\
  \Bmod[A]{B}^\bilax(\cU)&:=\{A\}\times_{\Alg^\bilax(\cU)}\BMod^\bilax(\cU)\times_{\Alg^\bilax(\cU)} \{B\}.
\end{align*}
Left and right module categories are obtained by setting one side to be the unit:
\begin{align*}
  \LMod_A(\cU)&:=\Bmod[A]{\one}(\cU),&&
  \LMod^\bilax_A(\cU):=\Bmod[A]{\one}^\bilax(\cU),\\
  \RMod_A(\cU)&:=\Bmod[\one]{A}(\cU),&&
  \RMod_A^\bilax(\cU):=\Bmod[\one]{A}^\bilax(\cU).
\end{align*}
There is also a map of operads $\BM\to \EE_1$ such that the induced functor $\Alg(\cU)\to \BMod(\cU)$ equips each algebra $A$ with its canonical $A$-$A$-bimodule structure.

For $\cV\in \Cat_2$, consider the projection to the point $p\colon \cV\to \pt$. 
The induced symmetric monoidal functor $p^*\colon \cU\to \Fun(\cV,\cU)$ sends $X\in \cU$ to the constant diagram on $X$. 

\begin{corollary}\label{c:modules-in-lax-arrow-category}
  For $A,B\in \Alg(\cU)$, there are isomorphism 
  $$\Fun(\cV,\Bmod[A]{B}(\cU))\simeq \Bmod[p^*A]{p^*B}(\Fun(\cV,\cU)).$$
  $$\Funlax(\cV,\Bmod[A]{B}^\oplax(\cU))\simeq \Bmod[p^*A]{p^*B}^\oplax(\Funlax(\cV,\cU)).$$
\end{corollary}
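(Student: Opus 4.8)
The plan is to obtain both isomorphisms by combining the isomorphisms of \cref{c:algebras-in-lax-arrow-category} with the fact that $\Fun(\cV,-)$ and $\Funlax(\cV,-)$ preserve the relevant pullbacks, exactly mirroring the way $\Bmod[A]{B}$ was defined as a pullback along the two copies of $\Alg$. Concretely, for the first isomorphism, I would start from the defining pullback square $\Bmod[A]{B}(\cU) = \{A\}\times_{\Alg(\cU)}\BMod(\cU)\times_{\Alg(\cU)}\{B\}$ and apply $\Fun(\cV,-)$. Since $\Fun(\cV,-)$ is a right adjoint (it is cotensoring by $\cV$ in $\Cat_2$), it preserves limits, so $\Fun(\cV,\Bmod[A]{B}(\cU))$ is the pullback of $\Fun(\cV,\{A\})\to \Fun(\cV,\Alg(\cU))\leftarrow \Fun(\cV,\BMod(\cU))$ and the symmetric one on the right. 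Now $\Fun(\cV,\{A\})\simeq \{p^*A\}$ since $\{A\}$ is terminal-shaped (a point) and $p^*$ is precisely the constant-diagram functor, and by \cref{c:algebras-in-lax-arrow-category} we have $\Fun(\cV,\Alg(\cU))\simeq \Alg(\Fun(\cV,\cU))$ and $\Fun(\cV,\BMod(\cU))\simeq \BMod(\Fun(\cV,\cU))$. It remains to check that these identifications carry the maps in the pullback diagram to the maps defining $\Bmod[p^*A]{p^*B}(\Fun(\cV,\cU))$; this is where one uses that the operad maps $\EE_1\to \BM\leftarrow\EE_1$ induce, under $\Env$ and \cref{l:Funo-internal-universal-propertey}, the same restriction functors on both sides, naturally in the symmetric monoidal target. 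Assembling, the pullback of the transported diagram is by definition $\Bmod[p^*A]{p^*B}(\Fun(\cV,\cU))$.

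For the second isomorphism the argument is the same with $\Funlax(\cV,-)$ in place of $\Fun(\cV,-)$ and the oplax variants of the algebra and bimodule categories; here one uses that $\Funlax(\cV,-)$ is again a right adjoint (by its defining universal property via the Gray product), hence preserves the pullbacks, together with the oplax half of \cref{c:algebras-in-lax-arrow-category}. One should be slightly careful that $\{A\}$, viewed inside $\Alg^\oplax(\cU)$, is still a point (which holds because $\Funobilax$ and $\Fun$ have the same core, as noted in the remark following \cref{e:lax-arrow-category}), so that $\Funlax(\cV,\{A\})\simeq \{p^*A\}$ with no lax data appearing on the one-point factors.

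The main obstacle I anticipate is not the existence of the isomorphism but its \emph{compatibility with the structure maps}: one must verify that under \cref{c:algebras-in-lax-arrow-category} the two functors $\BMod(\cU)\to\Alg(\cU)$ (restriction along $\EE_1\to\BM$ on the left and right) correspond to the analogous functors $\BMod(\Fun(\cV,\cU))\to\Alg(\Fun(\cV,\cU))$, and likewise that the chosen objects $A,B$ go to $p^*A,p^*B$. This is a naturality statement for the isomorphisms of \cref{c:algebras-in-lax-arrow-category} in the operad variable, which ultimately reduces to naturality of the identifications $\Funo(\Env(\cO),\Fun(\cV,\cU))\simeq \Fun(\cV,\Funo(\Env(\cO),\cU))$ in $\cO$; since those identifications come from the internal universal property of \cref{l:Funo-internal-universal-propertey} applied uniformly, this naturality is formal, but it is the one point that genuinely needs to be spelled out rather than waved through. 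Everything else — preservation of pullbacks, $\Funlax(\cV,\{A\})\simeq \{p^*A\}$ — is routine.
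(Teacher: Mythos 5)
Your proposal is correct and follows essentially the same route as the paper: the paper's proof simply notes that $\Fun(\cV,-)$ and $\Funlax(\cV,-)$ commute with pullbacks and invokes \cref{c:algebras-in-lax-arrow-category} for $\cO=\BM$. Your additional attention to naturality in the operad variable and to the identification $\Funlax(\cV,\{A\})\simeq\{p^*A\}$ just makes explicit what the paper leaves implicit.
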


\begin{proof}
  Both $\Fun(\cV,-)$ and $\Funlax(\cV,-)$ commute with pullbacks.
  Thus, the result follows from \cref{c:algebras-in-lax-arrow-category} for $\cO=\BM$.
\end{proof}

Fix algebras $A,B\in \Alg(\cU)$.
For a bimodule $M\in \Bmod[A]{B}(\cU)$, denote the bi-action on $M$ by 
$$\beta_M\colon A\otimes M\otimes B\to M.$$
Suppose $f\colon M\to N\in  \Bmod[A]{B}(\cU)$ is a map of bimodules.
As part of the bimodule structure, $f$ in particular commutes with the bi-action:
\[\begin{tikzcd}
	{A\otimes M\otimes B} && {A\otimes N\otimes B} \\
	M && N.
	\arrow["{\id_A\otimes f\otimes \id_B}", from=1-1, to=1-3]
	\arrow["{\beta_M}"', from=1-1, to=2-1]
	\arrow["{\beta_N}", from=1-3, to=2-3]
	\arrow["f"', from=2-1, to=2-3]
\end{tikzcd}\]
Similarly, if $f\colon M\to N\in  \Bmod[A]{B}^\lax(\cU)$ is a lax linear map of bimodules, then it commutes with the bi-action up to a specified 2-morphism:
\[\begin{tikzcd}
	{A\otimes M\otimes B} && {A\otimes N\otimes B} \\
	M && N.
	\arrow["{\id_A\otimes f\otimes \id_B}", from=1-1, to=1-3]
	\arrow["{\beta_M}"', from=1-1, to=2-1]
	\arrow[shorten <=24pt, shorten >=24pt, Rightarrow, from=1-3, to=2-1]
	\arrow["{\beta_N}", from=1-3, to=2-3]
	\arrow["f"', from=2-1, to=2-3]
\end{tikzcd}\]

\begin{proposition}[projection formula]\label{p:projection-formula}
  Let $\cU\in \Catsm_2$.
  Given $A,B\in \Alg(\cU)$ and a map of bimodules $f\colon M\to N\in \Bmod[A]{B}(\cU)$, if $f$ has a right adjoint in $\cU$ then it has a right adjoint in $\Bmod[A]{B}^\lax(\cU)$, where $f^R$ commutes with the bi-action up to the 2-morphism
  $$\beta_M(\id_A\otimes f^R\otimes \id_B)\xRightarrow{u_f}f^Rf\beta_M (\id_A\otimes f^R\otimes \id_B)\xRightarrow{\sim} f^R\beta_N (\id_A\otimes ff^R\otimes \id_B)\xRightarrow{c_f}f^R\beta_N.$$
  Moreover, if the above composition is an isomorphism then the adjunction is in $\Bmod[A]{B}(\cU)$.
\end{proposition}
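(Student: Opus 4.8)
The plan is to recognize the projection formula as a special case of the machinery developed in the previous subsections, applied to the operad $\cO = \BM$. The key conceptual point is that a map of bimodules $f\colon M\to N$ in $\Bmod[A]{B}(\cU)$ is, by definition, a strong symmetric monoidal natural transformation between the corresponding symmetric monoidal functors $\Env(\BM)\to \cU$; here we use the fixed algebras $A,B$ to restrict attention to the fiber over $\{A\}\times\{B\}$. Via \cref{c:modules-in-lax-arrow-category} and the identification $\Bmod[A]{B}^\lax(\cU) = \Funobilax$ of the bimodule envelope into $\cU$ (restricted appropriately), asking for a right adjoint to $f$ in $\Bmod[A]{B}^\lax(\cU)$ is the same as asking for a right adjoint in $\Funolax(\Env(\BM),\cU)$ landing in the right fibers.

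First I would apply \cref{p:sm-adj-lax-nat} with $\cU' = \Env(\BM)$ playing the role of the source symmetric monoidal 2-category and $\cU$ the target: since $f$ is strong (it is a genuine map of bimodules) and since for every object $W\in \Env(\BM)$ the component $f_W$ is, up to the symmetric monoidal coherences, a tensor power of $f$ together with identities on powers of $A$ and $B$ — and tensor products and identities of left adjoints are left adjoints — we conclude that $f_W$ is a left adjoint for every $W$. Hence \cref{p:sm-adj-lax-nat} applies and produces a right adjoint $f^R$ in $\Funolax$, which by \cref{c:modules-in-lax-arrow-category} lands in $\Bmod[A]{B}^\lax(\cU)$ (the fibers over $A$ and $B$ are preserved since on the $\EE_1$-sub-operads $f$ is the identity, so its adjoint is too). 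The lax structure 2-morphism witnessing that $f^R$ commutes with $\beta$ is precisely the Beck–Chevalley 2-morphism of the preceding discussion instantiated at the relevant morphism of $\Env(\BM)$, which unwinds to the displayed composite $u_f$, coherence, $c_f$.

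For the "moreover" clause, I would invoke \cref{c:sm-adj-nat} (the strong version): the adjunction $f\dashv f^R$ lifts to $\Bmod[A]{B}(\cU)$, i.e.\ $f^R$ is a genuine (strong) map of bimodules, if and only if the Beck–Chevalley 2-morphisms are invertible for all morphisms of $\Env(\BM)$. Since every morphism of $\Env(\BM)$ is built from the algebra multiplications, the unit inclusions, and the bi-action structure maps by composition and tensor, and since Beck–Chevalley 2-morphisms compose and tensor, invertibility reduces to the single generating case given by the bi-action $\beta_M$ — that is, to the displayed composite being an isomorphism. The main obstacle I anticipate is purely bookkeeping: carefully matching the abstractly-produced Beck–Chevalley 2-morphism in $\Funolax(\Env(\BM),\cU)$ with the concrete formula stated in the proposition, and checking that the reduction "invertible on all of $\Env(\BM)$ $\iff$ invertible on the single bi-action morphism" genuinely follows from stability of Beck–Chevalley cells under composition and tensoring (together with the explicit generators-and-relations description of $\Env(\BM)$ from \cref{e:bimodule-envelope}). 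No new ideas beyond \cref{p:sm-adj-lax-nat} and \cref{c:sm-adj-nat} should be required.
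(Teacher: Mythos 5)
Your proposal is correct and follows essentially the same route as the paper: apply \cref{p:sm-adj-lax-nat} to the envelope $\Env(\BM)$, noting each component $f_X$ is a tensor product of $f$ with identities, identify the lax structure with the Beck--Chevalley 2-morphism, and for the ``moreover'' clause use \cref{c:sm-adj-nat} together with the generators of $\Env(\BM)$, where only the bi-action generator is non-degenerate. The bookkeeping you flag is exactly how the paper's proof concludes, so no gap remains.
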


\begin{proof}
  It suffices to show that $f$ has a right adjoint in $\BMod^\lax(\cU)=\Funolax(\Env(\BM),\cU)$.
  Recall the explicit description of $\Env(\BM)$ given in \cref{e:bimodule-envelope}.
  By \cref{p:sm-adj-lax-nat}, we need to show that for every $X\in \Env(\BM)$, the map
  $$f_X\colon A^{\otimes X_L}\otimes M^{\otimes X_M}\otimes B^{\otimes X_R}\to A^{\otimes X_L}\otimes N^{\otimes X_M}\otimes B^{\otimes X_R}$$
  has a right adjoint.
  Indeed, $f_X$ is a tensor product of copies of $\id_A$, $f$, and $\id_B$, which all have right adjoints.
  The lax linear structure on $f^R$ then comes from the Beck-Chevalley 2-morphism, which is the composition described above.

  Assume now that the composition above is an isomorphism, by \cref{c:sm-adj-nat} we need to show that all Beck-Chevalley 2-morphisms of $f^R$ are isomorphisms.
  However, every map in $\Env(\BM)$ is produced by tensor products and compositions from the following maps:
  $$\{l,m,r\}\to \{m\},$$
  $$\emptyset \to \{l\},
  \hspace{15pt}
  \{l_0,l_1\}\to \{l\},$$
  $$\emptyset \to \{r\},
  \hspace{15pt}
  \{r_0,r_1\}\to \{r\},$$
  with the implied partitions and orderings of the fibers.
  Thus, it is enough to show that the Beck-Chevalley 2-morphisms are invertible for the squares induced from these five maps.
  For the first map this is exactly the assumption, and for the rest of the maps we get degenerate squares.
\end{proof}

\section{Rigid algebras}\label{s:rigid-algebras}

We now consider rigid algebras in a general symmetric monoidal 2-category $\cU\in \Catsm_2$.

\begin{definition}
  An algebra $A\in \Alg(\cU)$ is called \emph{rigid} if:
  \begin{enumerate}
    \item The unit $\eta\colon \one\to A$ has a right adjoint $\epsilon\colon A\to \one$.
    \item The multiplication $\mu\colon A\otimes A\to A$ has a right adjoint $\delta\colon A\to A\otimes A$ in $\Bmod[A]{A}(\cU)$.
  \end{enumerate}
  Denote by $\Algrig(\cU)\subseteq\Alg(\cU)$ the full 2-subcategory of rigid algebras.
\end{definition}

Note that the above definition specializes to the definition of rigid categories in $\Prl_B$.
In this section, we study rigid algebras in full generality.

Our first goal is to demonstrate that rigid algebras are a categorification of Frobenius algebras.
In a symmetric monoidal 1-category $\cC$, a Frobenius algebra is an algebra $A\in \Alg(\cC)$ equipped with additional structure that, in particular, makes $A$ self-dual.
When passing from 1-categories to  2-categories, structure maps can often be obtained as adjoints, turning what was extra data into a property. 
Rigid algebras arise from Frobenius algebras through precisely this process.
To set the stage, we begin by reviewing the 1-categorical notions of duality and Frobenius algebras.

Our second goal is to study algebra maps between rigid algebras $f\colon A\to B\in \Algrig(\cU)$.
Using the terminology introduced in \cref{s:projection-formula}, we show that $f$ is a rigid algebra in the lax arrow category $\Funlax(\Delta^1,\cU)$.
In particular, $f$ is self-dual in the lax arrow category, which implies that $f$ is left adjoint to its transpose.
Moreover, we deduce that all 2-morphisms in $\Algrig(\cU)$ are invertible, that is, $\Algrig(\cU)$ is in fact a 1-category.

\begin{remark}
  Many of the definitions and results in this section can be adapted to a (not necessarily symmetric) monoidal setting.
  The main difference would be that instead of duals and transposes we would need to keep track of left and right duals and left and right transposes.
  To keep the statements simpler, we will always assume a symmetric monoidal structure.
\end{remark}

\subsection{Duality}

Given a vector space $V$ over $k$, the dual vector space $\dual{V}:=\hom_k(V,k)$ comes with an evaluation map
$$\ev\colon \dual{V}\otimes_k V\to k.$$
If moreover $V$ is finite dimensional, with basis $e_1,\dots,e_n$, then we can construct a coevaluation map
$$\coev\colon k\to V\otimes_k \dual{V}$$
which chooses the identity matrix $\sum_{i=1}^ne_i\otimes \dual{e_i}\in V\otimes_k \dual{V}$, and the evaluation and coevaluation maps satisfy certain zigzag identities.
In fact, the finite dimensional vector spaces are precisely characterized by the existence of such structure.

Fix a symmetric monoidal 1-category $\cC$.
This notion of duality has been generalized to objects in $\cC$, obtaining a certain analog of ``finite'' objects.

\begin{definition}
  An object $X\in \cC$ is called \emph{dualizable} if there exist:
  \begin{enumerate}
      \item  An object $\dual{X} \in \cC$ called the \emph{dual},
      \item maps called \emph{evaluation} and \emph{coevaluation} respectively
      $$\ev \colon \dual{X}\otimes X\to \one
      \hspace{60pt}
      \coev \colon \one \to X\otimes \dual{X},$$
      \item 2-isomorphisms called the \emph{zigzag identities}
      \[\begin{tikzcd}
          X \arrow[r, "\coev\otimes \id"] \arrow[rd, equal] & X\otimes X^\vee\otimes X \arrow[d, "\id\otimes\ev"] &  & X^\vee \arrow[r, "\id\otimes\coev"] \arrow[rd, equal] & X^\vee\otimes X\otimes X^\vee \arrow[d, "\ev\otimes\id"] \\
          & X    &  &    & X^\vee.   
          \end{tikzcd}\]
  \end{enumerate}
\end{definition}

If $X\in \cC$ is dualizable, then the space of duality data as above is contractible, hence we may speak of \emph{the} dual, evaluation and coevaluation.
Additionally, if we specify an object $\dual{X}$ and an evaluation map $\ev\colon \dual{X}\otimes X\to \one$ (respectively, a coevaluation map $\coev\colon \one\to X\otimes\dual{X}$), then the space of extensions to a complete duality data is contractible (\cite[Lemma 4.6.1.10]{ha}); this is not necessarily true if we specify only $\dual{X}$ or both $\ev$ and $\coev$.
By symmetry, if $\dual{X}$ is the dual of $X$ then $X$ is the dual of $\dual{X}$. 

\begin{definition}
  Given a map $f\colon X\to Y$ between dualizable objects $X,Y\in \cC$, the \emph{transpose} map $\trans{f}\colon \dual{Y}\to\dual{X}$ is defined as the composition
  $$\dual{Y}\oto{\coev_X}\dual{Y}\otimes X\otimes \dual{X}\oto{f}\dual{Y}\otimes Y\otimes \dual{X}\oto{\ev_Y}\dual{X}.$$
\end{definition}

The full subcategory $\cC^\dbl\subseteq\cC$ on dualizable objects is closed under tensor products, and there is a symmetric monoidal involution $\cC^\dbl\isoto (\cC^\dbl)^\op$ given by taking duals of objects and transposes of maps (\cite[Proposition 3.2.4]{lurie2018ellipticI}).

The notion of duality can also be generalized to the context of bimodules.
Assume for now that $\cC$ has geometric realizations that commute with the tensor product in each variable, and let $A,B,C\in \Alg(\cC)$ be algebras.
Given bimodules  $M\in \Bmod[A]{B}(\cC)$ and $N\in \Bmod[B]{C}(\cC)$, their \emph{relative tensor product} over $B$
$$M\otimes_B N\in \Bmod[A]{C}(\cC)$$
is the colimit of the bar complex:
$$M\otimes_B N:=\colim(\cdots\begin{tikzcd}
	{M\otimes B\otimes B\otimes N} & {M\otimes B\otimes N} & {M\otimes N}
	\arrow[from=1-1, to=1-2]
	\arrow[shift left=2, from=1-1, to=1-2]
	\arrow[shift right=2, from=1-1, to=1-2]
	\arrow[shift right, from=1-2, to=1-3]
	\arrow[shift left, from=1-2, to=1-3]
\end{tikzcd}).$$
We expand on this construction in \cref{d:bar-construction}.

\begin{definition}
    Suppose $\cC$ has geometric realizations that commute with the tensor product in each variable.
    A bimodule $M\in \Bmod[A]{B}(\cC)$ is \emph{right dualizable} if there exist:
    \begin{enumerate}
        \item  a bimodule $N\in \Bmod[B]{A}(\cC)$ called the \emph{right dual},
        \item evaluation and coevaluation maps
        $$\ev \colon N\otimes_A M\to B \in \Bmod[B]{B}(\cC)
        \hspace{40pt}
        \coev \colon A \to M\otimes_B N\in \Bmod[A]{A}(\cC),$$
        \item zigzag identities
        \[\begin{tikzcd}[column sep=large]
          M & {M\otimes_B N\otimes_A M} && N & {N\otimes_A M\otimes_B N} \\
          & M &&& {N.}
          \arrow["{\coev\otimes_A \id}", from=1-1, to=1-2]
          \arrow[equals, from=1-1, to=2-2]
          \arrow["{\id\otimes_B\ev}", from=1-2, to=2-2]
          \arrow["{\id\otimes_A\coev}", from=1-4, to=1-5]
          \arrow[equals, from=1-4, to=2-5]
          \arrow["{\ev\otimes_B\id}", from=1-5, to=2-5]
        \end{tikzcd}\]
    \end{enumerate}
\end{definition}

We are specifically interested in the case where $M\in \LMod_A(\cC)$ has a right dual $N\in \RMod_A(\cC)$, in which case the evaluation and coevaluation are of the form
$$\ev \colon N\otimes_A M\to \one \in \cC
\hspace{40pt}
\coev \colon A \to M\otimes N\in \Bmod[A]{A}(\cC).$$
The following lemma relates duality of modules to regular duality:

\begin{lemma}[{\cite[Proposition 4.6.2.13]{ha}}]\label{l:dual-iff-bmod-dual}
  Suppose $\cC$ has geometric realizations that commute with the tensor product in each variable.
  For a left $A$-module $M\in\LMod_A(\cC)$, the following are equivalent:
  \begin{enumerate}
    \item $M$ is right dualizable in $\LMod_A(\cC)$.
    \item For every algebra map $B\to A$, the restriction of scalars $M\in \LMod_B(\cC)$ is right dualizable.
    \item $M$ is dualizable in $\cC$.
  \end{enumerate}
\end{lemma}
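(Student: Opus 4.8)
The plan is to prove the cyclic chain of implications $(1)\Rightarrow(2)\Rightarrow(3)\Rightarrow(1)$, using the relative tensor product and the bar construction as the main computational tools. First I would observe that $(2)\Rightarrow(3)$ is immediate: take $B=\one$, so that restriction of scalars along $\one\to A$ is just the forgetful functor $\LMod_A(\cC)\to\cC$, and $M\in\LMod_\one(\cC)\simeq\cC$ being right dualizable is exactly dualizability in $\cC$. The implication $(1)\Rightarrow(2)$ is almost as formal: restriction of scalars along an algebra map $B\to A$ is a lax symmetric monoidal functor $\LMod_A(\cC)\to\LMod_B(\cC)$ that is compatible with relative tensor products (this is where I use that geometric realizations commute with $\otimes$, so the bar complexes computing $M\otimes_A N$ map to those computing $M\otimes_B N$, and in fact the functor preserves these colimits), so it carries the evaluation/coevaluation data and zigzag identities witnessing right dualizability of $M$ over $A$ to the corresponding data over $B$.

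The substance is in $(3)\Rightarrow(1)$. Suppose $M$ is dualizable in $\cC$ with dual $\dual M$, evaluation $\ev_\cC\colon\dual M\otimes M\to\one$, and coevaluation $\coev_\cC\colon\one\to M\otimes\dual M$. The candidate right dual in $\RMod_A(\cC)$ should be $N:=\dual M\otimes A$ with its evident free right $A$-module structure; more invariantly, $N$ is the image of $\dual M$ under the free functor $\cC\to\RMod_A(\cC)$, which is left adjoint to the forgetful functor. The key point is that for a left $A$-module $M$, the relative tensor product $N\otimes_A M$ can be computed: since $N=\dual M\otimes A$ is free, $N\otimes_A M\simeq\dual M\otimes M$, so the ordinary evaluation $\ev_\cC$ furnishes the relative evaluation $\ev\colon N\otimes_A M\to\one$. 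For the coevaluation $\coev\colon A\to M\otimes N$, one must produce a map of $A$-$A$-bimodules; here $M\otimes N\simeq M\otimes\dual M\otimes A$, and the relevant map is adjoint to $\coev_\cC$ under the free-forgetful adjunction for bimodules, using the $A$-action on $M$ to absorb the left $A$-structure. Then I would verify the two zigzag identities by reducing them, via the free-module simplifications above, to the zigzag identities for $(\ev_\cC,\coev_\cC)$ in $\cC$, together with associativity/unitality of the $A$-action on $M$.

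The main obstacle I anticipate is not the abstract shape of the argument but the bookkeeping in $(3)\Rightarrow(1)$: making precise the bimodule structure on $N=\dual M\otimes A$ and on $M\otimes N$, checking that $\coev$ is genuinely a map of $A$-$A$-bimodules (not merely a map in $\cC$), and carrying out the zigzag verifications coherently at the $\infty$-categorical level rather than just on objects. Doing this by hand would require a careful analysis of the bar complex and the module-action simplicial diagrams; the cleaner route, which I would prefer, is to package $N\mapsto N\otimes_A(-)$ as a functor $\RMod_A(\cC)\to\Fun(\LMod_A(\cC),\cC)$ and recognize right dualizability of $M$ in $\LMod_A(\cC)$ as an adjunction between $N\otimes_A(-)$ and $M\otimes_{(-)}$ internal to an appropriate 2-category of module categories, thereby deducing the zigzags from uniqueness of adjoints. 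Either way, the content is that freeness collapses the relative tensor products, and everything else is transport of the duality datum already present in $\cC$.
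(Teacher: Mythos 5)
First, a point of comparison: the paper does not prove this lemma at all --- it is quoted verbatim from \cite[Proposition 4.6.2.13]{ha} --- so your proposal is measured against the standard argument rather than anything in the text. Your implications $(2)\Rightarrow(3)$ (take $B=\one$) and $(1)\Rightarrow(2)$ are fine in outline, although the phrase ``lax symmetric monoidal functor $\LMod_A(\cC)\to\LMod_B(\cC)$'' is off target: these are not monoidal categories and the duality in question is not internal to them; what you actually need is to transport the data along the comparison map $N\otimes_B M\to N\otimes_A M$ and along $B\to A$ (equivalently, compose with the right-dualizable bimodule ${}_BA_A$ in the Morita bicategory), which does work.

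The genuine gap is in $(3)\Rightarrow(1)$: your candidate right dual $N:=\dual{M}\otimes A$ is wrong. Right duals in this sense are adjoints in the bicategory of algebras and bimodules (composition being relative tensor product, which exists under the standing hypothesis), hence unique up to isomorphism, and by \cite[Remark 4.6.2.12]{ha} --- quoted in the paper immediately after this lemma --- the underlying object of the right dual of $M$ must be $\dual{M}$ itself, not $\dual{M}\otimes A$. A concrete counterexample: take $\cC=\mathrm{Vect}_k$, $A$ a finite-dimensional algebra with $\dim_k A\geq 2$, and $M=A$ as a left module. Its right dual is $\dual{A}$ with the transposed right action (of dimension $\dim A$), whereas $\dual{M}\otimes A$ has dimension $(\dim A)^2$; so no choice of evaluation and coevaluation can make your $N$ work. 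The symptom already appears in your own sketch: the map $\coev_\cC\otimes\id_A\colon A\to M\otimes\dual{M}\otimes A$ is right $A$-linear but not left $A$-linear, and $A$ is not a free bimodule on a right module, so the ``free--forgetful adjunction'' cannot supply the required bimodule structure; the free module $\dual{M}\otimes A$ is instead the right dual of the \emph{free} left module $A\otimes M$. The correct construction --- and the real content of the implication --- is to equip $\dual{M}$ itself with the right $A$-action obtained as the mate (transpose) of the left action on $M$, let $\ev$ descend from $\ev_\cC$ along $\dual{M}\otimes M\to\dual{M}\otimes_A M$, and take $\coev$ to be $A\to M\otimes\dual{M}$ obtained by acting after $\coev_\cC$; producing this coherently and verifying the zigzags is exactly the work done in Lurie's proof, and it is what the paper later extracts as \cref{c:dual-of-lmod-is-rmod}.
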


Explicitly, if $M\in \LMod_A(\cC)$ has a right dual $N\in \RMod_A(\cC)$, then by \cite[Remark 4.6.2.12]{ha} the dual of $M$ in $\cC$ is the underlying object of $N$ and the evaluation and coevaluation maps are given by the compositions
$$N\otimes M\to N\otimes_A M\to \one,
\hspace{40pt}
\one\to A\to  M\otimes N.$$
While we needed geometric realizations to set up the theory of duality in bimodules, the conclusion that $\dual{M}$ has a right $A$-module structure can be phrased without them.
By passing to the presheaf category, we may drop the assumption on geometric realizations.

\begin{observation}\label{o:add-geometric-realizations}
  Consider the symmetric monoidal Yoneda embedding $\cC\hookrightarrow \PSh(\cC)$, where the symmetric monoidal structure on the category of presheaves is given by the Day convolution \cite{glasman2016day,ha}.
  For $A,B\in \Alg(\cC)$, we get a fully faithful embedding 
  $$\Bmod[A]{B}(\cC)\hookrightarrow\Bmod[A]{B}(\PSh(\cC)).$$
  In contexts involving $A$-$B$-bimodules in $\cC$, we may use this embedding to assume that $\cC$ has geometric realizations that commute with the tensor product in each variable.
\end{observation}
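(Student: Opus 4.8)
The plan is to deduce the statement from two standard facts about Day convolution, together with the stability of full faithfulness under the relevant operations. First I would recall, following \cite{glasman2016day} (see also \cite{ha}), that Day convolution makes $\PSh(\cC)$ a presentably symmetric monoidal category for which the Yoneda embedding $j\colon\cC\hookrightarrow\PSh(\cC)$ is symmetric monoidal, and that $j$ is in particular fully faithful. Being symmetric monoidal, $j$ carries an algebra $A\in\Alg(\cC)$ to an algebra in $\PSh(\cC)$, which I will abusively continue to denote $A$, and it induces compatible functors on all of the operad-algebra categories in play.

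The key point is a single full faithfulness statement: for every symmetric monoidal $1$-category $\cE$, a fully faithful symmetric monoidal functor $F\colon\cD\to\cD'$ induces a fully faithful functor $\Funo(\cE,\cD)\to\Funo(\cE,\cD')$. I would prove this by observing that the essential image of $F$ is closed under the tensor product and contains the unit, hence is a symmetric monoidal full subcategory of $\cD'$; consequently a symmetric monoidal functor $\cE\to\cD'$ whose underlying functor factors through $\cD$ admits an essentially unique lift through $F$, so that $\Mapo(\cE,\cD)\to\Mapo(\cE,\cD')$ is a monomorphism. Running this through the universal property $\Map(\cW,\Funo(\cE,-))\simeq\Mapo(\cE,\Fun(\cW,-))$ --- applied with $\cD,\cD'$ replaced by the pointwise-monoidal diagram categories $\Fun(\cW,\cD)\to\Fun(\cW,\cD')$, which are again fully faithful and symmetric monoidal --- then shows that $\Map(\cW,\Funo(\cE,\cD))\to\Map(\cW,\Funo(\cE,\cD'))$ is a monomorphism for every $\cW\in\Cat_1$, so the map of $\Funo$'s is a monomorphism in $\Cat_1$, in particular fully faithful. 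Applying this to $F=j$ with $\cE=\Env(\EE_1)$ and with $\cE=\Env(\BM)$ (see \cref{e:bimodule-envelope}) gives that $\Alg(\cC)\to\Alg(\PSh(\cC))$ and $\BMod(\cC)\to\BMod(\PSh(\cC))$ are fully faithful. I expect this to be the only step that requires genuine care, although it is quite routine: it is the same mechanism by which a symmetric monoidal full subcategory passes to categories of algebras over an arbitrary operad (cf.\ \cite{ha}).

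It then remains to assemble. Recalling that $\Bmod[A]{B}(\cC)=\{A\}\times_{\Alg(\cC)}\BMod(\cC)\times_{\Alg(\cC)}\{B\}$, the functor $j$ induces a map from this iterated pullback to $\{A\}\times_{\Alg(\PSh(\cC))}\BMod(\PSh(\cC))\times_{\Alg(\PSh(\cC))}\{B\}=\Bmod[A]{B}(\PSh(\cC))$ which is fully faithful at every vertex of the two pullback diagrams. Since mapping spaces in a limit of $\infty$-categories are computed as the limit of the mapping spaces, fully faithful functors are closed under limits, whence $\Bmod[A]{B}(\cC)\hookrightarrow\Bmod[A]{B}(\PSh(\cC))$ is fully faithful. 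Finally, $\PSh(\cC)$ is presentable, so it admits all small colimits --- in particular geometric realizations --- and its Day convolution tensor product preserves small colimits separately in each variable, so geometric realizations in $\PSh(\cC)$ commute with the tensor product in each variable. Because the displayed embedding is fully faithful, any property of $A$-$B$-bimodules in $\cC$ whose validity is reflected by fully faithful functors may be verified after passing to $\PSh(\cC)$, which is precisely the content of the final clause.
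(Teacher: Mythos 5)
Your overall strategy is sound, and it is essentially the argument the paper leaves implicit (the statement is recorded as an observation with no written proof): Yoneda with Day convolution is symmetric monoidal and fully faithful, this propagates to operadic algebra categories via the envelopes, $\Bmod[A]{B}$ is carved out by pullbacks along which full faithfulness is stable, and $\PSh(\cC)$ is presentable with a tensor product commuting with colimits in each variable. However, the one step you flag as the crux is also where the written argument fails: from the fact that $\Map(\cW,\Funo(\cE,\cD))\to\Map(\cW,\Funo(\cE,\cD'))$ is a monomorphism of spaces for every $\cW$ you conclude that $\Funo(\cE,\cD)\to\Funo(\cE,\cD')$ is a monomorphism in $\Cat_1$, ``in particular fully faithful.'' Monomorphisms of $\infty$-categories are (replete, not necessarily full) subcategory inclusions, and they need not be fully faithful: the inclusion of the core $\cD^{\simeq}\hookrightarrow\cD$ is a monomorphism but is not full. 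So testing monomorphy of $\Map(\cW,-)$ against all $\cW$ can never, by itself, deliver what you actually need here, namely fullness on (symmetric monoidal) natural transformations between algebras of $\Env(\BM)$ valued in $\cC$.

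The gap is fixable with ingredients you already have, but it requires the lifting argument in a relative form rather than the monomorphism statement. Full faithfulness of $\Funo(\cE,\cD)\to\Funo(\cE,\cD')$ is the assertion that the square comparing $\Map(\Delta^1,-)$ with $\Map(\partial\Delta^1,-)$ is cartesian. Under the universal property, $\Map(\Delta^1,\Funo(\cE,\cD))\simeq\Mapo(\cE,\Fun(\Delta^1,\cD))$, so one must show: a symmetric monoidal functor $\cE\to\Fun(\Delta^1,\cD')$ whose two endpoint restrictions lift (as symmetric monoidal functors) to $\cD$ lifts essentially uniquely to $\Fun(\Delta^1,\cD)$. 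This follows because $\Fun(\Delta^1,\cD)\to\Fun(\Delta^1,\cD')$ is fully faithful with essential image the arrows whose endpoints lie in the image of $\cD$ --- here one uses fullness of $\cD\hookrightarrow\cD'$ to know the connecting arrow itself comes from $\cD$ --- after which your essential-image/unique-lift argument applies verbatim to this fully faithful symmetric monoidal functor. Equivalently, and perhaps more cleanly, prove once that $\Funo(\cE,\cD)\simeq\Funo(\cE,\cD')\times_{\Fun(\cE,\cD')}\Fun(\cE,\cD)$ and deduce full faithfulness from that of $\Fun(\cE,\cD)\to\Fun(\cE,\cD')$. With this repaired, the rest of your proposal (the pullback assembly for $\Bmod[A]{B}$, and the colimit properties of Day convolution on $\PSh(\cC)$) goes through as written.
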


Using the above observation, we get the following corollary of \cref{l:dual-iff-bmod-dual}:

\begin{corollary}\label{c:dual-of-lmod-is-rmod}
  Let $A\in \Alg(\cC)$ and suppose $M\in \LMod_A(\cC)$ is such that $M$ is dualizable in $\cC$.
  Then $\dual{M}$ has a natural structure of a right $A$-module.
\end{corollary}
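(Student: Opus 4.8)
The plan is to reduce, via \cref{o:add-geometric-realizations}, to the situation covered by \cref{l:dual-iff-bmod-dual}, and then to descend the resulting module structure back to $\cC$.

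By \cref{o:add-geometric-realizations} we may regard $\cC$ as a full symmetric monoidal subcategory of $\PSh(\cC)$, the latter having geometric realizations that commute with the tensor product in each variable. The inclusion carries $M$ to a left $A$-module in $\PSh(\cC)$, and, being strong symmetric monoidal, it carries the duality datum witnessing the dualizability of $M$ in $\cC$ to one witnessing the dualizability of $M$ in $\PSh(\cC)$, with dual $\dual{M}$. Applying the equivalence between conditions (1) and (3) of \cref{l:dual-iff-bmod-dual} inside $\PSh(\cC)$, we conclude that $M$ is right dualizable in $\LMod_A(\PSh(\cC))$, with right dual some $N\in\RMod_A(\PSh(\cC))$. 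By \cite[Remark 4.6.2.12]{ha}, the underlying object of $N$ is the dual of $M$ in $\PSh(\cC)$, namely $\dual{M}$. Hence $\dual{M}$ acquires a right $A$-module structure in $\PSh(\cC)$, and this structure is natural in $M$, being assembled out of the functorial constructions just used.

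It remains to descend this structure to $\cC$. The inclusion $\RMod_A(\cC)\hookrightarrow\RMod_A(\PSh(\cC))$ — obtained from the fully faithful embedding of \cref{o:add-geometric-realizations} by setting the left algebra to the unit — is fully faithful, and a right $A$-module in $\PSh(\cC)$ lies in its essential image precisely when its underlying object lies in $\cC$ (the right algebra being already $A\in\cC$). Since $\dual{M}\in\cC$, the right $A$-module structure on $\dual{M}$ constructed above is the image of an essentially unique right $A$-module structure on $\dual{M}$ in $\cC$, which is what we wanted.

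The step requiring the most care is the last one: one must know that membership in $\cC$ can be checked at the level of underlying objects, i.e. that the square relating $\RMod_A(\cC)$, $\RMod_A(\PSh(\cC))$, $\cC$, and $\PSh(\cC)$ is a pullback. This is a general feature of algebras over an operad along a fully faithful symmetric monoidal functor, and it is what guarantees that the action maps and all their coherences automatically lie in $\cC$ once the underlying objects do.
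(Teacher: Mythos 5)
Your proof is correct and follows exactly the route the paper intends: pass to $\PSh(\cC)$ via \cref{o:add-geometric-realizations}, apply the equivalence (1)$\Leftrightarrow$(3) of \cref{l:dual-iff-bmod-dual} together with \cite[Remark 4.6.2.12]{ha} to identify the underlying object of the right dual with $\dual{M}$, and descend along the fully faithful embedding of module categories. Your closing remark that membership in $\cC$ is detected on underlying objects is the right justification for the descent step, and it holds here since the Yoneda embedding is fully faithful and strong symmetric monoidal, so its essential image is closed under tensor products.
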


Similarly, the transpose of a left $A$-module map between dualizable modules has the structure of a right $A$-module map:

\begin{lemma}\label{l:trans-is-rmod}
  Let $A\in \Alg(\cC)$ and consider $f\colon M\to N\in \LMod_A(\cC)$ such that $M$ and $N$ are dualizable in $\cC$, then $\trans{f}\colon \dual{N}\to \dual{M}$ has a natural structure of a right $A$-module map.
\end{lemma}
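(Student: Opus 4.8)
The plan is to exhibit $\trans f$ as the \emph{mate} of $f$ with respect to the duality adjunctions produced by \cref{l:dual-iff-bmod-dual}, and then to check that this mate has the expected underlying map in $\cC$. First, by \cref{o:add-geometric-realizations} we may replace $\cC$ by $\PSh(\cC)$ and thereby assume that $\cC$ has geometric realizations commuting with the tensor product in each variable; this is harmless because the symmetric monoidal Yoneda embedding induces a fully faithful embedding $\LMod_A(\cC)\hookrightarrow \LMod_A(\PSh(\cC))$, so a map in $\LMod_A(\cC)$ is a right $A$-module map precisely when its image is one. Under this assumption, \cref{l:dual-iff-bmod-dual} makes $M$ and $N$ right dualizable in $\LMod_A(\cC)$, with right duals $\dual M,\dual N\in\RMod_A(\cC)$ whose underlying objects are the $\cC$-duals and whose evaluation and coevaluation are $\ev\colon \dual M\otimes_A M\to\one$ and $\coev\colon A\to M\otimes\dual M$.

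I would then work in the Morita bicategory of $\cC$, whose objects are algebras, whose $1$-morphisms $A\to B$ are $A$-$B$-bimodules composed via the relative tensor product, and whose $2$-morphisms are bimodule maps; this is available once $\cC$ has the relevant geometric realizations. A left $A$-module is a $1$-morphism $A\to\one$ and a right $A$-module is a $1$-morphism $\one\to A$, and right dualizability of $M$ is exactly the statement that $M$ admits a right adjoint $\dual M$ with unit $\coev$ and counit $\ev$. Since $f\colon M\to N$ is a left $A$-module map, it is a $2$-morphism between the $1$-morphisms $M,N\colon A\to\one$, both of which have right adjoints; its mate is the composite
$$\dual N \xRightarrow{\coev_M} \dual N\otimes_A M\otimes\dual M \xRightarrow{\dual N\otimes_A f\otimes\dual M} \dual N\otimes_A N\otimes\dual M \xRightarrow{\ev_N} \dual M,$$
which is by construction a $2$-morphism between the $1$-morphisms $\dual N,\dual M\colon \one\to A$, that is, a right $A$-module map. (Alternatively, one can phrase this as functoriality of the formation of right duals, in the spirit of \cite[Proposition 3.2.4]{lurie2018ellipticI}.)

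It remains to identify the underlying map in $\cC$ of this mate with the transpose $\trans f$ as originally defined. This is the one genuine computation: by \cite[Remark 4.6.2.12]{ha} the $\cC$-level coevaluation $\one\to M\otimes\dual M$ factors as $\one\to A\xrightarrow{\coev_M} M\otimes\dual M$ through the unit of $A$, and the $\cC$-level evaluation $\dual N\otimes N\to\one$ factors as $\dual N\otimes N\to\dual N\otimes_A N\xrightarrow{\ev_N}\one$ through the canonical map to the $A$-relative tensor product. Substituting these factorizations into the definition of $\trans f$ in $\cC$ and using that $f$ commutes with the $A$-action, the composite $\dual N\to\dual M$ rewrites as the mate displayed above. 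I expect this final diagram chase — tracking the $A$-actions through the bar constructions defining the relative tensor products — to be the main, and essentially only, obstacle; the rest is formal.
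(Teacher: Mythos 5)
Your proposal is correct and follows essentially the same route as the paper: reduce via \cref{o:add-geometric-realizations} to the case with geometric realizations, use \cref{l:dual-iff-bmod-dual} to regard $\dual{M},\dual{N}$ as right $A$-modules, and identify $\trans{f}$ with the bimodule-level transpose (your ``mate''), which is manifestly a composite of right $A$-module maps. Your final diagram chase using the factorizations of evaluation and coevaluation from \cite[Remark 4.6.2.12]{ha} is precisely the commuting diagram displayed in the paper's proof; the Morita-bicategory framing is just a repackaging of the same argument.
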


\begin{proof}
  Use \cref{o:add-geometric-realizations} to assume that $\cC$ has geometric realizations that commute with the tensor product in each variable.
  The result then follows from the fact that the transpose of $f$ induced from the duality in $\cC$ agrees with the transpose induced from the duality of bimodules.
  Explicitly, the following diagram commutes:
  \[\begin{tikzcd}[column sep=huge]
    {\dual{N}} & {\dual{N}\otimes M\otimes\dual{M}} & {\dual{N}\otimes N\otimes\dual{M}} & {\dual{M}} \\
    {\dual{N}\otimes_A A} & {\dual{N}\otimes_A M\otimes\dual{M}} & {\dual{N}\otimes_A N\otimes\dual{M}} & {\dual{M},}
    \arrow[from=1-1, to=1-2]
    \arrow["\sim"', from=1-1, to=2-1]
    \arrow["f", from=1-2, to=1-3]
    \arrow[from=1-2, to=2-2]
    \arrow[from=1-3, to=1-4]
    \arrow[from=1-3, to=2-3]
    \arrow[equals, from=1-4, to=2-4]
    \arrow[from=2-1, to=2-2]
    \arrow["f"', from=2-2, to=2-3]
    \arrow[from=2-3, to=2-4]
  \end{tikzcd}\]
  where the top row is the transpose of $f$ and the bottom row is a composition of right $A$-module maps.
\end{proof}

\subsection{Frobenius algebras}

Frobenius algebras are self-dual algebras in which the evaluation map arises from the multiplication.
As an example, consider $M_n(k)$ the algebra of $n\times n$ matrices over a field $k$.
Composing the multiplication of matrices with the trace map
$$M_n(k)\otimes_k M_n(k)\to M_n(k)\oto{\tr} k$$
produces an evaluation map exhibiting $M_n(k)$ as self-dual.

\begin{definition}
  A \emph{Frobenius algebra} in $\cC$ is an algebra $A\in \Alg(\cC)$ together with a map $\epsilon \colon A\to \one$, such that the composition
  $$A\otimes A\xrightarrow{\mu}A\xrightarrow{\epsilon}\one$$
  is an evaluation exhibiting $A$ as self-dual, where $\mu$ is the multiplication of $A$.
\end{definition}

\begin{example}
  Beside the example of matrices given above, another class of examples for Frobenius algebras comes from the 1-category of (co)spans, see \cref{r:cospan-can-is-frob}.
\end{example}

An algebra $A\in \Alg(\cC)$ can be viewed as both a left and a right module over itself.
A Frobenius algebra is then precisely a self-dual algebra such that the dual of $A$ as a left $A$-module is $A$ as a right $A$-module:

\begin{lemma}[{\cite[Proposition 4.6.5.2]{ha}}]\label{l:frob-is-mod-dual}
    Suppose $\cC$ has geometric realizations that commute with the tensor product in each variable.
    Given an algebra $A\in \Alg(\cC)$ and a map $\epsilon\colon A\to \one$, the following are equivalent:
    \begin{enumerate}
        \item $(A,\epsilon)$ is a Frobenius algebra.
        \item Considering $\epsilon$ as a map $A\otimes_A A\to \one$, it is an evaluation exhibiting $A\in \RMod_A(\cC)$ as right dual to $A\in \LMod_A(\cC)$.
    \end{enumerate}
\end{lemma}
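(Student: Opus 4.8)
Proof proposal for Lemma \ref{l:frob-is-mod-dual} (the statement that $(A,\epsilon)$ is Frobenius iff $\epsilon\colon A\otimes_A A\to\one$ exhibits $A\in\RMod_A(\cC)$ as right dual to $A\in\LMod_A(\cC)$).

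This is cited from \cite[Proposition 4.6.5.2]{ha}, so strictly one may just invoke it; but here is how I would prove it directly from the material already assembled in the excerpt. The plan is to run through the dictionary provided by \cref{l:dual-iff-bmod-dual} and the explicit formulas following it. The key observation is that for the \emph{regular} left module $A\in\LMod_A(\cC)$, the underlying object is just $\one$-dualizable in a trivial way: $A$ is self-dual in $\cC$ precisely when it is dualizable, and by \cref{c:dual-of-lmod-is-rmod} the dual $\dual A$ then automatically carries a right $A$-module structure. So one direction is immediate: if $(A,\epsilon)$ is Frobenius, then $A$ is dualizable in $\cC$ with evaluation $\epsilon\circ\mu$, hence by \cref{l:dual-iff-bmod-dual} ($3\Rightarrow 1$) $A\in\LMod_A(\cC)$ is right dualizable, and the explicit description of the bimodule evaluation (the composite $N\otimes M\to N\otimes_A M\to\one$ recorded after \cref{l:dual-iff-bmod-dual}) shows that the induced map $A\otimes_A A\to\one$ is exactly the factorization of $\epsilon\circ\mu$ through $A\otimes_A A\simeq A$, i.e.\ $\epsilon$ itself. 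Conversely, if $\epsilon\colon A\otimes_A A\to\one$ is an evaluation for $A\in\RMod_A$ against $A\in\LMod_A$, then $A$ is right dualizable in $\LMod_A(\cC)$, so by \cref{l:dual-iff-bmod-dual} ($1\Rightarrow 3$) $A$ is dualizable in $\cC$; again by the explicit formula the evaluation in $\cC$ is $A\otimes A\to A\otimes_A A\xrightarrow{\epsilon}\one$, and since $A\otimes A\to A\otimes_A A$ is identified with $\mu$ under $A\otimes_A A\simeq A$, this evaluation is $\epsilon\circ\mu$, which is the definition of $(A,\epsilon)$ being Frobenius.

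The one genuine point to pin down — and the step I expect to be the main obstacle — is the identification of the canonical map $A\otimes A\to A\otimes_A A$ with the multiplication $\mu$ under the equivalence $A\otimes_A A\simeq A$ (and, on the coevaluation side, of $\one\to A$ with the unit under the same identification). This is where the hypothesis that $\cC$ has geometric realizations commuting with $\otimes$ in each variable is used: it guarantees that the bar complex computing $A\otimes_A A$ has colimit $A$ with the augmentation being $\mu$, via the extra degeneracy / split simplicial object argument. Once this identification is in hand, both directions are just unwinding the formulas from \cref{l:dual-iff-bmod-dual}, and the zigzag identities match up on the nose because the bimodule zigzags map to the $\cC$-zigzags under the (symmetric monoidal, colimit-preserving) relative-tensor functors, exactly as in the proof of \cref{l:trans-is-rmod}.

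I would organize the writeup as: (i) recall that $A\otimes_A A\simeq A$ with the augmentation $A\otimes A\to A\otimes_A A$ corresponding to $\mu$ and $\one\to A\to M\otimes N$ on the coevaluation side corresponding to the unit of $A$; (ii) deduce the ``$1\Rightarrow 2$'' direction from \cref{l:dual-iff-bmod-dual} together with the explicit evaluation formula; (iii) deduce ``$2\Rightarrow 1$'' symmetrically, reading the same formula backwards. No step requires more than tracing the definitions, so I would keep it to a short paragraph in the paper and reference \cite{ha} for the coherence details.
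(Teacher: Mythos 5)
The paper does not actually prove this lemma: it is quoted verbatim from \cite[Proposition 4.6.5.2]{ha}, so any complete direct argument you give is necessarily a different route from the paper's. Your direction $(2)\Rightarrow(1)$ is sound: the explicit formulas recorded after \cref{l:dual-iff-bmod-dual} convert a bimodule duality datum into a duality datum in $\cC$ whose evaluation is $A\otimes A\to A\otimes_A A\xrightarrow{\epsilon}\one$, and identifying $A\otimes A\to A\otimes_A A\simeq A$ with $\mu$ (the bar complex is split) gives $\epsilon\mu$, i.e.\ the Frobenius condition. (A minor correction: that identification does not need the hypothesis on geometric realizations, since split simplicial objects admit colimits in any category preserved by any functor; the hypothesis is what makes the relative tensor products and the bimodule duality theory of \cref{l:dual-iff-bmod-dual} available at all.)

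The genuine gap is in $(1)\Rightarrow(2)$. From dualizability of $A$ in $\cC$, \cref{l:dual-iff-bmod-dual} only yields the existence of \emph{some} right dual $N$ of $A\in\LMod_A(\cC)$, whose underlying object is $\dual{A}$ and whose right $A$-action is the dual action of \cref{c:dual-of-lmod-is-rmod}. Statement (2) asserts strictly more: that the specific pairing $\epsilon\colon A\otimes_A A\to\one$, with $A$ carrying its \emph{right regular} action, is an evaluation. The explicit description after \cref{l:dual-iff-bmod-dual} goes from bimodule duality data to $\cC$-duality data, not conversely, so it cannot be ``read backwards'' as you do when you conclude that the induced bimodule evaluation ``is exactly $\epsilon$ itself''; that is the statement to be proved, not something the cited lemma supplies. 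What is missing is precisely the substantive content of \cite[Proposition 4.6.5.2]{ha}: either show that the self-duality $A\simeq\dual{A}$ determined by $\epsilon\mu$ intertwines the right regular action with the dual of the left regular action (this is where associativity of $\mu$, i.e.\ coherent ``balancedness'' of the pairing $\epsilon\mu$, enters), or invoke the standard criterion that, once one right dual $(N_0,e_0,c_0)$ is known, a pairing $e\colon N\otimes_A A\to\one$ is an evaluation if and only if the comparison map $N\to N_0$ it induces via $c_0$ is an equivalence; here the underlying map of that comparison is the map $A\to\dual{A}$ adjoint to $\epsilon\mu$, hence an equivalence by hypothesis (1), and equivalences of module maps are detected on underlying objects. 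Either patch completes your argument, but as written the key step is asserted rather than proved, and it, rather than the identification of $A\otimes A\to A\otimes_A A$ with $\mu$ that you flag, is the real obstacle.
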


Elaborating on condition (2) above, it is equivalent to having a coevaluation map, which we denote
$$\delta\colon A\to A\otimes A \in \Bmod[A]{A}(\cC),$$
together with zigzag identities
$$(\epsilon\otimes \id_A)\delta\simeq \id_A\simeq (\id_A\otimes \epsilon)\delta.$$
Notice that the only relative tensor products we needed to phrase $\epsilon$ and $\delta$ were trivial, and they exist universally.
Thus, we may use \cref{o:add-geometric-realizations} to drop the assumption on geometric realizations.

\begin{corollary}\label{c:frob-equiv-explicit}
  For an algebra $A\in \Alg(\cC)$ with a map $\epsilon\colon A\to \one$, the following are equivalent:
  \begin{enumerate}
    \item $(A,\epsilon)$ is a Frobenius algebra.
    \item There exists a map $\delta\colon A\to A\otimes A\in \Bmod[A]{A}(\cC)$ and 2-isomorphisms
    $$(\epsilon\otimes \id_A)\delta\simeq \id_A\simeq (\id_A\otimes \epsilon)\delta.$$
  \end{enumerate}
\end{corollary}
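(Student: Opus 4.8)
The plan is to deduce this from \cref{l:frob-is-mod-dual} by applying the Yoneda embedding trick of \cref{o:add-geometric-realizations}. The only subtlety is bookkeeping: \cref{l:frob-is-mod-dual} characterizes a Frobenius structure in terms of a right duality of $A$ over itself, whereas the corollary phrases it in terms of the explicit data $(\delta, \text{zigzags})$ with all relative tensor products trivial; the work is to see these match and that neither side is affected by enlarging $\cC$. First I would record the elementary observation that the relative tensor products appearing in the statement — namely $A\otimes_A A$ (the source of $\epsilon$ viewed as a bimodule map) and the targets of the zigzag composites — are all of the form $A\otimes_A(\text{-})$ or $(\text{-})\otimes_A A$, and these are \emph{trivial} bar constructions: $A\otimes_A M\simeq M$ and $M\otimes_A A\simeq M$ hold with no hypothesis on $\cC$, since the relevant simplicial object is split. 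Hence the data in condition (2) of \cref{c:frob-equiv-explicit} — the bimodule map $\delta\colon A\to A\otimes A$ and the 2-isomorphisms $(\epsilon\otimes\id_A)\delta\simeq\id_A\simeq(\id_A\otimes\epsilon)\delta$ — can be formed in any symmetric monoidal 1-category, and literally coincides with the ``coevaluation plus zigzag identities'' unpacking of condition (2) of \cref{l:frob-is-mod-dual} spelled out in the paragraph following that lemma.

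Next I would set up the reduction. Apply the symmetric monoidal Yoneda embedding $j\colon\cC\hookrightarrow\PSh(\cC)$ of \cref{o:add-geometric-realizations}; since $j$ is fully faithful, symmetric monoidal, and induces a fully faithful embedding on bimodule categories, it identifies: algebra structures on $A$ with algebra structures on $jA$; maps $\epsilon\colon A\to\one$ with maps $jA\to\one$; and bimodule maps $\delta\colon A\to A\otimes A$ together with their zigzag 2-isomorphisms with the corresponding data for $jA$ (here using that $j$ preserves the trivial tensor products $A\otimes_A A$ on the nose, as they are computed objectwise). Moreover $\PSh(\cC)$ has all geometric realizations, and the Day convolution tensor product preserves colimits in each variable, so \cref{l:frob-is-mod-dual} applies verbatim in $\PSh(\cC)$.

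Now the two equivalences can be chained. For $(A,\epsilon)$ in $\cC$: condition (1) (being Frobenius in $\cC$) holds iff $(jA, j\epsilon)$ is Frobenius in $\PSh(\cC)$ — because ``$A\otimes A\xrightarrow{\mu}A\xrightarrow{\epsilon}\one$ is an evaluation exhibiting self-duality'' is a statement about finitely many maps and 2-isomorphisms in $\cC$, hence detected by the fully faithful $j$ — and this in turn holds iff $jA\in\RMod_{jA}(\PSh(\cC))$ is right dual to $jA\in\LMod_{jA}(\PSh(\cC))$ via $j\epsilon$, by \cref{l:frob-is-mod-dual}. By the explicit unpacking of condition (2), the latter is equivalent to the existence of a bimodule coevaluation $\delta'\colon jA\to jA\otimes jA$ with zigzag 2-isomorphisms, which (again by full faithfulness of $j$ on bimodule categories and by triviality of the relative tensor products) descends to exactly the data of condition (2) of \cref{c:frob-equiv-explicit} for $A$ in $\cC$. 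The main obstacle, such as it is, is purely coherence bookkeeping: verifying that passing to $\PSh(\cC)$ and back does not alter the space of Frobenius structures, i.e.\ that $j$ induces an \emph{equivalence} (not merely an injection) on the relevant spaces of $(\delta,\text{zigzag})$-data. This follows because $j$ is fully faithful and the data in question lives in a finite limit of mapping spaces of $\cC$ that $j$ preserves; I would phrase this as: both conditions define full subspaces of $\Map_{\cC}(A,\one)$ (or of the space of such $\epsilon$ together with the choice of $\delta$), and $j$ carries one onto the other.
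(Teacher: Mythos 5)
Your proposal is correct and follows essentially the same route as the paper: unpack condition (2) of \cref{l:frob-is-mod-dual} into the coevaluation-plus-zigzag data, observe that the only relative tensor products involved are trivial (hence exist in any $\cC$), and use the symmetric monoidal Yoneda embedding of \cref{o:add-geometric-realizations} to transfer the statement to $\PSh(\cC)$ and back. Your extra care about $j$ inducing an equivalence (not just an injection) on the relevant spaces of data is exactly the bookkeeping the paper leaves implicit.
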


\begin{remark}
  Let $A\in \Alg(\cC)$ be an algebra with unit $\eta\colon \one\to A$ and multiplication $\mu\colon A\otimes A\to A$. 
  Suppose $\epsilon\colon A\to \one$ and $\delta\colon A\to A\otimes A$ exhibit $A$ as a Frobenius algebra as in \cref{c:frob-equiv-explicit}, then the self-duality of $A$ is such that $\epsilon\simeq\trans{\eta}$ and $\delta\simeq\trans{\mu}$. 
  Moreover, the symmetric monoidal involution $\cC^\dbl\isoto (\cC^\dbl)^\op$ endows $A$ with a coalgebra structure with counit $\epsilon$ and comultiplication $\delta$; the zigzag identities in \cref{c:frob-equiv-explicit} are then the counitality relations.
  We will henceforth refer to $\epsilon$ as the \emph{counit} and $\delta$ the \emph{comultiplication} of $A$.
\end{remark}

Suppose $A,B\in \Alg(\cC)$ are algebras and $f\colon A\to B$ is a map of algebras.
In particular, $f$ induces both a left and a right $A$-module structure on $B$ such that $f$ is a map of left and right $A$-modules. 

\begin{corollary}\label{c:trans-of-frob}
  If $f\colon A\to B$ is an algebra map between Frobenius algebras, then $\trans{f}\colon B\to A$ has the structure of a right $A$-module map.
\end{corollary}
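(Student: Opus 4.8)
The plan is to reduce the statement to Lemma \ref{l:trans-is-rmod}, which already handles the transpose of a left-module map between dualizable modules. First I would recall that a Frobenius algebra $A$ is, by Lemma \ref{l:frob-is-mod-dual} (together with Observation \ref{o:add-geometric-realizations} to dispense with the hypothesis on geometric realizations), self-dual in $\cC$ in such a way that its dual as a \emph{left} $A$-module is $A$ viewed as a \emph{right} $A$-module; more precisely, $\dual{A}\simeq A$ as an object of $\RMod_A(\cC)$, with evaluation supplied by $\epsilon\circ\mu$. The same holds for $B$. Now an algebra map $f\colon A\to B$ restricts along itself to make $B$ into a left (and right) $A$-module, and $f\colon A\to B$ is then a map in $\LMod_A(\cC)$, both modules being dualizable in $\cC$ since they are self-dual.

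The key step is then to apply Lemma \ref{l:trans-is-rmod} with the ambient algebra taken to be $A$: it gives $\trans{f}\colon \dual{B}\to\dual{A}$ the structure of a right $A$-module map, where here the $A$-module structures on $\dual{B}$ and $\dual{A}$ are the ones coming from their structures as left $A$-modules. Under the Frobenius self-dualities $\dual{A}\simeq A$ and $\dual{B}\simeq B$ this is exactly a right $A$-module map $\trans{f}\colon B\to A$, provided we check that the right $A$-module structure on $\dual{A}$ transported from its left $A$-module structure agrees with the tautological right $A$-module structure on $A$, and likewise for $B$. But this is precisely the content of Lemma \ref{l:frob-is-mod-dual}(2): the Frobenius condition says that the dual of $A\in\LMod_A(\cC)$ \emph{is} $A\in\RMod_A(\cC)$, so the two structures match by construction (and similarly for $B$, where the left $A$-module structure on $B$ is restriction along $f$ of the left $B$-module structure, and its dual carries the right $A$-module structure obtained by restricting the right $B$-module structure on $\dual{B}\simeq B$ along $f$).

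I expect the main obstacle to be bookkeeping rather than anything deep: one must be careful that the ``restriction of scalars along $f$'' operations on modules and on their duals are compatible, i.e. that taking the dual of a left $A$-module commutes with the identification of $B$-module structures restricted to $A$-module structures. The cleanest way to handle this is to invoke Observation \ref{o:add-geometric-realizations} to assume $\cC$ has geometric realizations compatible with $\otimes$, so that relative tensor products over $A$ and $B$ genuinely exist, and then run the whole argument inside $\BMod(\cC)$, where Lemma \ref{l:dual-iff-bmod-dual} and Remark 4.6.2.12 of \cite{ha} give functorial control of duals under restriction of scalars. The diagram appearing in the proof of Lemma \ref{l:trans-is-rmod}, with $A$ in place of the base, then specializes to exhibit the commuting square identifying $\trans{f}$ (as a $\cC$-transpose) with a composite of right $A$-module maps, and taking the Frobenius identifications on the outer vertical edges completes the proof.
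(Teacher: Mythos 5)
Your proposal is correct and follows essentially the same route as the paper: invoke \cref{o:add-geometric-realizations} to assume geometric realizations, use \cref{l:frob-is-mod-dual} to identify $\dual{B}$ with $B$ as a (right) module, use \cref{l:dual-iff-bmod-dual} to see that after restriction of scalars along $f$ the right $A$-module structure on $\dual{B}$ is the induced one on $B$, and then apply \cref{l:trans-is-rmod} to $f$ viewed as a map of left $A$-modules. The compatibility check you flag is exactly the step the paper disposes of with \cref{l:dual-iff-bmod-dual} and \cite[Remark 4.6.2.12]{ha}, so there is no gap.
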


\begin{proof}
Use \cref{o:add-geometric-realizations} to assume that $\cC$ has geometric realizations that commute with the tensor product in each variable.
By \cref{l:frob-is-mod-dual}, $\dual{B}$ is $B$ as a right $B$-module. 
It follows from \cref{l:dual-iff-bmod-dual} that as a right $A$-module, $\dual{B}$ is $B$ with the induced right $A$-action.
The result then follows from \cref{l:trans-is-rmod}.
\end{proof}

\subsection{Duality and adjunction}

Let $\cU$ be a symmetric monoidal 2-category.
An object $X\in \cU$ is called \emph{dualizable} if it is dualizable in the 1-core $X\in (\cU^{\simeq_1})^\dbl$.
We explicitly do \emph{not} consider the 2-categorical notion of \emph{full dualizability}, which requires the evaluation and coevaluation maps to admit adjoints.
We will, however, consider the interaction between duality and adjunction in $\cU$. 
For a rigid algebra $A$, the right adjoints of the unit and multiplication give $A$ the structure of a Frobenius algebra, exhibiting it as self-dual:

\begin{corollary}\label{c:rigid-is-frob}
  Let $A\in \Algrig(\cU)$ be a rigid algebra with unit $\eta$ and multiplication $\mu$.
  Then $\epsilon$ and $\delta$, the right adjoints of $\eta$ and $\mu$ respectively, equip $A$ with a Frobenius algebra structure.
\end{corollary}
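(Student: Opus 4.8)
The plan is to invoke \cref{c:frob-equiv-explicit}, which reduces the claim to producing a map $\delta\colon A\to A\otimes A$ of $A$-bimodules in $\cU$ — living in the $(1,1)$-category, not merely the lax one — together with the counitality 2-isomorphisms $(\epsilon\otimes\id_A)\delta\simeq\id_A\simeq(\id_A\otimes\epsilon)\delta$. Here $\epsilon$ is by definition the right adjoint of $\eta\colon\one\to A$ in $\cU$, which exists by rigidity condition (1), and $\delta$ is by definition the right adjoint of $\mu\colon A\otimes A\to A$ taken \emph{in} $\Bmod[A]{A}(\cU)$, which exists by rigidity condition (2). The point is that these adjoint data, which are available only in the 2-categorical setting, supply exactly the structure that $\cref{c:frob-equiv-explicit}$ asks for in the 1-categorical $\cC:=\hom_\cU(\one,\one)$ (or more precisely, the relevant mapping $(1,1)$-category extracted from $\cU$). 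Actually, the cleanest framing: work inside the symmetric monoidal $(1,1)$-category obtained from $\cU$ by discarding non-invertible 2-morphisms — no, better — we treat $A$ as an algebra object in $\cU$ and apply the 1-categorical criterion of \cref{c:frob-equiv-explicit} to the homotopy category or, more carefully, to the underlying $\infty$-category structure witnessed by $\cU$'s hom-categories; the comultiplication $\delta$ being a bimodule map in $\cU$ is precisely condition (2) of that corollary.

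First I would record that $\delta$ is automatically an $A$-bimodule map because it was \emph{defined} as a right adjoint in $\Bmod[A]{A}(\cU)$; its underlying 1-morphism $A\to A\otimes A$ in $\cU$ is what we feed into \cref{c:frob-equiv-explicit}. Next I would establish the two counitality relations. Consider the composite $(\epsilon\otimes\id_A)\circ\delta\colon A\to A$. I would show this 2-morphism — or rather this 1-morphism together with a canonical 2-isomorphism to $\id_A$ — arises from the triangle identities (zigzag identities) of the two adjunctions $\eta\dashv\epsilon$ and $\mu\dashv\delta$. The key computation: $\mu\circ(\eta\otimes\id_A)\simeq\id_A$ (unitality of the algebra $A$), so taking right adjoints and using that right adjoints compose contravariantly gives $(\eta\otimes\id_A)^R\circ\mu^R\simeq\id_A^R=\id_A$, i.e. $(\epsilon\otimes\id_A)\circ\delta\simeq\id_A$ — using that $(\eta\otimes\id_A)^R\simeq\epsilon\otimes\id_A$ since $-\otimes\id_A$ is a symmetric monoidal operation and hence preserves the adjunction $\eta\dashv\epsilon$. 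The symmetric computation with $\mu\circ(\id_A\otimes\eta)\simeq\id_A$ gives the other relation. With both relations in hand, \cref{c:frob-equiv-explicit} furnishes the Frobenius structure on $(A,\epsilon)$, and I would conclude by noting the resulting self-duality is the one we want.

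The main obstacle I anticipate is bookkeeping around \emph{which} category the adjoints and the Frobenius structure live in, and ensuring compatibility: $\delta$ is a priori only the right adjoint of $\mu$ inside $\Bmod[A]{A}(\cU)$, and one must check that passing to the underlying 1-morphism in $\cU$, and then re-reading it as data in the sense of \cref{c:frob-equiv-explicit} (which is a 1-categorical statement about a map $\delta\colon A\to A\otimes A$ of bimodules), is coherent — in particular that the 2-isomorphisms exhibiting counitality are the ones coming from the adjunction units/counits rather than some incompatible choice. A secondary subtlety is justifying $(\eta\otimes\id_A)^R\simeq\epsilon\otimes\id_A$ and more generally that tensoring with a fixed object is a 2-functor preserving adjunctions; this is standard but should be cited or stated. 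Modulo these coherence checks — which are routine given the framework of \cref{s:projection-formula} and the explicit form of \cref{c:frob-equiv-explicit} — the corollary follows immediately, which is why it is stated as a corollary.
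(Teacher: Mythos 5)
Your proposal is correct and takes essentially the same route as the paper: the paper's proof likewise invokes \cref{c:frob-equiv-explicit}, feeds in $\epsilon$ and $\delta$ as the given right adjoints (with $\delta$ a bimodule map by construction), and notes that the counitality 2-isomorphisms follow by adjunction from the unitality relations of $A$ --- exactly the computation you spell out via uniqueness of right adjoints and the fact that $-\otimes\id_A$ preserves adjunctions.
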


\begin{proof}
  Using \cref{c:frob-equiv-explicit}, it only remains to check the counitality relations.
  These follow by adjunction from the unitality relations of $A$.
\end{proof}

Note that $\epsilon$ and $\delta$ are simultaneously the right adjoints and the transposes of $\eta$ and $\mu$. 
We wish to generalize this relationship to arbitrary maps of rigid algebras.
To that end, we work in the lax arrow category $\Funlax(\Delta^1, \cU)$, where objects are vertical arrows and morphisms are given by lax commuting squares (see \cref{e:lax-arrow-category}). In this setting, the interaction of duality and adjunction is more transparent. 
This is illustrated by the following result of Hoyois, Scherotzke, and Sibilla:

\begin{lemma}[{\cite[Lemma 2.4]{hoyois2017highertracesnoncommutativemotives}}]\label{l:dbl-arrow}
  An arrow $f\colon X\to Y\in \cU$ is dualizable as an object of $\Funlax(\Delta^1,\cU)$ if and only if $X$ and $Y$ are dualizable in $\cU$ and $f$ has a right adjoint, in which case the dual of $f$ is $\dual{f}=\trans{(f^R)}\colon \dual{X}\to \dual{Y}$.
\end{lemma}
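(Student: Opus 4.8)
The plan is to reduce the statement to a combination of two inputs developed earlier in the paper: the criterion for adjunctions of lax natural transformations (\cref{p:adj-lax-nat}), applied with $\cU = \Delta^1$, and the characterization of duality inside a symmetric monoidal 2-category via evaluation/coevaluation data. The key observation is that the lax arrow category $\Funlax(\Delta^1,\cU)$ is symmetric monoidal pointwise (the projection to the point induces the diagonal, and the tensor is computed in $\cU$ in each slot), so that an object $f\colon X\to Y$ is dualizable there precisely when it admits a dual arrow $\dual f\colon X'\to Y'$ together with evaluation and coevaluation \emph{morphisms in the lax arrow category} — i.e.\ lax commuting squares — satisfying the zigzag identities up to invertible 2-morphisms.

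First I would treat the ``only if'' direction. Suppose $f$ is dualizable in $\Funlax(\Delta^1,\cU)$ with dual $\dual f\colon X'\to Y'$. Postcomposing with the two projections $\Funlax(\Delta^1,\cU)\to \cU$ (evaluation at the source $0$ and at the target $1$), which are symmetric monoidal, shows that $X$ and $X'$ are dual in $\cU$ (so $X' \simeq \dual X$) and likewise $Y' \simeq \dual Y$. It remains to extract the right adjoint of $f$. Here I would unwind what the evaluation square $\ev\colon \dual f\otimes f \to \id_{\one}$ and the coevaluation square $\coev\colon \id_{\one}\to f\otimes \dual f$ actually are: each is a lax square, and its ``lax-ness'' 2-cell, together with the underlying duality data in $\cU$, assembles — via the zigzag identities — into a unit and counit exhibiting $f \dashv g$ for some $g$. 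By tracking the 2-cells one identifies $g = \trans{(f^R)}$; more efficiently, one can invoke \cref{p:adj-lax-nat} directly, since a dualizable object is in particular part of an adjunction $\ev^L \dashv \ev$ or similar, but the cleanest route is: the coevaluation square, read as a lax natural transformation from the constant functor at $\one$, has as its structure 2-cell a map that the zigzag identity forces to be the unit of an adjunction $f\dashv f^R$; I would make this precise by chasing the two zigzag triangles.

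For the ``if'' direction, assume $X,Y$ dualizable in $\cU$ and $f$ admits a right adjoint $f^R$. I would then simply \emph{produce} the dual object and the structure squares. Take $\dual f := \trans{(f^R)}\colon \dual X\to\dual Y$, the transpose of $f^R$ computed from the dualities in $\cU$. The evaluation square has underlying horizontal arrows $\ev_X\colon \dual X\otimes X\to\one$ and $\ev_Y\colon \dual Y\otimes Y\to\one$, left vertical $\dual f\otimes f$, right vertical $\id_\one$, and its lax-filling 2-cell is built from the counit $c_f\colon ff^R\Rightarrow\id_Y$; dually, the coevaluation square is built from the unit $u_f$. The zigzag identities in $\Funlax(\Delta^1,\cU)$ then split into: (i) the underlying zigzags in $\cU$ for $X$ and $Y$, which hold by hypothesis, and (ii) a compatibility of the filling 2-cells, which reduces exactly to the triangle identities for $f\dashv f^R$. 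Assembling these gives the dualizability datum, and by construction $\dual f = \trans{(f^R)}$.

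The main obstacle I anticipate is bookkeeping: making rigorous the claim that the zigzag identities in the lax arrow category decompose cleanly into ``object-wise zigzags in $\cU$'' plus ``triangle identities for the adjunction,'' since the 2-cells involved live in a Gray-product-flavored mapping category and the coherences must be matched carefully. I would mitigate this by working through \cref{p:adj-lax-nat} and \cref{l:adj-is-ff} rather than by hand: the lemma \cref{l:adj-is-ff} says that an object-wise-left-adjoint strong natural transformation is canonically part of an adjunction in $\Funlax$, and applied to the (constant-functor) transformations underlying $\ev$ and $\coev$ this converts the required 2-cell compatibilities into statements that are automatic. The remaining, genuinely computational point — identifying the dual arrow as the transpose of $f^R$ rather than, say, $(\trans f)^R$ — is a short diagram chase using that transpose and right adjoint of $f$ combine via the evident Beck–Chevalley cell, and I would state it as such without grinding through every pasting.
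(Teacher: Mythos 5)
The paper does not prove this statement itself: it is quoted from \cite[Lemma 2.4]{hoyois2017highertracesnoncommutativemotives}, and the text only recalls afterwards the unit/counit squares that the cited proof produces. Your sketch is essentially a reconstruction of that cited argument rather than a different route: project along the two (symmetric monoidal) endpoint evaluations $\Funlax(\Delta^1,\cU)\to\cU$ to see that $X$ and $Y$ must be dualizable; conversely set $\dual{f}:=\trans{(f^R)}$, build the evaluation and coevaluation lax squares from the duality data of $X$ and $Y$ together with $u_f$ and $c_f$, and observe that the zigzag identities in the lax arrow category amount to the object-wise zigzags plus the triangle identities for $f\dashv f^R$. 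If carried out in full, a direct argument in the Gray product model of \cite{gagna2021graytensorproducts} would even have the modest advantage of avoiding the model-comparison caveat the paper records in the remark following the lemma.

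Two points would need repair before the sketch counts as a proof. First, in the ``only if'' direction the morphism you must produce is the right adjoint $f^R\simeq\trans{(\dual{f})}\colon Y\to X$; writing ``$g=\trans{(f^R)}$'' is circular (it presupposes $f^R$) and of the wrong type, since $\trans{(f^R)}\colon\dual{X}\to\dual{Y}$ is the candidate \emph{dual}, not a candidate right adjoint. The unit and counit of $f\dashv\trans{(\dual{f})}$ are the mates, through the dualities of $X$ and $Y$, of the lax fillings of $\coev$ and $\ev$, and the triangle identities follow from the zigzag identities in $\Funlax(\Delta^1,\cU)$; this pasting argument is the real content of the lemma and cannot be outsourced to \cref{p:adj-lax-nat} or \cref{l:adj-is-ff}, which concern adjunctions between (lax) natural transformations and say nothing about dualizability data or about compatibility of fillings under pasting, so the proposed ``mitigation'' does not apply and the deferred chase must actually be done (or the lemma simply cited, as the paper does). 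Second, a smaller orientation slip: with the paper's convention for lax squares (the 2-cell goes from the right-vertical-after-top composite to the bottom-after-left-vertical composite), the filling of the evaluation square is a 2-cell $\ev_X\Rightarrow\ev_Y\circ(\dual{f}\otimes f)$ and is built from the \emph{unit} $u_f$, while the filling of the coevaluation square is a 2-cell $(f\otimes\dual{f})\circ\coev_X\Rightarrow\coev_Y$ built from the \emph{counit} $c_f$ --- the opposite of the assignment you wrote.
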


If $f\colon X\to Y\in \cU$ is dualizable in $\Funlax(\Delta^1,\cU)$, then by \cref{l:dbl-arrow} $f$ has a right adjoint given by $f^R=\trans{(\dual{f})}$.
Alternatively, this can be phrased as an adjunction $\dual{f}\dashv\trans{f}$ in $\cU$.
The proof of \cref{l:dbl-arrow} also constructs the unit and counit of this adjunction, as we now describe.

Consider the trivially commuting squares
\[\begin{tikzcd}
  X & X && X & Y \\
  X & Y, && Y & Y
  \arrow[equals, from=1-1, to=1-2]
  \arrow[equals, from=1-1, to=2-1]
  \arrow["f", from=1-2, to=2-2]
  \arrow["f", from=1-4, to=1-5]
  \arrow["f"', from=1-4, to=2-4]
  \arrow[equals, from=1-5, to=2-5]
  \arrow["f"', from=2-1, to=2-2]
  \arrow[equals, from=2-4, to=2-5]
\end{tikzcd}\]
viewed as maps $\id_{X}\to f$ and $f\to \id_{Y}$ in $\Fun(\Delta^1,\cU)$, where the objects are vertical arrows and the morphisms go horizontally.
Assuming that $f$ is dualizable in $\Funlax(\Delta^1,\cU)$, taking the transpose of these maps produces lax commuting squares
\[\begin{tikzcd}
  {\dual{X}} & {\dual{X}} && {\dual{Y}} & {\dual{X}} \\
  {\dual{Y}} & {\dual{X},} && {\dual{Y}} & {\dual{Y}}.
  \arrow["{\dual{f}}"', from=1-1, to=2-1]
  \arrow[equals, from=1-2, to=1-1]
  \arrow[shorten <=9pt, shorten >=9pt, Rightarrow, from=1-2, to=2-1]
  \arrow[equals, from=1-2, to=2-2]
  \arrow["{\trans{f}}", from=1-4, to=1-5]
  \arrow[equals, from=1-4, to=2-4]
  \arrow[shorten <=9pt, shorten >=9pt, Rightarrow, from=1-5, to=2-4]
  \arrow["{\dual{f}}", from=1-5, to=2-5]
  \arrow["{\trans{f}}"', from=2-1, to=2-2]
  \arrow[equals, from=2-5, to=2-4]
\end{tikzcd}\]
The above 2-morphisms are respectively a unit and counit for an adjunction $\dual{f}\dashv\trans{f}$, and the zigzag identities follow from composing the two squares horizontally and vertically.

\begin{remark}
   In \cite{hoyois2017highertracesnoncommutativemotives}, the authors work with the model of the Gray product introduced in \cite{johnsonfreyd2017laxnatural}, rather than the one we adopt from \cite{gagna2021graytensorproducts}. 
   While a complete comparison of the coherence data in the two models has not been carried out, the basic operations such as gluing lax squares are compatible across both approaches.
\end{remark}

Let $A$ be an algebra in $\cU$ and let $f\colon M\to N$ be a map of left $A$-modules such that $f$ is dualizable in $\Funlax(\Delta^1,\cU)$. 
The transpose $\trans{f}\colon \dual{N}\to \dual{M}$ is a map of right $A$-modules by \cref{l:trans-is-rmod}, so the adjunction $\dual{f}\dashv \trans{f}$ induces the structure of an oplax linear right $A$-module map on $\dual{f}$ by an oplax version of \cref{p:projection-formula}.

We will require a more precise understanding of the interaction between the duality of $f$ and the resulting  oplax linear structure on $\dual{f}$.
To that end, we use \cref{c:modules-in-lax-arrow-category} to consider $f$ as a left $\id_A$-module in $\Fun(\Delta^1,\cU)$, and in particular $f\in \LMod_{\id_A}(\Funlax(\Delta^1,\cU))$.
By \cref{c:dual-of-lmod-is-rmod}, $\dual{f}\in \RMod_{\id_A}(\Funlax(\Delta^1,\cU))$, so using \cref{c:modules-in-lax-arrow-category} again we get that $\dual{f}$ is an oplax linear map of right $A$-modules.
The unit and counit of $\dual{f}\dashv\trans{f}$ constructed above can similarly be lifted to $\RMod_A^\oplax(\cU)$.

\begin{lemma}\label{l:two-oplax-structures}
  The adjunction $\dual{f}\dashv\trans{f}$ lifts to $\RMod^\oplax_A(\cU)$, where $\trans{f}$ is a map of right $A$-modules by \cref{l:trans-is-rmod} and $\dual{f}$ has the structure described in the above paragraph.
\end{lemma}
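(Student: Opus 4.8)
The statement asserts that two *a priori* different oplax right $A$-module structures on $\dual{f}$ — one coming from the general theory of duals of left modules (\cref{c:dual-of-lmod-is-rmod} applied inside $\Funlax(\Delta^1,\cU)$), the other coming from the adjunction $\dual{f}\dashv\trans{f}$ via the oplax projection formula — agree, and moreover that the explicit unit and counit of $\dual{f}\dashv\trans{f}$ built above (from transposing the squares $\id_X\to f$ and $f\to\id_Y$) lift compatibly to $\RMod^\oplax_A(\cU)$. My strategy is to \emph{avoid comparing the two oplax structures by hand} and instead deduce everything from a single source: the duality of $f$ inside the lax arrow category, organized $A$-linearly. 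Concretely, I would work throughout in $\Funlax(\Delta^1,\cU)$, and use \cref{c:modules-in-lax-arrow-category} to identify $\LMod^{\phantom{\oplax}}_{\id_A}(\Funlax(\Delta^1,\cU))$ with $\LMod^\oplax_A(\Funlax(\Delta^1,\cU))$ — wait, more precisely I would use the isomorphism $\Funlax(\Delta^1,\LMod^\oplax_A(\cU))\simeq \LMod^\oplax_{p^*A}(\Funlax(\Delta^1,\cU))$ from \cref{c:modules-in-lax-arrow-category} to reinterpret the left $\id_A$-module $f$ (a strong left module map, viewed in $\Fun(\Delta^1,\cU)\subseteq\Funlax(\Delta^1,\cU)$) as an object of the lax arrow category of $\LMod^\oplax_A(\cU)$.

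\textbf{Key steps, in order.} First, I would record that $f$, being a strong left $A$-module map with $M,N$ dualizable and $f$ right-adjointable, is dualizable as an object of $\LMod_{\id_A}(\Funlax(\Delta^1,\cU))$: indeed $\LMod_{\id_A}$ is built from $\Funlax(\Delta^1,\cU)$ by the same operadic/pullback construction, duality in module categories reduces to duality in the ambient category by \cref{l:dual-iff-bmod-dual} (via \cref{o:add-geometric-realizations} to supply geometric realizations), and $f$ is dualizable in $\Funlax(\Delta^1,\cU)$ by \cref{l:dbl-arrow}. Second, I would unwind what this dualizability gives: its dual is an object of $\RMod_{\id_A}(\Funlax(\Delta^1,\cU))$, which under \cref{c:modules-in-lax-arrow-category} is exactly an oplax right $A$-module map; by \cref{l:dbl-arrow} again its underlying arrow is $\trans{(f^R)}=\dual{f}\colon\dual{X}\to\dual{Y}$, so this is \emph{the} oplax right $A$-module structure on $\dual{f}$ referred to in the lemma, and simultaneously it is the one produced by \cref{c:dual-of-lmod-is-rmod} applied in $\Funlax(\Delta^1,\cU)$ — these coincide because both are literally ``the dual of the left module $f$''. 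Third, for the unit and counit: the maps $\id_X\to f$ and $f\to\id_Y$ are maps of left $\id_A$-modules in $\Fun(\Delta^1,\cU)$ (the trivially commuting squares are squares of left $A$-modules because $f$ is $A$-linear), hence their transposes are maps of right $\id_A$-modules in $\Funlax(\Delta^1,\cU)$ by \cref{l:trans-is-rmod} applied inside the lax arrow category; transporting along \cref{c:modules-in-lax-arrow-category} these transposed squares are exactly the unit and counit of $\dual{f}\dashv\trans{f}$ sitting in $\RMod^\oplax_A(\cU)$. Finally, the zigzag identities are identities of right $A$-module 2-morphisms because they are obtained by the same horizontal/vertical pasting of squares, now performed in the arrow category of $\RMod^\oplax_A(\cU)$, where they already hold in $\Funlax(\Delta^1,\cU)$ by \cref{l:dbl-arrow}.

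\textbf{Main obstacle.} The routine parts are bookkeeping with the iterated $\Funlax$/module-category isomorphisms of \cref{c:modules-in-lax-arrow-category}; the genuinely delicate point is the \emph{compatibility of coherence data across Gray-product models}, flagged in the remark after \cref{l:dbl-arrow}: the construction of the unit/counit of $\dual{f}\dashv\trans{f}$ uses that transposing the elementary squares and then pasting them reproduces the zigzags, and this pasting manipulation — together with the identification of ``the dual of a left module'' with ``the oplax right module from the projection formula'' — must be performed compatibly with the identifications of \cite{gagna2021graytensorproducts} and \cite{abellan2024adjunctions}. I expect the cleanest route is to never leave the $\gagna$-style model: set up duality of $f$ as a left $\id_A$-module entirely inside $\Funlax(\Delta^1,\cU)$ using \cref{l:dbl-arrow} and \cref{c:modules-in-lax-arrow-category}, so that \emph{all} structure (the oplax right $A$-linearity of $\dual{f}$, the unit, the counit, the zigzags) is read off from one dualizability datum and no model comparison is needed; the two ``a priori different'' oplax structures are then tautologically the same, being two names for the single right $\id_A$-module $\dual{f}$.
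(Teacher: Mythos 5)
Your proposal is correct and follows essentially the same route as the paper: view $f$ as a left $\id_A$-module in $\Funlax(\Delta^1,\cU)$ via \cref{c:modules-in-lax-arrow-category}, note that the unit and counit of $\dual{f}\dashv\trans{f}$ are transposes of the left-module maps $\id_M\to f$ and $f\to\id_N$, apply \cref{l:trans-is-rmod} inside the lax arrow category, and transport back through \cref{c:modules-in-lax-arrow-category} to land in $\RMod^\oplax_A(\cU)$. The extra dualizability bookkeeping and the worry about comparing the two oplax structures are harmless but not needed, since the lemma only concerns the ``dual of the left module'' structure, exactly as you ultimately observe.
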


\begin{proof}
  Consider the maps $\id_M\to f$ and $f\to \id_N$ from above whose transposes produce the unit and counit of the adjunction $\dual{f}\dashv\trans{f}$.
  Using \cref{c:modules-in-lax-arrow-category}, these maps are morphisms in
  $$\Fun(\Delta^1,\LMod_A(\cU))\simeq \LMod_{\id_A}(\Fun(\Delta^1,\cU)),$$
  and in particular they are morphisms in $\LMod_{\id_A}(\Funlax(\Delta^1,\cU))$.
  By \cref{l:trans-is-rmod}, their transposes are in $\RMod_{\id_A}(\Funlax(\Delta^1,\cU))$, and in particular they are in 
  $$\RMod^\oplax_{\id_A}(\Funlax(\Delta^1,\cU))\simeq \Funlax(\Delta^1,\RMod^\oplax_A(\cU)).$$
  Thus, the unit and counit are 2-morphisms in $\RMod^\oplax_A(\cU)$.
\end{proof}

\subsection{Maps between rigid algebras}

In this subsection we study the 2-category of rigid algebras $\Algrig(\cU)$.
Our main result is that maps of rigid algebras are rigid algebras in the lax arrow category, from which we deduce that those maps are adjoint to their transpose and that $\Algrig(\cU)$ is a 1-category.

Suppose $f\colon A\to B\in \Alg(\cU)$ is a map of algebras.
By \cref{c:algebras-in-lax-arrow-category}, $f$ is an algebra object in the arrow category $\Fun(\Delta^1,\cU)$, and in particular in the lax arrow category $\Funlax(\Delta^1,\cU)$.

\begin{proposition}\label{p:rigid-map-is-rigid}
  Let $f\colon A\to B\in \Alg(\cU)$ be an algebra map.
  Then $f$ is rigid as an algebra in $\Funlax(\Delta^1,\cU)$ if and only if $A$ and $B$ are rigid.
\end{proposition}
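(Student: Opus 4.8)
The plan is to unwind the definition of rigidity for the algebra $f \in \Alg(\Funlax(\Delta^1,\cU))$ into two conditions — one about the unit map $\id_\one \to f$ and one about the multiplication $f \otimes f \to f$ — and match each against the corresponding conditions for $A$ and $B$ separately. The key structural input is \cref{c:algebras-in-lax-arrow-category}, which lets me regard $f$ as an algebra in the strict arrow category $\Fun(\Delta^1,\cU)$, so that its unit and multiplication are the commuting squares assembled from $(\eta_A,\eta_B)$ and $(\mu_A,\mu_B)$ respectively, viewed as (strong) morphisms in the lax arrow category. The rigidity of $f$ then amounts to: (1) the square $(\eta_A,\eta_B)\colon \id_\one \Rightarrow f$ admits a right adjoint in $\Funlax(\Delta^1,\cU)$; and (2) the square $(\mu_A,\mu_B)\colon f\otimes f \Rightarrow f$ admits a right adjoint in $\Bmod[f]{f}(\Funlax(\Delta^1,\cU))$, or equivalently — via \cref{c:modules-in-lax-arrow-category} — in the $f$-$f$-bimodule objects of the lax arrow category.

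First I would handle condition (1). By \cref{p:adj-lax-nat}, a lax (here strong) natural transformation in $\Funlax(\Delta^1,\cU)$ has a right adjoint iff each component does, i.e. iff $\eta_A\colon \one \to A$ and $\eta_B\colon \one \to B$ both have right adjoints in $\cU$. That is exactly the first rigidity condition for $A$ and for $B$, so (1) $\iff$ ($A$ and $B$ satisfy condition 1). Note the Beck–Chevalley 2-morphism filling the resulting lax square is automatically available; we do not need it invertible, since we only require $f$ rigid, not that the adjoint be strong.

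Next, condition (2). Using \cref{c:modules-in-lax-arrow-category} to identify $\Funlax(\Delta^1, \Bmod[A]{A}^\oplax(\cU))$-type categories appropriately (and the analogous lax-module statement), the square $(\mu_A,\mu_B)$ is a morphism of $f$-$f$-bimodules in $\Funlax(\Delta^1,\cU)$, and asking for a right adjoint there is, componentwise, asking that $\mu_A$ have a right adjoint in $\Bmod[A]{A}(\cU)$ and $\mu_B$ have a right adjoint in $\Bmod[B]{B}(\cU)$ — compatibly. The projection formula (\cref{p:projection-formula}) is the engine: a right adjoint of $\mu_A$ in $\cU$ automatically upgrades to a lax bimodule map, and it is a genuine bimodule map precisely when the relevant composite 2-morphism is invertible, which is condition 2 for $A$ (resp. $B$). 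Combining this with \cref{p:adj-lax-nat} applied inside the 2-category of bimodule objects — the analogue of the argument for (1), but now in $\Bmod[p^*A]{p^*B}^\oplax(\Funlax(\Delta^1,\cU))$ and its strong counterpart — gives (2) $\iff$ ($A$ and $B$ both satisfy condition 2).

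The main obstacle I anticipate is bookkeeping the bimodule structure in condition (2): one must be careful that ``$f$-$f$-bimodule in the lax arrow category'' is correctly identified with ``$\id_A$-$\id_A$-bimodule, hence a pair of an $A$-$A$-bimodule over $X$-coordinate and a $B$-$B$-bimodule over $Y$-coordinate, linked by the lax square'', and that the right adjoint of $(\mu_A,\mu_B)$ lands in the strong (not merely lax) bimodule morphisms exactly when both components do. This is precisely where \cref{c:modules-in-lax-arrow-category}, \cref{p:sm-adj-lax-nat}, \cref{c:sm-adj-nat}, and \cref{p:projection-formula} must be orchestrated together; each individual step is an application of a cited result, but threading the lax/strong distinction through the bimodule layer is the delicate point. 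Once both equivalences are in hand, conjoining them yields: $f$ rigid in $\Funlax(\Delta^1,\cU)$ $\iff$ $A$ rigid and $B$ rigid.
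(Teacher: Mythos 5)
Your treatment of the unit condition is fine and matches the paper: the unit square is a strong transformation $\id_\one\to f$, and \cref{p:adj-lax-nat} reduces its adjointability in $\Funlax(\Delta^1,\cU)$ to adjointability of $\eta_A$ and $\eta_B$. The gap is in the multiplication condition. You propose to describe $f$-$f$-bimodule maps in the lax arrow category componentwise via \cref{c:modules-in-lax-arrow-category}, writing $\Bmod[p^*A]{p^*B}^\oplax(\Funlax(\Delta^1,\cU))$; but that corollary only covers bimodules over \emph{constant} diagrams $p^*A=\id_A$, $p^*B=\id_B$, whereas the rigidity condition for $f$ concerns $\Bmod[f]{f}(\Funlax(\Delta^1,\cU))$, and the algebra $f$ is not a constant diagram. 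Consequently there is no available ``\cref{p:adj-lax-nat} inside the 2-category of bimodule objects'', and your claimed equivalence (right adjoint in $f$-bimodules iff $\mu_A$, $\mu_B$ are right adjoint in $A$-, resp.\ $B$-bimodules, ``compatibly'') is precisely the statement that needs proof, not a consequence of the cited results. Flagging this as the delicate point does not fill it.

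The paper's route avoids describing $\Bmod[f]{f}$ componentwise altogether. One first gets, by \cref{p:adj-lax-nat}, a right adjoint $(\delta_A,\delta_B,\delta_f)$ to the multiplication square \emph{in the ambient 2-category} $\Funlax(\Delta^1,\cU)$, the lax filler being the Beck--Chevalley 2-morphism. Then one applies the projection formula \cref{p:projection-formula} with the symmetric monoidal 2-category taken to be $\Funlax(\Delta^1,\cU)$ itself and the algebra taken to be $f$: the adjunction lifts to $f$-bimodules as soon as a single projection 2-morphism in $\Funlax(\Delta^1,\cU)$ is invertible. The missing ingredient in your plan is the tool that makes this checkable componentwise: invertibility of a 2-morphism in $\Funlax(\Delta^1,\cU)$ can be detected after restricting to source and target (the paper cites \cite[Proposition 2.4.3]{abellan2024adjunctions}), where it becomes exactly the projection formulas for $\delta_A$ and $\delta_B$, i.e.\ rigidity of $A$ and $B$. (Also, the ``only if'' direction is cleanest not by componentwise matching but simply because the evaluation functors $\Funlax(\Delta^1,\cU)\to\cU$ at source and target are symmetric monoidal and hence preserve rigidity.)
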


\begin{proof}
  The restrictions to the source or target  $\Funlax(\Delta^1,\cU)\to\cU$ are symmetric monoidal, so the ``only if'' direction follows.
  For the ``if'' direction, the unit and multiplication of $f$ as an algebra in $\Fun(\Delta^1,A)$ are given by
  \[\begin{tikzcd}
    \one & A && {A\otimes A} & A \\
    \one & B, && {B\otimes B} & B.
    \arrow["{\eta_A}", from=1-1, to=1-2]
    \arrow["{\id_\one}"', from=1-1, to=2-1]
    \arrow["f", from=1-2, to=2-2]
    \arrow["{\mu_A}", from=1-4, to=1-5]
    \arrow["{f\otimes f}"', from=1-4, to=2-4]
    \arrow["f", from=1-5, to=2-5]
    \arrow["{\eta_B}"', from=2-1, to=2-2]
    \arrow["{\mu_B}"', from=2-4, to=2-5]
  \end{tikzcd}\]
  These squares have a right adjoint in $\Funlax(\Delta^1,\cU)$ by \cref{p:adj-lax-nat}, which is given by the Beck-Chevalley 2-morphism
  \[\begin{tikzcd}
    A & \one && A & {A\otimes A} \\
    B & \one, && {B} & {B\otimes B}.
    \arrow["{{\epsilon_A}}", from=1-1, to=1-2]
    \arrow["f"', from=1-1, to=2-1]
    \arrow["{{\epsilon_f}}"', shorten <=8pt, shorten >=8pt, Rightarrow, from=1-2, to=2-1]
    \arrow["{{\id_\one}}", from=1-2, to=2-2]
    \arrow["{{\delta_A}}", from=1-4, to=1-5]
    \arrow["f"', from=1-4, to=2-4]
    \arrow["{{\delta_f}}"', shorten <=8pt, shorten >=8pt, Rightarrow, from=1-5, to=2-4]
    \arrow["{{f\otimes f}}", from=1-5, to=2-5]
    \arrow["{{\epsilon_B}}"', from=2-1, to=2-2]
    \arrow["{{\delta_B}}"', from=2-4, to=2-5]
  \end{tikzcd}\]
  It remains to show that the second square is a map of $f$-bimodules, for which we use the projection formula of \cref{p:projection-formula}.
  For the projection formula, we need to show that a certain 2-morpshim is invertible in $\Funlax(\Delta^1,\cU)$.
  By \cite[Proposition 2.4.3]{abellan2024adjunctions}, it is enough to check that the 2-morphism is invertible when restricting to the source and target $\Funlax(\Delta^1,\cU)\to \cU\times\cU$, but there it follows from the projection formula for $\delta_A$ and $\delta_B$ respectively 
\end{proof}

An algebra map $f\colon A\to B$ endows $B$ with a right $A$-module structure for which $f$ is a map of right $A$-modules.
By \cref{c:trans-of-frob}, the same holds for its transpose $\trans{f}\colon B\to A$.

\begin{proposition}\label{p:rigid-map-adjoint}
  Let $f\colon A\to B\in \Algrig(\cU)$ be an algebra map between rigid algebras, then $\trans{f}$ is right adjoint to $f$ in $\RMod_A(\cU)$.
\end{proposition}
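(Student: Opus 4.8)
The plan is to deduce this from \cref{p:rigid-map-is-rigid} together with \cref{l:dbl-arrow} and \cref{l:two-oplax-structures}, following the same pattern by which \cref{c:rigid-is-frob} produced a Frobenius structure from rigidity. By \cref{p:rigid-map-is-rigid}, since $A$ and $B$ are rigid, the algebra map $f$ is a rigid algebra in $\Funlax(\Delta^1,\cU)$. In particular the unit map $\id_\one\to f$ of this algebra has a right adjoint, which means that $f$, viewed as an object of $\Funlax(\Delta^1,\cU)$, is dualizable (being a rigid algebra, it is self-dual). By \cref{l:dbl-arrow}, the dual of $f$ in the lax arrow category is $\dual{f}=\trans{(f^R)}$; but since $f$ is self-dual as a rigid algebra, we also have $\dual{f}\simeq f$ compatibly with the duality data, and the transpose construction on $\Funlax(\Delta^1,\cU)$ identifies $\trans{f}$ with $\trans{(\dual{f})}=f^R$ — in other words, the self-duality of the rigid algebra $f$ is exactly the statement that $\trans{f}$ is right adjoint to $f$ in $\cU$.

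The point that requires care, and which is the real content beyond the plain statement "$f$ is left adjoint to $\trans{f}$" (already essentially \cref{t:restatable-rigid-algebras}\ref{i:adjoint-transpose}), is that this adjunction lifts to $\RMod_A(\cU)$. Here I would invoke \cref{l:two-oplax-structures}: applying it to the left $A$-module map $f\colon A\to B$ (where $B$ carries the right $A$-module structure induced by $f$, and $f$ is tautologically a left $A$-module map since it is an algebra map), we get that the adjunction $\dual{f}\dashv \trans{f}$ lifts to $\RMod^\oplax_A(\cU)$, with $\trans{f}$ a strict right $A$-module map by \cref{l:trans-is-rmod} (equivalently \cref{c:trans-of-frob}). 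To upgrade this to an adjunction in $\RMod_A(\cU)$ rather than $\RMod^\oplax_A(\cU)$, I would use that the left adjoint $f$ is itself a \emph{strong} right $A$-module map — indeed $f$ is an algebra map, hence a strict map of the canonical bimodules — so the oplax linear structure on $f$ coming from the adjunction is forced to be invertible: this is the $\RMod$-analogue of \cref{c:sm-adj-nat}, i.e. an object-wise-left-adjoint strong natural transformation whose partner is also strong has its Beck–Chevalley maps automatically invertible. Thus $f\dashv \trans{f}$ already takes place in $\RMod_A(\cU)$, not merely in the oplax variant.

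Concretely, I would phrase the argument as follows. Consider $f$ as an object of $\LMod_{\id_A}(\Fun(\Delta^1,\cU))$ via \cref{c:modules-in-lax-arrow-category}, so it is dualizable there because it is a rigid (hence self-dual) algebra and its underlying arrow is dualizable in $\Funlax(\Delta^1,\cU)$ by \cref{l:dbl-arrow}. The self-duality of the rigid algebra $f$ provides evaluation and coevaluation squares in $\Funlax(\Delta^1,\cU)$ whose transposes — by the construction recalled after \cref{l:dbl-arrow} — are precisely a unit and counit exhibiting $f\dashv\trans{f}$ in $\cU$, and by \cref{l:two-oplax-structures} these are 2-morphisms in $\RMod^\oplax_A(\cU)$. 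Finally, since $f$ is a strong (indeed strict) right $A$-module map, \cref{c:sm-adj-nat} (in its $\RMod_A$ incarnation, obtained by the same envelope argument as \cref{p:projection-formula}) shows the adjunction descends to $\RMod_A(\cU)$.

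The main obstacle I anticipate is bookkeeping the compatibility between the \emph{two} right $A$-module structures on $\dual{f}$ that appear — the one coming from \cref{l:trans-is-rmod} applied to get a module structure on the dual object, and the one coming from the projection-formula-style transport of the adjunction — and checking that the self-duality datum of the rigid algebra $f$ matches the adjunction datum constructed in the proof of \cref{l:dbl-arrow}. This is exactly what \cref{l:two-oplax-structures} was set up to handle, so in the write-up the work is to invoke it correctly rather than to redo a coherence computation; the genuinely new step is only the observation that strongness of $f$ forces the oplax structure to be strong, reducing $\RMod^\oplax_A$ to $\RMod_A$.
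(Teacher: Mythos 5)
Your overall strategy matches the paper's (pass through \cref{p:rigid-map-is-rigid}, the self-duality of $f$ in $\Funlax(\Delta^1,\cU)$, and \cref{l:two-oplax-structures}), but there is a genuine gap at exactly the point you flag as ``bookkeeping''. What \cref{l:two-oplax-structures} gives you is an adjunction $\dual{f}\dashv\trans{f}$ in $\RMod^\oplax_A(\cU)$ in which $\dual{f}$ carries the duality-induced oplax right-$A$-linear structure; it does \emph{not} identify $\dual{f}$, with that structure, with $f$ equipped with its canonical strong right $A$-module map structure. That identification is the actual content of the proposition, and it does not follow from the bare self-duality of $f$ as an object of the lax arrow category. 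The paper obtains it by noting that $f$ is a Frobenius algebra in $\Funlax(\Delta^1,\cU)$ (\cref{p:rigid-map-is-rigid} plus \cref{c:rigid-is-frob}), so by \cref{l:frob-is-mod-dual} (using \cref{o:add-geometric-realizations}) one has $\dual{f}\simeq f$ as right $f$-modules, and then restricting scalars along $\id_A\to f$ and applying \cref{c:modules-in-lax-arrow-category} shows that $\dual{f}$ is $f$ \emph{with its natural structure as a map of right $A$-modules}. Your proposal never performs this module-level identification; invoking \cref{l:two-oplax-structures} for it misattributes to that lemma a compatibility it was not designed to provide.

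Your substitute final step is also not sound as stated: the principle that ``the left adjoint $f$ is a strong module map, so the oplax linear structure on it coming from the adjunction is forced to be invertible'' is false in general, and it is not what \cref{c:sm-adj-nat} says. In the oplax setting it is the \emph{right} adjoint that is automatically strong; the left adjoint can carry a genuinely oplax structure, and the mere fact that its underlying morphism also admits some strong module structure does not make the given oplax structure invertible unless you know the two structures agree --- which is again the missing identification above. (Verifying invertibility directly would amount to checking the projection formula of \cref{p:projection-formula} for $f$ over $A$, an alternative route the paper mentions but which requires an actual computation, not an appeal to strongness.) Once the identification $\dual{f}\simeq f$ as right $A$-module maps is in place, the clean way to descend from $\RMod^\oplax_A(\cU)$ to $\RMod_A(\cU)$ is the paper's: both adjoints and the unit and counit then lie in the image of the locally full inclusion $\RMod_A(\cU)\to\RMod^\oplax_A(\cU)$ (\cref{l:funo-is-locally-full}), so the adjunction lives in $\RMod_A(\cU)$; no Beck--Chevalley argument is needed at that stage.
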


\begin{proof}
  \Cref{p:rigid-map-is-rigid,c:rigid-is-frob} imply that $f$ is a Frobenius algebra in $\Funlax(\Delta^1,\cU)$.
  \Cref{l:frob-is-mod-dual} then implies that $\dual{f}\simeq f$ as right $f$-modules (where we may use \cref{o:add-geometric-realizations} to assume the existence of geometric realizations that commute with the tensor product in each variable). 
  By restriction of scalars along the map of algebras $\id_A\to f$,
  it follows that $\dual{f}\simeq f$ as right $\id_A$-modules, which under the isomorphism of \cref{c:modules-in-lax-arrow-category}
  $$\RMod_{\id_A}(\Fun(\Delta^1,\cU))\simeq \Fun(\Delta^1,\RMod(\cU))$$
  implies that $\dual{f}\colon A\to B$ is $f$ with its natural structure as a map of right $A$-modules. 
  The result then follows from \cref{l:two-oplax-structures}, where we use the fact that $\RMod_A(\cU)\to \RMod_A^\oplax(\cU)$ is a locally full inclusion (\cref{l:funo-is-locally-full}).
\end{proof}

\begin{remark}
  An alternative approach to proving the adjunction in \cref{p:rigid-map-adjoint} is to verify the projection formula (\cref{p:projection-formula}).
  However, our chosen method has the advantage of directly showing that the resulting right $A$-module map structure on $\trans{f}$ coincides with the one obtained in \cref{c:trans-of-frob}.
\end{remark}

\begin{corollary}\label{c:rig-is-1-category}
  Every 2-morphism in $\Algrig(\cU)$ is invertible, i.e., $\Algrig(\cU)$ is a 1-category.
\end{corollary}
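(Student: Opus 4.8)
The plan is to reduce the statement to the adjunction established in \cref{p:rigid-map-adjoint}. Recall that a 2-category in which every 2-morphism is invertible is the same data as a 1-category (its mapping spaces, rather than mapping categories, are the relevant invariant); so it suffices to show that for any pair of rigid algebras $A, B \in \Algrig(\cU)$ and any two algebra maps $f, g \colon A \to B$, every 2-morphism of algebras $\alpha \colon f \Rightarrow g$ is an isomorphism. This is exactly statement (\ref{i:2-morphism-inv}) of \cref{t:restatable-rigid-algebras}, so the corollary is really just the observation that this property is equivalent to $\Algrig(\cU)$ being a 1-category.

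The key input is that, by \cref{p:rigid-map-adjoint}, both $f$ and $g$ are left adjoints: each is left adjoint to its transpose $\trans f, \trans g \colon B \to A$ (as maps of right $A$-modules, hence in particular in $\cU$, and in particular in $\Alg(\cU)$ after suitable bookkeeping). A 2-morphism $\alpha \colon f \Rightarrow g$ between two left adjoints has a \emph{mate} (right transpose) $\alpha^\vee \colon \trans g \Rightarrow \trans f$, obtained by the usual string-diagram manipulation using the units and counits of the two adjunctions: $\trans g \Rightarrow \trans g f \trans f \Rightarrow \trans g g \trans f \Rightarrow \trans f$. I would then argue that the existence of this mate, together with the fact that in $\Algrig(\cU)$ — being a full sub-2-category of $\Alg(\cU)$ — the relevant hom-categories are groupoids once we know one more ingredient, forces $\alpha$ to be invertible. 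Concretely, the cleanest route: since $f$ is left adjoint to $\trans f$ and $g$ is left adjoint to $\trans g$, and transposition is an involution, the mate construction is invertible — it gives a bijection between 2-morphisms $f \Rightarrow g$ and 2-morphisms $\trans g \Rightarrow \trans f$. But $\trans{(-)}$ applied to algebra maps of rigid (Frobenius) algebras, via \cref{c:trans-of-frob} and \cref{l:dbl-arrow}, is precisely the duality involution on the dualizable objects of $\Funlax(\Delta^1, \cU)$ restricted to algebra objects; since an algebra map between rigid algebras $f$ is self-dual as an object of that lax arrow category (\cref{p:rigid-map-is-rigid} together with \cref{c:rigid-is-frob}), one can identify $\alpha$ with a 2-morphism whose source and target are swapped, and iterating shows $\alpha$ is inverse (up to the coherence isomorphisms) to its own double mate.

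A more hands-on version, which I expect to be what the paper does: view $\alpha$ itself as living in the lax arrow category. An algebra map $f \colon A \to B$ is an object of $\Alg(\Funlax(\Delta^1,\cU))$, and a 2-morphism $\alpha \colon f \Rightarrow g$ of algebras is a 1-morphism in... more precisely, one packages $f, g$ and $\alpha$ into a single rigid algebra in an appropriate iterated lax functor category, applies \cref{p:rigid-map-is-rigid} again, and uses \cref{p:rigid-map-adjoint} to conclude that the component of $\alpha$ is simultaneously a left adjoint and a right adjoint in a way that forces it to be an equivalence. The invertibility then drops out of the zigzag identities: a morphism that is both the unit-side and counit-side of an adjunction with an isomorphism on one of the triangle identities is an isomorphism.

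The main obstacle I anticipate is the bookkeeping needed to make ``$\alpha$ is a morphism of rigid algebras, hence adjoint-to-its-transpose considerations apply'' precise — i.e.\ correctly identifying which lax functor 2-category $\alpha$ lives in as an object/morphism, and checking that the hypotheses of \cref{p:rigid-map-is-rigid} (that the relevant endpoints are rigid) are met at that level. Once the setup is right, the conclusion that every such 2-morphism is invertible should follow formally from the adjunction in \cref{p:rigid-map-adjoint} and the elementary fact that a 2-morphism that arises as (part of) an adjunction equivalence is invertible; from this, ``$\Algrig(\cU)$ is a 1-category'' is immediate, since a 2-category all of whose 2-morphisms are invertible and whose hom-categories are therefore $\infty$-groupoids is precisely a $1$-category in the sense used in this paper.
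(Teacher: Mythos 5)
There is a genuine gap: neither of your two routes actually supplies the step that forces invertibility, and the paper's proof rests on a point you never quite pin down. Your first route uses only that $f$ and $g$ are left adjoints (to $\trans{f}$, $\trans{g}$) and then appeals to the mate correspondence. This cannot suffice: between two left adjoint $1$-morphisms there are in general many non-invertible $2$-morphisms (already in $\Cat_1$, natural transformations between left adjoint functors need not be invertible), and the mate construction merely gives an equivalence between $2$-morphisms $f\Rightarrow g$ and $2$-morphisms $\trans{g}\Rightarrow\trans{f}$; no invertibility of $\alpha$ follows, and the claim that ``iterating shows $\alpha$ is inverse to its own double mate'' is not justified (nor true in this generality). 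Your second route is closer in spirit to the paper but mis-identifies the mechanism: you propose to package $f,g,\alpha$ into ``a single rigid algebra in an iterated lax functor category'' and to conclude via ``simultaneously a left adjoint and a right adjoint forces an equivalence'' and a statement about ``unit-side and counit-side'' of an adjunction. Being both a left and a right adjoint does not force an equivalence, and the latter principle is not a valid statement, so the decisive step is missing.

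What the paper actually does is shorter and uses the adjunction one categorical level up, applied to $\alpha$ itself. By \cref{p:rigid-map-is-rigid}, $f$ and $g$ are rigid algebras in $\Funlax(\Delta^1,\cU)$. A $2$-morphism of algebras $\alpha\colon f\Rightarrow g$ is exactly a $1$-morphism between these two objects of $\Alg(\Funlax(\Delta^1,\cU))$ (a lax square whose horizontal components are identities, compatibly with the algebra structures, via \cref{c:algebras-in-lax-arrow-category}). Hence \cref{p:rigid-map-adjoint}, applied in the symmetric monoidal $2$-category $\Funlax(\Delta^1,\cU)$, shows that $\alpha$ itself is a left adjoint $1$-morphism there; since it is a $2$-morphism of $\cU$ (its source and target components are identities, where the adjunction restricts to $\id\dashv\id$, and invertibility of $2$-cells in $\Funlax(\Delta^1,\cU)$ is detected on source and target), this adjunction forces $\alpha$ to be invertible. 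So the essential missing idea in your proposal is that it is $\alpha$, not just $f$ and $g$, that is an algebra map between rigid algebras and therefore adjoint; without that, the invertibility does not follow from the statements you invoke.
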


\begin{proof}
  Let $f,g\colon A\to B$ be maps in $\Algrig(\cU)$ and consider a 2-morphism $\alpha\colon f\Rightarrow g$.
  \Cref{p:rigid-map-is-rigid} implies that $f$ and $g$ are rigid algebras in $\Funlax(\Delta^1,\cU)$, hence $\alpha$ is an algebra map between rigid algebras.
  \Cref{p:rigid-map-adjoint} implies that $\alpha$ is left adjoint, so it follows from \cref{p:adj-lax-nat} that $\alpha$ is invertible.
\end{proof}

\subsection{Rigid commutative algebras}

In the second half of the paper we will consider rigidity in the context of commutative algebras.
Note that the map $\EE_1\to \EE_\infty$ induces a forgetful functor $(-)|_{\EE_1}\colon \CAlg(\cU)\to \Alg(\cU)$

\begin{definition}
  The category of \emph{rigid commutative algebras} is defined as the pullback
  $$\CAlgrig(\cU):=\CAlg(\cU)\times_{\Alg(\cU)} \Algrig(\cU).$$
  Thus, a commutative algebra $A\in \CAlg(\cU)$ is rigid if the underlying algebra $A|_{\EE_1}$ is rigid.
\end{definition}

Note that if $A$ is a commutative algebra in $\cU$, then there is a canonical isomorphism between $A|_{\EE_1}$ and $A|_{\EE_1}^\rev$, where the second has its multiplication reversed.
This in turn induces an isomorphism between left and right modules
$$\LMod_{A|_{\EE_1}}(\cU)\simeq \RMod_{A|_{\EE_1}}(\cU),$$
so we denote both sides by $\Mod_A(\cU)$, simply called \emph{$A$-modules}.
We thus translate \cref{p:rigid-map-adjoint} to the commutative setting:

\begin{corollary}\label{c:com-rigid-map-adjoint}
  Let $f\colon A\to B\in \CAlgrig(\cU)$ be a commutative algebra map between rigid commutative algebras, then $\trans{f}$ is right adjoint to $f$ in $\Mod_A(\cU)$.
\end{corollary}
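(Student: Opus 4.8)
The plan is to deduce this directly from Corollary~\ref{p:rigid-map-adjoint} by unwinding the definitions. Since $f\colon A\to B$ is a map of rigid commutative algebras, in particular $f|_{\EE_1}\colon A|_{\EE_1}\to B|_{\EE_1}$ is a map of rigid (associative) algebras, so Corollary~\ref{p:rigid-map-adjoint} applies and gives an adjunction $f\dashv \trans{f}$ in $\RMod_{A|_{\EE_1}}(\cU)$. The only thing to check is that this adjunction is compatible with the identification $\RMod_{A|_{\EE_1}}(\cU)\simeq \Mod_A(\cU)$ recorded just before the statement, coming from the canonical isomorphism $A|_{\EE_1}\simeq A|_{\EE_1}^{\rev}$. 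Concretely, I would first note that the forgetful functor $\CAlg(\cU)\to\Alg(\cU)$ (and likewise for bimodule/module categories) is symmetric monoidal, and that all the constructions entering Corollary~\ref{p:rigid-map-adjoint} — the transpose $\trans{f}$, the right $A$-module structures on $B$ and on $\trans{f}$, the unit and counit 2-morphisms produced via Lemma~\ref{l:two-oplax-structures} — are pulled back from $\cU$ and are insensitive to whether one views $A$ as an $\EE_1$- or $\EE_\infty$-algebra.

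The key steps, in order: (1) restrict $f$ along $\EE_1\to\EE_\infty$ to get $f|_{\EE_1}\in\Algrig(\cU)$, which is legitimate by the very definition of $\CAlgrig(\cU)$ as the pullback $\CAlg(\cU)\times_{\Alg(\cU)}\Algrig(\cU)$; (2) apply Corollary~\ref{p:rigid-map-adjoint} to obtain $f\dashv\trans{f}$ in $\RMod_{A|_{\EE_1}}(\cU)$; (3) observe that for $A$ commutative the equivalence $\LMod_{A|_{\EE_1}}(\cU)\simeq\RMod_{A|_{\EE_1}}(\cU)$ used to define $\Mod_A(\cU)$ is symmetric monoidal over $\cU$ and acts as the identity on underlying objects and morphisms in $\cU$, so transporting the adjunction of step~(2) across it changes nothing on underlying data; (4) conclude that $f\dashv\trans{f}$ in $\Mod_A(\cU)$, with the same unit and counit.

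I do not expect a genuine obstacle here — this is a translation corollary — but the one point deserving a sentence of care is step~(3): one should make sure the right $A$-module structure on $\trans{f}$ furnished by Corollary~\ref{p:rigid-map-adjoint} (which came from Lemma~\ref{l:trans-is-rmod} applied to $f$ as a map of right $A|_{\EE_1}$-modules) matches the $A$-module structure one would get by instead regarding $f$ as a map of \emph{left} $A$-modules and using the commutativity isomorphism. This holds because the isomorphism $A|_{\EE_1}\simeq A|_{\EE_1}^{\rev}$ is itself the image of the braiding, hence compatible with all the naturality used in the proof of Corollary~\ref{p:rigid-map-adjoint}. With that remark in place the corollary is immediate.
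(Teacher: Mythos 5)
Your argument is correct and is essentially the paper's own: the paper treats this as an immediate translation of \cref{p:rigid-map-adjoint}, using the definition of $\CAlgrig(\cU)$ as a pullback to restrict $f$ to a map of rigid associative algebras and the identification $\Mod_A(\cU)\simeq \RMod_{A|_{\EE_1}}(\cU)$ recorded just before the statement. Your extra care in step (3) is harmless but not needed, since $\Mod_A(\cU)$ is by definition (either side of) that equivalence, so the adjunction transports verbatim.
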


\begin{lemma}\label{l:forgetful-is-2conservative}
  The forgetful functors 
  \begin{equation*}
    (-)|_{\EE_1}\colon \CAlg(\cU)\to\Alg(\cU),\qquad
    (-)|_{\EE_1}\colon \CAlg^\lax(\cU)\to\Alg^\lax(\cU)
  \end{equation*}
  are conservative on 1-morphisms and 2-morphisms.
\end{lemma}

\begin{proof}
  The fact that $\CAlg(\cU)\to\Alg(\cU)$ is conservative on 1-morphisms follows from passing to the 1-core. 
  This also implies that $\CAlg^\lax(\cU)\to\Alg^\lax(\cU)$ is conservative on 1-morphisms:
  if a lax map of commutative algebras is invertible in $\Alg^\lax(\cU)$, then it is in particular strong, as it is enough to verify on the multiplication and unit.

  A 2-morphism in $\CAlg^\lax(\cU)$ gives a 1-morphism in the oplax arrow category, where by \cref{c:algebras-in-lax-arrow-category} we have an equivalence
  $$\Fun^\oplax(\Delta^1,\CAlg^\lax(\cU))\simeq \CAlg^\oplax(\Fun^\lax(\Delta^1,\cU)).$$
  The same holds for a 2-morphism in $\Alg^\lax(\cU)$. Thus, the fact that $\CAlg^\lax(\cU)\to\Alg^\lax(\cU)$ is conservative on 1-morphisms implies the result for 2-morphisms.
  Finally, it follows that $\CAlg(\cU)\to\Alg(\cU)$ is conservative on 2-morphisms, as $\CAlg(\cU)\to \CAlg^\lax(\cU)$ is a locally full inclusion (\cref{l:funo-is-locally-full}).
\end{proof}

\begin{corollary}\label{c:com-rig-is-1-category}
  Every 2-morphism in $\CAlgrig(\cU)$ is invertible, i.e., $\CAlgrig(\cU)$ is a 1-category.
\end{corollary}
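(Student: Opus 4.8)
The plan is to translate the proof of \cref{c:rig-is-1-category} to the commutative setting, replacing \cref{p:rigid-map-adjoint} by its commutative analogue \cref{c:com-rigid-map-adjoint}. The one new input is a commutative counterpart of \cref{p:rigid-map-is-rigid}: every commutative algebra map $f\colon A\to B$ between rigid commutative algebras is a rigid commutative algebra when regarded in the lax arrow category $\Funlax(\Delta^1,\cU)$. This follows from the associative statement together with the definitions. Indeed, by \cref{c:algebras-in-lax-arrow-category} applied to $\cO=\EE_\infty$, composed with the symmetric monoidal functor $\Fun(\Delta^1,\cU)\to\Funlax(\Delta^1,\cU)$, the map $f$ is a commutative algebra object of $\Funlax(\Delta^1,\cU)$; its underlying associative algebra is the associative algebra map $f|_{\EE_1}\colon A|_{\EE_1}\to B|_{\EE_1}$ regarded in $\Funlax(\Delta^1,\cU)$, which is rigid by \cref{p:rigid-map-is-rigid} because $A|_{\EE_1}$ and $B|_{\EE_1}$ are rigid. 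By the definition of $\CAlgrig$, this is precisely the assertion that $f$ is a rigid commutative algebra in $\Funlax(\Delta^1,\cU)$.

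Granting this, I would argue as follows. Let $\alpha\colon f\Rightarrow g$ be a $2$-morphism in $\CAlgrig(\cU)$ between parallel maps $f,g\colon A\to B$. By the previous paragraph $f$ and $g$ are rigid commutative algebras in $\Funlax(\Delta^1,\cU)$, and — exactly as in the proof of \cref{c:rig-is-1-category} — $\alpha$ is realized as a commutative algebra map between them. \Cref{c:com-rigid-map-adjoint} then exhibits $\alpha$ as a left adjoint. In the lax arrow category $\alpha$ is encoded as a lax square with identity horizontal legs whose $2$-cell is $\alpha$ itself; since a left adjoint lax natural transformation is strong (\cref{p:adj-lax-nat}), this $2$-cell is invertible, i.e., $\alpha$ is invertible. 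Hence every $2$-morphism of $\CAlgrig(\cU)$ is invertible, so $\CAlgrig(\cU)$ is a $1$-category.

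The main obstacle here is organizational rather than conceptual: one must keep track of how a $2$-morphism of commutative algebras is encoded as a $1$-morphism in the lax arrow category of commutative algebras and check that this is compatible with passing to the underlying associative structure — but this is the same bookkeeping already carried out for \cref{c:rig-is-1-category}, so no genuinely new argument is needed. As an alternative that sidesteps the lax arrow category, one can note that $\CAlgrig(\cU)=\CAlg(\cU)\times_{\Alg(\cU)}\Algrig(\cU)$ exhibits the projection $\CAlgrig(\cU)\to\Algrig(\cU)$ as a base change of the forgetful functor $\CAlg(\cU)=\Funo(\Env(\EE_\infty),\cU)\to\Funo(\Env(\EE_1),\cU)=\Alg(\cU)$, which is restriction along the identity-on-objects functor $\Env(\EE_1)\to\Env(\EE_\infty)$; since invertibility of a $2$-morphism in a functor $2$-category $\Funo(-,\cU)$ is detected on its components at objects, this functor — and hence its base change $\CAlgrig(\cU)\to\Algrig(\cU)$ — reflects invertibility of $2$-morphisms, so the claim follows at once from \cref{c:rig-is-1-category}.
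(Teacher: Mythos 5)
Your proposal is correct, but note that your closing ``alternative'' is in fact the paper's entire proof: the paper simply observes that the forgetful functor $(-)|_{\EE_1}\colon \CAlg(\cU)\to\Alg(\cU)$ is conservative on 2-morphisms (invertibility of a 2-morphism of symmetric monoidal functors is detected on its components at objects, and $\Env(\EE_1)\to\Env(\EE_\infty)$ hits every object up to isomorphism), so invertibility of a 2-morphism in $\CAlgrig(\cU)=\CAlg(\cU)\times_{\Alg(\cU)}\Algrig(\cU)$ is inherited directly from \cref{c:rig-is-1-category} -- exactly your pullback/base-change remark. Your primary route -- rerunning the lax-arrow argument in the commutative setting, i.e.\ showing that a commutative algebra map between rigid commutative algebras is a rigid commutative algebra in $\Funlax(\Delta^1,\cU)$ (via \cref{c:algebras-in-lax-arrow-category} for $\EE_\infty$ plus \cref{p:rigid-map-is-rigid} on underlying associative algebras), then applying \cref{c:com-rigid-map-adjoint} and invoking \cref{p:adj-lax-nat} to conclude that a left adjoint square with identity legs is strong -- is also sound, and it has the merit of making explicit the step the paper leaves terse in \cref{c:rig-is-1-category} (``left adjoint implies invertible''). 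Its cost is that it duplicates work: it reproves a commutative analogue of \cref{p:rigid-map-is-rigid} and carries the same encoding bookkeeping (realizing a 2-morphism of algebras as a strong algebra map in the lax arrow category) that the paper already shoulders once in the associative case, whereas the conservativity reduction requires nothing commutative-specific; this bookkeeping is glossed at the same level of detail as in the paper's own proof of the associative statement, so it is not a new gap.
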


\begin{proof}
  Follows from \cref{c:rig-is-1-category,l:forgetful-is-2conservative}.
\end{proof}

\section{Cospans}\label{s:cospans}

Using our extended definition, we can consider rigid algebras in 2-categories which are not of the form ``categories-of-categories''.
For the remainder of the paper, we focus our attention on the 2-category of cospans.
In this section we study cospans and rigid commutative algebras therein, and in the next section we will see that this example is in a certain sense universal.

Fix a 1-category $\cC$ with finite colimits.
For the definition of the 2-category of cospans in $\cC$, which we denote $\coSpan(\cC)$, we follow \cite{gaitsgory2019study}, \cite{stefanich2020highersheaftheoryi} and \cite{Macpherson2020ABY}.
The lower dimensional cells of $\coSpan(\cC)$ are described as follows:
\begin{itemize}
  \item The objects are the same as in $\cC$.
  \item The morphisms  
  $\begin{tikzcd}[column sep=small]
    A & B
    \arrow[dotted, from=1-1, to=1-2]
  \end{tikzcd}$ 
  are cospans in $\cC$
  \[\begin{tikzcd}
  & X  &   \\
  A\arrow[ru] & & B,\arrow[lu]
  \end{tikzcd}\]
  with composition given by pushouts
  \[\begin{tikzcd}
    && {X\sqcup_B Y} \\
    & X && Y \\
    A && B && C.
    \arrow["\lrcorner"{anchor=center, pos=0.125, rotate=-45}, draw=none, from=1-3, to=3-3]
    \arrow[from=2-2, to=1-3]
    \arrow[from=2-4, to=1-3]
    \arrow[from=3-1, to=2-2]
    \arrow[from=3-3, to=2-2]
    \arrow[from=3-3, to=2-4]
    \arrow[from=3-5, to=2-4]
  \end{tikzcd}\]
  \item The 2-morphisms are maps between cospans
  \[\begin{tikzcd}
    & X\arrow[dashed,dd] &   \\
    A\arrow[ru]\arrow[rd] & & B\arrow[lu]\arrow[ld]\\
    & Y. &
  \end{tikzcd}\]
\end{itemize}
Moreover, $\coSpan(\cC)$ has a symmetric monoidal structure given by coproducts in $\cC$, with unit the initial object $\emptyset\in \cC$.
Note that the coproducts and initial object in $\cC$ are not the coproducts and initial object in $\coSpan(\cC)$.

\begin{remark}
  It is more common to work with the span 2-category $\Span(\cC)$, which has the same objects, the morphisms are spans, and the 2-morphisms are maps between spans.
  The definitions of $\Span(\cC)$ and $\coSpan(\cC)$ are related by
  $$\coSpan(\cC)=\Span(\cC^{\op})^{\op_2}.$$
\end{remark}

\begin{remark}
  There is another variant of the 2-category of cospans, called 2-cospans, where the 2-morphisms are themselves cospans. 
  The version we consider is sometimes called 1.5-cospans, as we only take one half of the 2-morphisms.
\end{remark}

Consider the functor $\cC\to \coSpan(\cC)$ defined in e.g.\ \cite{Macpherson2020ABY}, which we call the \emph{right-way functor}, that acts as the identity on objects and sends a map $f\colon A\to B\in \cC$ to the right-way map 
\[\begin{tikzcd}
    & B  &   \\
    A\arrow["f",ru] & & B\arrow[lu,equal].
\end{tikzcd}\]
The right-way functor is symmetric monoidal with the cocartesian structure on $\cC$, so in particular it factors through commutative algebras 
$$\cC\simeq \CAlg(\cC)\to \CAlg(\coSpan(\cC)).$$ 
This gives every $A\in \cC$ a canonical commutative algebra structure in $\coSpan(\cC)$, with multiplication given by the fold map and unit given by the initial map:
\[\begin{tikzcd}
	& A &&&& A \\
	{A\sqcup A} && A && \emptyset && {A}.
	\arrow[from=2-1, to=1-2, "\nabla"]
	\arrow[from=2-3, to=1-2,equal]
	\arrow[from=2-5, to=1-6]
	\arrow[from=2-7, to=1-6, equal]
\end{tikzcd}\]
Our goal in this section is to prove that this canonical commutative algebra structure is rigid, and moreover that the right-way functor induces an isomorphism
$$\cC\isoto\CAlgrig(\coSpan(\cC)).$$

\subsection{The universal property of cospans}

The 2-category $\coSpan(\cC)$ has a universal property described in \cite{gaitsgory2019study,stefanich2020highersheaftheoryi,Macpherson2020ABY}.
As part of this property, it is implied that the right-way maps are left adjoints in $\coSpan(\cC)$.
We will also require the converse implication:

\begin{lemma}\label{l:ladj_cospan}
  A cospan $A\to X\leftarrow B$ is left adjoint in  $\coSpan(\cC)$ if and only if the wrong-way component is invertible $X\xleftarrow{\sim} B$, i.e., it is equivalent to a right-way map.
\end{lemma}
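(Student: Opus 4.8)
The plan is to establish both directions by leveraging the universal property of $\coSpan(\cC)$, together with the general theory of adjunctions in $\coSpan(\cC)$ that it provides. The ``if'' direction is already essentially part of the universal property (the right-way functor sends every morphism of $\cC$ to a left adjoint in $\coSpan(\cC)$), so the content is the ``only if'' direction: a left adjoint cospan must have invertible wrong-way component.

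First I would reduce to an explicit description of the right adjoint. Suppose a cospan $\phi\colon A\to X\leftarrow B$ has a right adjoint $\psi\colon B\to Y\leftarrow A$, with unit $u\colon \id_A \Rightarrow \psi\circ\phi$ and counit $c\colon \phi\circ\psi\Rightarrow\id_B$. The composite $\psi\circ\phi$ is the cospan $A\to X\sqcup_B Y\leftarrow A$, and the unit 2-morphism is a map of cospans from $\id_A = (A\xrightarrow{=}A\xleftarrow{=}A)$ to it; similarly the counit is a map of cospans from $\phi\circ\psi = (B\to Y\sqcup_A X\leftarrow B)$ to $\id_B$. Spelling out what a map of cospans is in $\coSpan(\cC)$ (a single map on apexes commuting with both legs), the unit gives a map $A\to X\sqcup_B Y$ compatible with the structure maps, and the counit gives a map $Y\sqcup_A X\to B$ compatible with the structure maps. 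Then I would write out the two zigzag identities as equalities of maps of cospans and extract, from the triangle identities, a concrete map $B\to X$ together with the data showing it is inverse (as a wrong-way component) to the given map $X\leftarrow B$.

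Concretely, the key computation is this: the counit map $Y\sqcup_A X\to B$ restricted along the canonical map $X\to Y\sqcup_A X$ gives a map $g\colon X\to B$; I claim $g$ is inverse to the wrong-way leg $b\colon B\to X$ of $\phi$. One composite, $g\circ b\colon B\to B$, is identified with the identity by the interaction of the counit with the structure maps of $\phi\circ\psi$. For the other composite $b\circ g\colon X\to X$, I would use the first zigzag identity: composing the unit and counit appropriately, the identity 2-morphism on $\phi$ factors through $X\sqcup_B Y$ and $Y\sqcup_A X$, and tracing the apex maps forces $b\circ g$ to be the identity on $X$ — here is where one genuinely uses that we are in a pushout and that the zigzag composite is the identity map of cospans, not merely isomorphic to it. Alternatively, and perhaps more cleanly, I would invoke the factorization system of \cite[Proposition 4.9]{haugseng2023twovariablefibrationsfactorisationsystems} on $\coSpan_1(\cC)$ into right-way and wrong-way maps — together with the standard fact that in any $2$-category a morphism that is both a left adjoint and lies in the right class of an orthogonal factorization system (here the wrong-way maps, which one checks are closed under the relevant $2$-categorical operations) must already lie in the left class. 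That is, decompose the cospan $\phi$ as (wrong-way)$\circ$(right-way); the right-way part is a left adjoint by the universal property, so the wrong-way part is a left adjoint; but a wrong-way map that is a left adjoint is easily seen to be invertible, since its would-be right adjoint composed with it yields identities forcing invertibility.

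The main obstacle I anticipate is handling the $2$-categorical coherence carefully: ``left adjoint'' in a genuine $(\infty,2)$-category is not just the existence of unit and counit but the coherent system of higher simplices witnessing the triangle identities, so extracting an honest inverse in $\cC$ from this data requires either a clean model-independent argument (the factorization-system approach above, which packages the coherence into a known statement) or an appeal to the fact that being invertible is a proposition so it suffices to check on homotopy categories, where the elementary computation with the two zigzags suffices. I would favor the factorization-system route: it isolates the one nontrivial input (the orthogonal factorization system on $\coSpan_1(\cC)$, already cited in the proof of \cref{p:cospan-under-unit-ladj}) and makes the rest formal. One should also double-check that the wrong-way maps form a subcategory stable enough for ``right class of a factorization system'' reasoning at the $2$-categorical level, or else simply carry out the homotopy-category computation, which is short.
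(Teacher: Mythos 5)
Your second paragraph -- extracting from the counit a map $g\colon X\to B$ with $g\circ b\simeq\id_B$, and then using one zigzag identity, traced on apexes through the pushouts, to conclude $b\circ g\simeq \id_X$ -- is exactly the paper's proof of the ``only if'' direction, and it goes through as you sketch it. One small correction there: you do \emph{not} need the zigzag composite to be the identity map of cospans ``not merely isomorphic to it''; in the $\infty$-categorical setting you only ever have (and only need) an isomorphism to the identity, since a map admitting a retraction and a section up to homotopy is already invertible. Your treatment of the ``if'' direction via the universal property is also fine (the paper spells out the unit and counit explicitly, but notes the same point).

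The route you say you would favor, however, does not work. First, the ``standard fact'' you invoke is false in general: in $\Cat_1$ with the (essentially surjective, fully faithful) factorization system, the inclusion of a coreflective full subcategory (e.g.\ groupoids into categories) is a left adjoint lying in the right class but not in the left class. Second, the reduction itself is a non sequitur: writing $\phi=w\circ r$ with the right-way part $r$ a left adjoint and $\phi$ a left adjoint does not imply that the wrong-way part $w$ is a left adjoint -- left adjoints are not cancellable this way, and one cannot recover $w$ as $\phi\circ r^R$ because $r\circ r^R$ is the cospan $B\to B\sqcup_A B\leftarrow B$, not the identity. Third, even granting the reduction, the claim that a wrong-way map which is a left adjoint is invertible is precisely \cref{l:ladj_cospan} restricted to wrong-way maps, and proving it requires the same unit/counit/zigzag computation, so the factorization-system detour saves nothing and is in danger of circularity. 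Stick with the direct computation; it is the argument the paper actually gives.
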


\begin{proof}
  For the ``if'' direction, let $A\xrightarrow{f} B= B$ be a right-way map.
  Its right adjoint is the mirror image wrong-way map $B=B\xleftarrow{f} A$, the unit of the adjunction is given by taking $f$ as a 2-morphism, and the counit is given by the fold map:
  \[\begin{tikzcd}
    && A \\
    && B \\
    & B && B \\
    A && B && A
    \arrow["f", dashed, from=1-3, to=2-3]
    \arrow["\lrcorner"{anchor=center, pos=0.125, rotate=-45}, draw=none, from=2-3, to=4-3]
    \arrow[equals, from=3-2, to=2-3]
    \arrow[equals, from=3-4, to=2-3]
    \arrow[curve={height=-18pt}, equals, from=4-1, to=1-3]
    \arrow["f", from=4-1, to=3-2]
    \arrow[equals, from=4-3, to=3-2]
    \arrow[equals, from=4-3, to=3-4]
    \arrow[curve={height=18pt}, equals, from=4-5, to=1-3]
    \arrow["f"', from=4-5, to=3-4]
  \end{tikzcd}
  \hspace{15pt}
  \begin{tikzcd}
    && B \\
    && {B\sqcup_A B} \\
    & B && B \\
    B && A && {B.}
    \arrow[dashed, from=2-3, to=1-3]
    \arrow["\lrcorner"{anchor=center, pos=0.125, rotate=-45}, draw=none, from=2-3, to=4-3]
    \arrow[from=3-2, to=2-3]
    \arrow[from=3-4, to=2-3]
    \arrow[curve={height=-18pt}, equals, from=4-1, to=1-3]
    \arrow[equals, from=4-1, to=3-2]
    \arrow["f"', from=4-3, to=3-2]
    \arrow["f", from=4-3, to=3-4]
    \arrow[curve={height=18pt}, equals, from=4-5, to=1-3]
    \arrow[equals, from=4-5, to=3-4]
  \end{tikzcd}\]
  The verification of the zigzag identities is straightforward, see e.g.\ \cite[Proposition 3.3.1]{stefanich2020highersheaftheoryi} for details.

  For the ``only if'' direction, suppose that the cospan on the left has a right adjoint given by the cospan on the right:
  \[\begin{tikzcd}[sep=scriptsize]
    & X &&&& Y \\
    A && B && B && {A.}
    \arrow["f", from=2-1, to=1-2]
    \arrow[shorten <=14pt, shorten >=14pt, dotted, from=2-1, to=2-3]
    \arrow["g"', from=2-3, to=1-2]
    \arrow["h", from=2-5, to=1-6]
    \arrow[shorten <=14pt, shorten >=14pt, dotted, from=2-5, to=2-7]
    \arrow["k"', from=2-7, to=1-6]
  \end{tikzcd}\]
  The counit is a map $Y\sqcup_A X\to B$ such that the following commutes:
  \[\begin{tikzcd}
	&& B \\
	&& {Y\sqcup_A X} \\
	& Y && X \\
	B && A && {B.}
	\arrow[from=2-3, to=1-3,dashed]
	\arrow["\lrcorner"{anchor=center, pos=0.125, rotate=-45}, draw=none, from=2-3, to=4-3]
	\arrow[from=3-2, to=2-3]
	\arrow[from=3-4, to=2-3]
	\arrow[curve={height=-18pt}, equals, from=4-1, to=1-3]
	\arrow["k", from=4-1, to=3-2]
	\arrow["h"', from=4-3, to=3-2]
	\arrow["f", from=4-3, to=3-4]
	\arrow[curve={height=18pt}, equals, from=4-5, to=1-3]
	\arrow["g"', from=4-5, to=3-4]
\end{tikzcd}\]
  Consider the compositions $r_k\colon Y\to Y\sqcup_A X\to B$ and $r_g\colon X\to  Y\sqcup_A X\to B$ from the diagram above; it follows that $r_k$ and $r_g$ are retracts of $k$ and $g$ respectively.
  As a map from the pushout, the counit is determined by $r_k$ and $r_g$ (and commutation data which we keep implicit), so we denote it by $[r_k,r_g]\colon Y\sqcup_A X\to B$.

  The unit is a map $A\to X\sqcup_B Y$ with a similar commuting diagram, and the zigzag identity tells us that the composition 
  $$X\to X\sqcup_B Y\sqcup_A X\to X$$
  is isomorphic to the identity.
  Explicitly, the first map is the inclusion to the third coordinate
  $i_3\colon X\to X\sqcup_B Y\sqcup_A X$, and the second map is given by $[\id_X,gr_k,gr_g]\colon X\sqcup_B Y\sqcup_A X\to X$.
  Their composition is then given by  
  $$[\id_X,gr_k,gr_g]\circ i_3\simeq gr_g.$$
  Knowing that the composition is isomorphic to the identity, we conclude that $r_g$ is also a section of $g$, hence $g$ is invertible.
\end{proof}

It follows from \cref{l:ladj_cospan} that the right-way functor $\cC\to \coSpan(\cC)$ sends every map in $\cC$ to a left adjoint map in $\coSpan(\cC)$.
In fact, the right-way functor satisfies a stronger condition called \emph{cobase change}:

\begin{definition}
  Let $\cU\in \Cat_2$ be a 2-category. 
  A functor $F:\cC\to \cU$ is said to satisfy \emph{cobase change} (or the co-Beck-Chevalley property) if the following conditions hold:
  \begin{enumerate}
    \item For every map $f:A\to B\in \cC$, its image $F(f):F(A)\to F(B)$ has a right adjoint in $\cU$.
    \item For every pushout square in $\cC$
    \[\begin{tikzcd}
        A & B \\
        C & P,
        \arrow["f", from=1-1, to=1-2]
        \arrow["g"', from=1-1, to=2-1]
        \arrow["{g'}", from=1-2, to=2-2]
        \arrow["{f'}"', from=2-1, to=2-2]
        \arrow["\lrcorner"{anchor=center, pos=0.125, rotate=180}, draw=none, from=2-2, to=1-1]
    \end{tikzcd}\]
    with image in $\cU$ given by
    \[\begin{tikzcd}
      {F(A)} & {F(B)} \\
      {F(C)} & {F(P),}
      \arrow["{F(f)}", from=1-1, to=1-2]
      \arrow["{F(g)}"', from=1-1, to=2-1]
      \arrow["{F(g')}", from=1-2, to=2-2]
      \arrow["{F(f')}"', from=2-1, to=2-2]
    \end{tikzcd}\]
    the corresponding Beck-Chevalley 2-morphism $F(g)F(f)^R\Rightarrow F(f')^RF(g')$ is invertible.
  \end{enumerate}
\end{definition}

The universal property of cospans states that the right-way functor $\cC\to \coSpan(\cC)$ is the initial functor from $\cC$ satisfying cobase change (\cite[Theorem 4.2.6]{stefanich2020highersheaftheoryi}).
This also lifts to the symmetric monoidal setting.

\begin{proposition}[{\cite[Corollary 1.2.2]{stefanich2020highersheaftheoryi}, \cite[Theorem 4.4.6]{Macpherson2020ABY}}]\label{p:universal-propertey-of-cospans}
  Let $\cU\in \Catsm_2$ be a symmetric monoidal 2-category.
  Precomposition with the right-way functor $\cC\to \coSpan(\cC)$ induces an isomorphism between $\Mapo(\coSpan(\cC),\cU)$ and the subspace of $\Mapo(\cC,\cU)$ consisting of functors satisfying cobase change.
\end{proposition}

\subsection{Rigid commutative algebras in cospans}
In this subsection, we prove that the right-way functor induces an isomorphism
$$\cC\isoto \CAlgrig(\coSpan(\cC)).$$
The first step is to show that the right-way functor $\cC\to \CAlg(\coSpan(\cC))$ lands in rigid commutative algebras.

\begin{lemma}\label{l:can-is-rig}
  The canonical commutative algebra structure on every $A\in \coSpan(\cC)$ is rigid.
\end{lemma}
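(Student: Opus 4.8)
The plan is to verify the two conditions in the definition of a rigid algebra directly for the canonical commutative algebra structure on $A\in \coSpan(\cC)$, whose unit is $\emptyset\to A \xleftarrow{=} A$ and whose multiplication is the fold cospan $A\sqcup A\xrightarrow{\nabla} A \xleftarrow{=} A$. Both of these are right-way maps, so by \cref{l:ladj_cospan} each has a right adjoint in $\coSpan(\cC)$; the right adjoint of the unit $\eta$ is the wrong-way cospan $A = A \xleftarrow{=}\emptyset$ — wait, more precisely $\epsilon$ is the mirror $A\xleftarrow{=} A \to \emptyset$, i.e. $A\to\emptyset$ as a wrong-way map — and the right adjoint $\delta$ of $\mu=\nabla$ is the wrong-way diagonal cospan $A \xleftarrow{=} A \xrightarrow{\Delta} A\sqcup A$. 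This immediately gives condition (1), that $\eta$ has a right adjoint in $\cU = \coSpan(\cC)$.

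For condition (2) I must show that $\delta$ is not merely a right adjoint of $\mu$ in $\coSpan(\cC)$, but a right adjoint in the $2$-category $\Bmod[A]{A}(\coSpan(\cC))$ of $A$-bimodules. By the projection formula (\cref{p:projection-formula}), since $\mu\colon A\otimes A\to A$ is a map of $A$-bimodules and already has a right adjoint in $\coSpan(\cC)$, it automatically has a right adjoint in $\Bmod[A]{A}^\lax(\coSpan(\cC))$, and the adjunction upgrades to $\Bmod[A]{A}(\coSpan(\cC))$ precisely when the canonical lax-linearity $2$-morphism
$$\beta_M(\id_A\otimes \delta\otimes \id_A)\xRightarrow{u_\mu}\mu^R\mu\,\beta_M(\id_A\otimes\delta\otimes\id_A)\xRightarrow{\sim}\delta\,\beta_N(\id_A\otimes\mu\delta\otimes\id_A)\xRightarrow{c_\mu}\delta\,\beta_N$$
is invertible (here $M = A\otimes A$, $N=A$ with their bimodule structures, $\mu^R=\delta$). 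So the task reduces to checking invertibility of this single $2$-morphism in $\coSpan(\cC)$.

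The way I would carry this out is to unwind everything explicitly in cospan terms. In $\coSpan(\cC)$ the relevant $1$-morphisms — the bi-actions $\beta$, the maps $\mu,\delta$, and their tensor products — are all built from fold maps, diagonals, and identities, so each composite in the display above is a cospan whose legs are coproducts of copies of $A$ connected by fold/diagonal maps; the unit $u_\mu$ and counit $c_\mu$ are the $2$-morphisms exhibited in the proof of \cref{l:ladj_cospan} (a map taken as a $2$-morphism, and a fold map). One then computes both the source cospan $\beta_M(\id_A\otimes\delta\otimes\id_A)$ and the target cospan $\delta\beta_N$ and checks that the induced $2$-morphism between them is an isomorphism of cospans, i.e. its vertical component in $\cC$ is invertible. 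I expect this to amount to a pushout computation: a diagram like $A\sqcup A \leftarrow A \to A$, glued against the fold maps, reassembles to give exactly the target, with all the pasted pushout squares in $\cC$ collapsing because the legs involved are identities or split. Concretely, the heart of it is that the pushout of $A \xleftarrow{\nabla} A\sqcup A \xrightarrow{\id\sqcup\Delta} A\sqcup(A\sqcup A)$ computes to $A\sqcup A$ in a way compatible with the diagonal, reflecting the fact that comultiplication of the fold commutative Frobenius structure on $A$ is coassociative and Frobenius-compatible on the nose in $\cC$.

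The main obstacle will be bookkeeping rather than conceptual difficulty: organizing the iterated pushouts defining the composite cospans and matching them carefully against the pasted-square definition of the Beck–Chevalley $2$-morphism, so that one can see the comparison map is carried by an isomorphism in $\cC$. A cleaner alternative, which I would mention, is to invoke the universal property of cospans (\cref{p:universal-propertey-of-cospans}): the right-way functor $\cC\to\coSpan(\cC)$ sends the canonical commutative Frobenius algebra structure that every object of $\cC$ carries (with respect to the cocartesian symmetric monoidal structure, where $\epsilon=\trans\eta$ and $\delta=\trans\mu$ by \cref{c:frob-equiv-explicit}) to a Frobenius algebra in $\coSpan(\cC)$; since a Frobenius structure makes the comultiplication a bimodule map whose composite with $\epsilon\otimes\id$ and $\id\otimes\epsilon$ is the identity, and since by \cref{l:ladj_cospan} $\mu$ and $\eta$ are left adjoints with $\delta=\mu^R$, $\epsilon=\eta^R$, this exhibits exactly the data required for rigidity. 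I would use the explicit pushout check to pin down that $\delta$ really is the bimodule right adjoint of $\mu$, but note that the Frobenius viewpoint explains why it must be so.
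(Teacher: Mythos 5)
Your overall strategy is the paper's: use \cref{l:ladj_cospan} to get right adjoints for the unit and multiplication, reduce condition (2) to the projection formula of \cref{p:projection-formula}, and then verify invertibility of the resulting 2-morphism by an explicit cospan computation. You also correctly flag the key subtlety, namely that it is not enough for the source and target cospans to be abstractly isomorphic; one must check that the \emph{specific} composite built from $u_\mu$ and $c_\mu$ is invertible. However, that check is exactly the substance of the paper's proof, and you leave it unexecuted; moreover your sketch of it is not correct as written. In $\cC$ (finite colimits, no limits assumed) there is no diagonal $\Delta\colon A\to A\sqcup A$, so your description of $\delta$ as a ``wrong-way diagonal cospan'' and the pushout $A \xleftarrow{\nabla} A\sqcup A \xrightarrow{\id\sqcup\Delta} A\sqcup(A\sqcup A)$ do not exist: you are importing the span picture (where the diagonal $A\to A\times A$ appears) into cospans, where everything is built from fold maps. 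The correct $\delta$ is the wrong-way fold cospan $A = A \xleftarrow{\nabla} A\sqcup A$, and the actual computation shows that the unit 2-morphism is the coordinate inclusion $i_1\colon A\to A\sqcup_{A\sqcup A}A$, the counit is the fold $A\sqcup_{A\sqcup A}A\to A$, and their composite is the identity; without carrying this out the proof is incomplete.

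The ``cleaner alternative'' you offer does not work either. An object of $\cC$ with the cocartesian structure is canonically a commutative algebra but not a Frobenius algebra: a counit would be a map $A\to\emptyset$, which generally does not exist (the Frobenius structure lives in $\coSpan_1(\cC)$, cf.\ \cref{r:cospan-can-is-frob}, not in $\cC$). More importantly, even granting a Frobenius structure, this does not ``exhibit exactly the data required for rigidity'': a Frobenius structure provides $\delta$ as a bimodule map with counitality (self-duality data), whereas rigidity demands that $\mu\dashv\delta$ as an adjunction in $\Bmod[A]{A}(\coSpan(\cC))$, i.e.\ that the projection-formula 2-cell is invertible. The paper proves rigid $\Rightarrow$ Frobenius (\cref{c:rigid-is-frob}), not the converse, so the Frobenius viewpoint cannot replace the explicit verification you deferred.
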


\begin{proof}
  The canonical commutative algebra structure on $A\in\coSpan(\cC)$ comes via the right-way functor, so by \cref{l:ladj_cospan} the multiplication and unit maps are left adjoints.
  It remains to check that the multiplication map is left adjoint in $A$-bimodules.

  Recall that the multiplication map is the image under the right-way functor of the fold map $\nabla\colon A\sqcup A\to A$.
  To show that the multiplication map satisfies the projection formula,
  consider the following pushout square in $\cC$:
  \[\begin{tikzcd}[column sep=large]
    {A\sqcup A\sqcup A\sqcup A} & {A\sqcup A\sqcup A} \\
    {A\sqcup A} & A
    \arrow["{\id\sqcup \nabla \sqcup \id}", from=1-1, to=1-2]
    \arrow["{\nabla\sqcup \nabla}"', from=1-1, to=2-1]
    \arrow["{\nabla_3}", from=1-2, to=2-2]
    \arrow["\nabla"', from=2-1, to=2-2]
    \arrow["\lrcorner"{anchor=center, pos=0.125, rotate=180}, draw=none, from=2-2, to=1-1]
  \end{tikzcd}\]
  and its image in $\coSpan(\cC)$ under the right-way functor.
  By \cref{p:projection-formula}, it suffices to check that the resulting commuting square in $\coSpan(\cC)$ has an invertible Beck-Chevalley 2-morphisms.
  This follows from the cobase change property of the right-way functor.
\end{proof}

Denote by $\coSpan_1(\cC):=\coSpan(\cC)^{\simeq_1}$ the 1-category of cospans, originally defined in \cite{barwick2014spectralmackeyfunctorsequivariant}.
The right-way functor lands in the 1-core $\cC\to \coSpan_1(\cC)$, and is symmetric monoidal with respect to the cocartesian structure on $\cC$, so it factors through commutative algebras $\cC\to \CAlg(\coSpan(\cC))$.

\begin{remark}\label{r:cospan-can-is-frob}
  The image of $A\in \cC$ under $\cC\to \CAlg(\coSpan_1(\cC))$ can be promoted canonically (but not uniquely, see \cite{Contreras_2022}) to a Frobenius algebra via the counit 
  \[\begin{tikzcd}
    & A  &   \\
    A\arrow[ru,equal] & & \emptyset\arrow[lu].
    \end{tikzcd}\] 
  Note that this counit is the mirror image of the unit; the resulting self-duality of any $A,B\in\coSpan_1(\cC)$ is such that the transpose of a cospan $A\to X\leftarrow B$ is the mirror image cospan $B\to X\leftarrow A$.
  By moving to the 2-categorical setting, we realize this canonical Frobenius algebra structure as arising   from rigidity.
  In particular, note that every right-way cospan $A\to B = B$ is a map of commutative algebras, so by \cref{c:com-rigid-map-adjoint} it is left adjoint to its transpose, and indeed both the right adjoint and the  transpose are given by the mirror image.
\end{remark}

\begin{lemma}\label{l:1-cat-right-way-fully-faithful}
  The 1-categorical right-way functor $\cC\to \CAlg(\coSpan_1(\cC))$ is fully faithful, with 
  essential image those $A\in \CAlg(\coSpan_1(\cC))$ whose unit map $\emptyset\to X\leftarrow A$ is a right-way map, i.e., $X\leftarrow A$ is an isomorphism.
\end{lemma}

\begin{proof}
  There is a factorization system on $\coSpan_1(\cC)$ given by the right-way and wrong-way maps (\cite[Proposition 4.9]{haugseng2023twovariablefibrationsfactorisationsystems}).
  This induces a factorization system on $\coSpan(\cC)_{\emptyset/}$,
  so by \cite[Lemma 3.1.19]{anel2022leftexactlocalizations}, $\cC\to \coSpan_1(\cC)_{\emptyset/}$ is fully faithful with essential image those cospans $\emptyset \to X\leftarrow A$ where $X\leftarrow A$ is an isomorphism.

  As $\emptyset\in \coSpan(\cC)$ is the unit, $\coSpan(\cC)_{\emptyset/}$ inherits a symmetric monoidal strucure and $\cC\hookrightarrow \coSpan(\cC)_{\emptyset/}$ is symmetric monoidal with the cocartesian structure on $\cC$.
  Taking $\CAlg$ on both sides, we get
  $$\cC\simeq \CAlg(\cC)\hookrightarrow \CAlg(\coSpan_1(\cC)_{\emptyset/})\simeq \CAlg(\coSpan_1(\cC)),$$
  where the last isomorphism promotes $A\in \CAlg(\coSpan_1(\cC))$ to an object of $\coSpan_1(\cC)_{\emptyset/}$ using the unit map.
  
\end{proof}

\begin{proposition} \label{p:can-equiv-rig}
  The right-way functor $\cC\to \CAlg(\coSpan(\cC))$ factors as an isomorphism through rigid commutative algebras $\cC\isoto \CAlgrig(\coSpan(\cC))$.
\end{proposition}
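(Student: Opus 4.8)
The plan is to identify $\CAlgrig(\coSpan(\cC))$ with the essential image of the right-way functor into $\CAlg(\coSpan_1(\cC))$, and then invoke \cref{p:right-way-map-coreflection}. By \cref{l:can-is-rig} the right-way functor $\cC\to\CAlg(\coSpan(\cC))$ factors through $\CAlgrig(\coSpan(\cC))$, so there is a functor $\cC\to\CAlgrig(\coSpan(\cC))$ to analyze. By \cref{c:com-rig-is-1-category} the target is a $1$-category, and, being a full sub-$2$-category of $\CAlg(\coSpan(\cC))$, it is the full subcategory on the rigid commutative algebras of the underlying $(\infty,1)$-category of $\CAlg(\coSpan(\cC))$. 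That underlying $(\infty,1)$-category is $\CAlg(\coSpan_1(\cC))$: since $\CAlg=\Funo(\Env(\EE_\infty),-)$ with $\Env(\EE_\infty)$ a $1$-category, the underlying $(\infty,1)$-category of $\CAlg(\cU)$ is $\CAlg$ of the underlying $(\infty,1)$-category of $\cU$, and the latter is by construction $\coSpan_1(\cC)$ for $\cU=\coSpan(\cC)$. Under this identification the functor becomes the right-way functor $\cC\to\CAlg(\coSpan_1(\cC))$ of \cref{p:right-way-map-coreflection}, corestricted to the rigid commutative algebras; so it remains to show that its essential image is precisely the class of rigid commutative algebras.

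For one inclusion: by \cref{p:right-way-map-coreflection} the essential image consists of those $A$ which are isomorphic, as commutative algebras, to the canonical commutative algebra structure on an object of $\cC$, and these are rigid by \cref{l:can-is-rig} (rigidity being invariant under isomorphism of algebras). For the reverse inclusion, let $A\in\CAlg(\coSpan(\cC))$ be rigid; then by definition the unit $\emptyset\to A$ is a left adjoint in $\coSpan(\cC)$, so by \cref{l:ladj_cospan} the wrong-way component of the unit cospan is invertible, whence $A$ lies in the essential image by the characterization of that image in \cref{p:right-way-map-coreflection}. (Note this incidentally shows that, for commutative algebras in $\coSpan(\cC)$, rigidity is equivalent to the unit alone being a left adjoint, the condition on the multiplication being automatic.)

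Combining the two inclusions, the right-way functor $\cC\to\CAlg(\coSpan_1(\cC))$ is fully faithful with essential image $\CAlgrig(\coSpan(\cC))$, so its corestriction $\cC\isoto\CAlgrig(\coSpan(\cC))$ is an equivalence, as claimed.

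I expect the main obstacle to be the bookkeeping of the first paragraph: making precise that the a priori $2$-categorically defined $\CAlgrig(\coSpan(\cC))$ can be computed entirely inside the $1$-categorical truncation $\coSpan_1(\cC)$, which is exactly what makes \cref{p:right-way-map-coreflection} applicable. This rests on $\CAlgrig(\coSpan(\cC))$ being a $1$-category (so the relevant mapping $(\infty,1)$-categories are already groupoids, by \cref{c:com-rig-is-1-category}) together with the compatibility of $\CAlg$ with passage to underlying $(\infty,1)$-categories. Once this reduction is in place, the remainder of the argument is merely a concatenation of \cref{l:can-is-rig}, \cref{l:ladj_cospan}, and \cref{p:right-way-map-coreflection}.
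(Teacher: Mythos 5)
Your proposal is correct and follows essentially the same route as the paper: factor through $\CAlgrig$ via \cref{l:can-is-rig}, use \cref{c:com-rig-is-1-category} to view $\CAlgrig(\coSpan(\cC))$ inside the 1-truncation $\CAlg(\coSpan_1(\cC))$, get full faithfulness from \cref{p:right-way-map-coreflection}, and deduce essential surjectivity from the characterization of the essential image there together with \cref{l:ladj_cospan} applied to the unit of a rigid algebra. Your extra care in justifying that $\CAlg$ commutes with passage to the underlying 1-category (which the paper leaves implicit) and the incidental observation that rigidity of a commutative algebra in $\coSpan(\cC)$ is detected by the unit alone are both fine additions, not deviations.
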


\begin{proof}
  The right-way functor $\cC\to \CAlg(\coSpan(\cC))$ factors through the full subcategory of rigid commutative algebras by \cref{l:can-is-rig}.
  We want to show that $\cC\to \CAlgrig(\coSpan(\cC))$ is fully faithful and essentially surjective. 

  By \cref{c:rig-is-1-category}, $\CAlgrig(\coSpan(\cC))$ is a 1-category, so it is a full subcategory of the 1-core $\CAlgrig(\coSpan(\cC))\subseteq \CAlg(\coSpan_1(\cC))$.
  It then follows from \cref{l:1-cat-right-way-fully-faithful} that $\cC\to \CAlgrig(\coSpan(\cC))$ is fully faithful.  
  To see that it is essentially surjective, we need to show that every rigid commutative algebra comes from $\cC$ via the right-way functor. 
  The essential image of the right-way functor was described in \cref{l:1-cat-right-way-fully-faithful} as those commutative algebras whose unit is a right-way map.
  The unit of a rigid commutative algebra is indeed a right-way map, as it is a left adjoint (\cref{l:ladj_cospan}).
\end{proof}

\section{The adjunction of cospans and rigid commutative algebras}\label{s:the-adjunction}

Our goal in this section is to establish a new universal property for cospans, formulated using rigid commutative algebras.
This will take the form of an adjunction $\coSpan\dashv \CAlgrig$.
The unit of this adjunction will be the isomorphism $\cC\isoto \CAlgrig(\coSpan(\cC))$ of \cref{p:can-equiv-rig}, and the counit $\coSpan(\CAlgrig(\cU))\to \cU$ will come from the forgetful functor $\CAlgrig(\cU)\to \cU$ via the standard universal property of cospans.

We cannot yet phrase this adjunction formally, because the source and target of the two constructions are incompatible.
The functor
$$\coSpan\colon \Cat_1^\rex\to \Catsm_2$$
has as source 1-categories with finite colimits and right exact functors, while the functor
$$\CAlgrig\colon \Catsm_2\to \Cat_1$$
lands in 1-categories (\cref{c:com-rig-is-1-category}) not necessarily with finite colimits.
Moreover, even if it happens that $\CAlgrig(\cU)$ has finite colimits for some $\cU\in \Catsm_2$, it is not clear that the forgetful functor $\CAlgrig(\cU)\to \cU$ has cobase change in order to apply the universal property.

To resolve this mismatch, we will show that if $\CAlgrig(\cU)$ has pushouts which are given by relative tensor products, then $\CAlgrig(\cU)\to \cU$ has cobase change.
This condition is satisfied by $\coSpan(\cC)$ for every $\cC\in \Cat_1^\rex$, so we can restrict to the locally full subcategory $\Catrigbar\subseteq\Catsm_2$ where this condition is satisfied and prove the adjunction there:
$$\coSpan\colon \Cat_1^{\rex}\rightleftarrows\Catrigbar\colon \CAlgrig.$$

\subsection{Relative tensor products}

Let $\cU$ be a 2-category, $I$ a 1-category, and $D\colon I\to \cU$ a functor. 
A \emph{1-colimit} of $D$ is a colimit in the 1-core $D\colon I\to \cU^{\simeq_1}$.
A \emph{2-colimit} of $D$ is an enriched colimit as in \cite{hinich2023enrichedcolimits}.
Explicitly, a cocone $X\in \cU_{D/}$ is a 2-colimit if for every $Y\in \cU$ there is an equivalence of categories
$$\hom_\cU(X,Y)\isoto \lim_{i\in I^\op}\hom_\cU(D(i),Y).$$

\begin{remark}
  A 2-colimit, if it exists, is also a 1-colimit, as the core functor commutes with limits. 
  The converse need not hold: $\pt\in \mathrm{B}^2\NN$ is 1-initial but not 2-initial.
\end{remark}

For the remainder of this subsection let $\cU$ be a symmetric monoidal 2-category and fix a commutative algebra $A\in \CAlg(\cU)$.

\begin{definition}[{\cite[Construction 4.4.2.7]{ha}}]\label{d:bar-construction}
  For $M,N\in \Mod_A(\cU)$, the \emph{bar complex} of $M$ and $N$ over $A$ is the simplicial diagram in $\cU$
  \[\Br_A(M,N):=\begin{tikzcd}
    {\cdots M\otimes A \otimes A\otimes N} & {M\otimes A\otimes N} & {M\otimes N}
    \arrow[shift right=4, from=1-1, to=1-2]
    \arrow[from=1-1, to=1-2]
    \arrow[shift left=4, from=1-1, to=1-2]
    \arrow[shift right=2, from=1-2, to=1-1]
    \arrow[shift left=2, from=1-2, to=1-1]
    \arrow[shift left=2, from=1-2, to=1-3]
    \arrow[shift right=2, from=1-2, to=1-3]
    \arrow[from=1-3, to=1-2]
  \end{tikzcd}\]
  where the face maps are given either by the multiplication of $A$ or by the action of $A$ on $M$ and $N$, and the degeneracy maps are given by the unit of $A$.
  The \emph{relative tensor product} $M\otimes_A N$ is defined as the 2-colimit of this bar complex, when it exists.
\end{definition}

\begin{lemma}\label{l:adjunction-bar-complexes}
  Let $M_0,M_1,N\in \Mod_A(\cU)$, and suppose that $f\colon M_0\to M_1$ is a map of $A$-modules which has a right adjoint in $\Mod_A(\cU)$. 
  Then the induced map on bar complexes
  $$\Br_A(f,N)\colon \Br_A(M_0,N)\to \Br_A(M_1,N)$$
  has a right adjoint in $\Fun(\mathbf{\Delta}^\op,\cU)$.
\end{lemma}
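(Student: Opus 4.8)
The plan is to argue levelwise. The simplicial object $\Br_A(M_i,N)$ has $n$-simplices $M_i\otimes A^{\otimes n}\otimes N$, and the map $\Br_A(f,N)$ is given in degree $n$ by $f\otimes \id_{A^{\otimes n}}\otimes \id_N$. Since $f$ has a right adjoint $f^R$ in $\Mod_A(\cU)$, and since $\id_{A^{\otimes n}}$ and $\id_N$ are their own right adjoints, the morphism $f\otimes \id_{A^{\otimes n}}\otimes \id_N$ has a right adjoint in $\cU$, namely $f^R\otimes \id_{A^{\otimes n}}\otimes \id_N$; here I use that the tensor product of morphisms admitting right adjoints admits a right adjoint in a symmetric monoidal $2$-category (the argument already used in the proof of \cref{p:projection-formula}). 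So $\Br_A(f,N)$ is objectwise a left adjoint in $\cU$.

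To upgrade this to a right adjoint of $\Br_A(f,N)$ as a morphism in $\Fun(\Delta^\op,\cU)=\Fun(\Delta^\op,\cU)$, I would invoke \cref{c:adj-nat}: a strong natural transformation between functors $\Delta^\op\to \cU$ has a right adjoint if and only if it is objectwise a left adjoint and for every morphism $[m]\to[n]$ in $\Delta^\op$ the corresponding Beck--Chevalley $2$-morphism is invertible. The first condition was just verified. For the second, it suffices to treat the generating morphisms of $\Delta$, i.e. the face and degeneracy maps of the bar complex. For each such structure map, the naturality square for $\Br_A(f,N)$ has horizontal maps of the form $f\otimes\id$ and vertical maps built from the multiplication of $A$, the $A$-actions on $M_i$ and $N$, or the unit of $A$, tensored with identities. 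In each case the square is obtained by tensoring $f\colon M_0\to M_1$ with a fixed morphism of $\cU$ (a structure map of $A$, $M$, $N$) and with identities; the Beck--Chevalley $2$-morphism of such a square is the Beck--Chevalley $2$-morphism of $f$ against that structure map, tensored with identities. But the relevant structure maps all come from morphisms of $A$-modules: the face maps use the $A$-action and the multiplication of $A$ (which are $A$-linear), and the fact that $f$ is a map of $A$-modules with $A$-linear right adjoint $f^R$ means exactly that the mate square of $f$ against any $A$-module map commutes strictly, so the Beck--Chevalley $2$-morphism is invertible (indeed an identity). Degeneracy maps give degenerate squares with an identity edge, for which Beck--Chevalley is automatically invertible.

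The main obstacle I anticipate is being precise about the claim that $f$ being a morphism in $\Mod_A(\cU)$ with right adjoint \emph{in} $\Mod_A(\cU)$ forces the Beck--Chevalley $2$-cells against the bar-complex structure maps to vanish. The cleanest way to package this is conceptual rather than diagram-by-diagram: the bar complex is functorial in the module variable, i.e. $\Br_A(-,N)\colon \Mod_A(\cU)\to \Fun(\Delta^\op,\cU)$ is a functor of $2$-categories, so it sends the adjunction $f\dashv f^R$ in $\Mod_A(\cU)$ to an adjunction $\Br_A(f,N)\dashv \Br_A(f^R,N)$ in $\Fun(\Delta^\op,\cU)$. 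This gives the right adjoint directly, with unit and counit obtained by applying $\Br_A(-,N)$ to those of $f\dashv f^R$, and no Beck--Chevalley verification is needed. I would therefore structure the proof around the functoriality of $\Br_A(-,N)$ in its first argument, noting only afterwards (if desired) that the right adjoint is computed levelwise as $f^R\otimes\id_{A^{\otimes\bullet}}\otimes\id_N$, which is where \cref{p:projection-formula}-style reasoning enters.
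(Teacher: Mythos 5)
Your first argument --- levelwise right adjoints for $f\otimes\id_{A^{\otimes n}}\otimes\id_N$, reduction via \cref{c:adj-nat} to invertibility of the Beck--Chevalley cells, and the observation that the only non-degenerate case is the face map given by the $A$-action on the $M$-factor --- is essentially the paper's proof, and it is correct. The one phrase to sharpen is ``the mate square of $f$ against any $A$-module map commutes strictly'': what is actually used is that the Beck--Chevalley cell of the action square \emph{is} the lax $A$-linear structure cell on $f^R$ supplied by \cref{p:projection-formula}, and invertibility of that cell is exactly the hypothesis that the adjunction $f\dashv f^R$ lives in $\Mod_A(\cU)$ (the projection formula), not a general statement about mates against module maps.

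The restructuring you say you would prefer, however, has a gap: you assert that $\Br_A(-,N)\colon\Mod_A(\cU)\to\Fun(\Delta^\op,\cU)$ is a functor of $2$-categories and then conclude by ``$2$-functors preserve adjunctions.'' The preservation principle is fine (the paper uses it right after this lemma for the colimit functor), but the $2$-functoriality itself is not available off the shelf: \cref{d:bar-construction} defines the bar complex via \cite[Construction 4.4.2.7]{ha}, which gives functoriality only at the $(\infty,1)$-level, whereas $\Mod_A(\cU)$ has non-invertible $2$-morphisms and it is precisely their image (the unit and counit of $f\dashv f^R$) that your argument needs. Upgrading to a genuine $2$-functor can be done --- for instance by exhibiting $\Br_A(M,N)$ as the restriction of the symmetric monoidal functor $\Env(\cO)\to\cU$ encoding $(A,M,N)$ along a cosimplicial object of $\Env(\cO)$, so that the bar construction becomes restriction along a fixed functor, or by a naturality argument through \cref{c:modules-in-lax-arrow-category} --- but this is additional work of at least the same order as the Beck--Chevalley check it is meant to avoid, and neither the paper nor your proposal carries it out. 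So either keep your levelwise argument as the actual proof, or first prove the $2$-functoriality of the bar construction before leaning on it.
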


\begin{proof}
  Let $\alpha\colon [n]\to [m]\in \mathbf{\Delta}$, and consider the naturality square induced from $\alpha$:
  \[\begin{tikzcd}
    {M_0\otimes A^{\otimes m}\otimes N} & {M_1\otimes A^{\otimes m}\otimes N} \\
    {M_0\otimes A^{\otimes n}\otimes N} & {M_1\otimes A^{\otimes n}\otimes N}.
    \arrow["f", from=1-1, to=1-2]
    \arrow[from=1-1, to=2-1]
    \arrow[from=1-2, to=2-2]
    \arrow["f"', from=2-1, to=2-2]
  \end{tikzcd}\]
  By \cref{c:adj-nat}, it is enough to show that the corresponding Beck-Chevalley 2-morphism 
  \[\begin{tikzcd}
    {M_1\otimes A^{\otimes m}\otimes N} & {M_0\otimes A^{\otimes m}\otimes N} \\
    {M_1\otimes A^{\otimes n}\otimes N} & {M_0\otimes A^{\otimes n}\otimes N}
    \arrow["{f^R}", from=1-1, to=1-2]
    \arrow[from=1-1, to=2-1]
    \arrow[shorten <=14pt, shorten >=14pt, Rightarrow, from=1-2, to=2-1]
    \arrow[from=1-2, to=2-2]
    \arrow["{f^R}"', from=2-1, to=2-2]
  \end{tikzcd}\]
  is invertible.
  Every morphism in $\mathbf{\Delta}$ is a composition of the standard faces and degeneracies, so it is enough to check those.
  Of them, the only non-trivial case is the face map $\delta_0\colon [n]\to [n+1]$, in which case the above lax commuting square is induced from the lax linear map of $A$-modules structure which $f^R$ has by \cref{p:projection-formula}:
  \[\begin{tikzcd}
    {M_1\otimes A} & {M_0\otimes A} \\
    {M_1} & {M_0.}
    \arrow["f", from=1-1, to=1-2]
    \arrow[from=1-1, to=2-1]
    \arrow[shorten <=10pt, shorten >=10pt, Rightarrow, from=1-2, to=2-1]
    \arrow[from=1-2, to=2-2]
    \arrow["f"', from=2-1, to=2-2]
  \end{tikzcd}\]
  The fact that the adjunction $f\dashv f^R$ is in $\Mod_A(\cU)$ implies that the above square commutes strongly.
\end{proof}

\begin{corollary}\label{c:relative-tensor-left-adjoint}
  In the setting of \cref{l:adjunction-bar-complexes}, assume further that the relative tensor products $M_0\otimes_A N$ and $M_1\otimes_A N$ exist.
  Then 
  $$f\otimes_A N\colon M_0\otimes_A N\to M_1\otimes_A N$$
  has a right adjoint in $\cU$.
\end{corollary}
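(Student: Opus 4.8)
The plan is to obtain the right adjoint of $f\otimes_A N$ by applying the colimit functor to the adjunction of simplicial objects furnished by \cref{l:adjunction-bar-complexes}. Recall from \cref{d:bar-construction} that $M_0\otimes_A N$ and $M_1\otimes_A N$ are by definition the colimits of the bar complexes $\Br_A(M_0,N)$ and $\Br_A(M_1,N)$, and that $f\otimes_A N$ is the morphism induced on these colimits by $\Br_A(f,N)\colon \Br_A(M_0,N)\to \Br_A(M_1,N)$.

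First I would set up the formal mechanism. Let $\cW\subseteq \Fun(\Delta^\op,\cU)$ be the full sub-$2$-category spanned by the simplicial objects that admit a colimit in $\cU$. Since $\Delta^\op$ is weakly contractible, the constant-diagram functor factors as $\mathrm{const}\colon \cU\to\cW$, and the colimit is the partially defined left adjoint of $\mathrm{const}$, restricting to a functor of $(\infty,2)$-categories $\colim\colon\cW\to\cU$ characterised by $\hom_\cU(\colim X,Y)\simeq \hom_{\Fun(\Delta^\op,\cU)}(X,\mathrm{const}\,Y)$. Being a functor of $(\infty,2)$-categories, $\colim$ carries adjunctions to adjunctions.

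By \cref{l:adjunction-bar-complexes}, $\Br_A(f,N)$ admits a right adjoint $g$ in $\Fun(\Delta^\op,\cU)$. By hypothesis $\Br_A(M_0,N)$ and $\Br_A(M_1,N)$ lie in $\cW$; since $\cW$ is a \emph{full} sub-$2$-category, the entire adjunction datum — the morphism $g$ together with the unit and counit $2$-morphisms — lies in $\cW$ as well. Applying $\colim$ then produces an adjunction in $\cU$ whose left adjoint is $\colim\Br_A(f,N)=f\otimes_A N$; hence $f\otimes_A N$ has a right adjoint, namely $\colim g$.

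The argument is short precisely because the real work sits in \cref{l:adjunction-bar-complexes}; the one point that needs care is the bookkeeping that $\colim\colon\cW\to\cU$ is genuinely a functor of $(\infty,2)$-categories (so that it transports not merely the morphism $g$ but also the zigzag $2$-cells) and that the morphism it assigns to $\Br_A(f,N)$ is the map $f\otimes_A N$ of the statement — both standard. One may moreover trace through the proof of \cref{l:adjunction-bar-complexes} to see that $g\simeq\Br_A(f^R,N)$, where $f^R$ is the right adjoint of $f$ in $\Mod_A(\cU)$, so that the right adjoint of $f\otimes_A N$ is canonically $f^R\otimes_A N$; we will not need this refinement.
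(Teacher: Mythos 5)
Your proof is correct and follows essentially the same route as the paper: the paper likewise observes that the (partially defined) colimit functor $\Fun(\Delta^\op,\cU)\to\cU$ is a functor of $2$-categories, hence preserves adjunctions, and applies \cref{l:adjunction-bar-complexes}. Your write-up merely spells out the bookkeeping around the partially defined colimit functor in more detail.
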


\begin{proof}
  The 2-colimit is a (partially defined) 2-functor, so it preserves adjunctions. 
  The result then follows from \cref{l:adjunction-bar-complexes}.
\end{proof}

\begin{lemma}\label{l:beck-chevalley-bar-compatible}
  Let $f\colon M_0\to M_1$ and $g\colon N_0\to N_1$ be $A$-module maps such that the relative tensor products $M_i\otimes_A N_j$ exist.
  Assume that $f$ has a right adjoint in $\Mod_A(\cU)$, then for the commuting square in $\cU$
  \[\begin{tikzcd}
    {M_0\otimes_A N_0} & {M_1\otimes_A N_0} \\
    {M_0\otimes_A N_1} & {M_1\otimes_A N_1.}
    \arrow["{f\otimes_A N_0}", from=1-1, to=1-2]
    \arrow["{M_0\otimes_A g}"', from=1-1, to=2-1]
    \arrow["{M_1\otimes_A g}", from=1-2, to=2-2]
    \arrow["{f\otimes_A N_1}"', from=2-1, to=2-2]
  \end{tikzcd}\]
  the corresponding Beck-Chevalley 2-morphism  is an isomorphism.
\end{lemma}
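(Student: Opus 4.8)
The plan is to deduce this Beck-Chevalley identity from \cref{l:adjunction-bar-complexes} by transporting it along the colimit functor. Recall from \cref{d:bar-construction} that $M_i\otimes_A N_j=\colim\Br_A(M_i,N_j)$, so the square in the statement is precisely the image, under the (partially defined) colimit functor $\colim\colon\Fun(\Delta^\op,\cU)\to\cU$, of the commuting square of bar complexes with vertices $\Br_A(M_i,N_j)$, horizontal maps $\Br_A(f,N_j)$, and vertical maps $\Br_A(M_i,g)$. By \cref{l:adjunction-bar-complexes} the horizontal maps of this bar-complex square have right adjoints in $\Fun(\Delta^\op,\cU)$, and -- exactly as in the corollary following \cref{l:adjunction-bar-complexes} -- applying the functor of 2-categories $\colim$ furnishes right adjoints for $f\otimes_A N_0$ and $f\otimes_A N_1$ in $\cU$. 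Since a functor of 2-categories preserves the formation of mates, the Beck-Chevalley 2-morphism of the square in the statement is the image under $\colim$ of the Beck-Chevalley 2-morphism of the bar-complex square; it therefore suffices to show that the latter is invertible.

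A 2-morphism in the functor 2-category $\Fun(\Delta^\op,\cU)$ is invertible exactly when it is invertible in every simplicial degree, so I would verify this degreewise. By the proof of \cref{l:adjunction-bar-complexes}, the right adjoint of $\Br_A(f,N_j)$ is computed in degree $[n]$ by $f^R\otimes A^{\otimes n}\otimes N_j$; this is the one place the hypothesis $f\dashv f^R$ in $\Mod_A(\cU)$ is used, since it is what forces these degreewise adjoints to assemble into a strong natural transformation. The square under consideration in degree $[n]$ is then the image of the square in $\cU\times\cU$ with horizontal legs $(f,\id)$ and vertical legs $(\id,g)$ under the functor of 2-categories $(-)\otimes A^{\otimes n}\otimes(-)\colon\cU\times\cU\to\cU$. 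In $\cU\times\cU$ this square strictly commutes, the horizontal maps have the evident right adjoints $(f^R,\id)$, and the resulting Beck-Chevalley 2-morphism is, by a triangle identity, the identity; applying the 2-functor and using that 2-functors carry mates to mates, we conclude that the degreewise Beck-Chevalley 2-morphism in $\cU$ is invertible.

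The whole argument is formal, and the only step that deserves care is the passage through the partially defined functor $\colim$: one must know that on the full sub-2-category of simplicial objects admitting a colimit, $\colim$ is a genuine functor of 2-categories, hence preserves adjunctions and their mates. This is the same principle already invoked to construct the right adjoint of $f\otimes_A N$ in the corollary following \cref{l:adjunction-bar-complexes}, and granting it, the degreewise computation -- where the adjunction data of $f$ and the morphism $g$ live on disjoint tensor factors -- closes the proof.
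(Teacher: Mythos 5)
Your proposal is correct and follows essentially the same route as the paper: pass to the square of bar complexes, use that the (partially defined) colimit 2-functor preserves adjunctions and their mates, check invertibility of the Beck--Chevalley 2-morphism levelwise, and observe that in each simplicial degree the adjunction data of $f$ and the map $g$ live on disjoint tensor factors, so the mate is invertible. You merely spell out a few steps the paper leaves implicit (mates under 2-functors, where the hypothesis that $f\dashv f^R$ holds in $\Mod_A(\cU)$ enters via \cref{l:adjunction-bar-complexes}), which is fine.
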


\begin{proof}
  The above square is the 2-colimit of the square
  \[\begin{tikzcd}[column sep=huge]
    {\Br_A(M_0,N_0)} & {\Br_A(M_1,N_0)} \\
    {\Br_A(M_0,N_1)} & {\Br_A(M_1,N_1)}
    \arrow["{\Br_A(f,N_0)}", from=1-1, to=1-2]
    \arrow[from=1-1, to=2-1]
    \arrow[from=1-2, to=2-2]
    \arrow["{\Br_A(f,N_1)}"', from=2-1, to=2-2]
  \end{tikzcd}\quad\in\quad  \Fun(\mathbf{\Delta}^\op,\cU),\]
  so it is enough to show that the Beck-Chevalley 2-morphism is an isomorphism here. Note that, as $f$ has a right adjoint in $\Mod_A(\cU)$, \cref{l:adjunction-bar-complexes} implies that $\Br_A(f,N_j)$ has a right adjoint.
  To check that the Beck-Chevalley 2-morphism is invertible in $\Fun(\mathbf{\Delta}^\op,\cU)$, it is enough to check level-wise
  \[\begin{tikzcd}
    {M_0\otimes A^{\otimes n}\otimes N_0} & {M_1\otimes A^{\otimes n}\otimes N_0} \\
    {M_0\otimes A^{\otimes n}\otimes N_1} & {M_1\otimes A^{\otimes n}\otimes N_1}.
    \arrow[from=1-1, to=1-2]
    \arrow[from=1-1, to=2-1]
    \arrow[from=1-2, to=2-2]
    \arrow[from=2-1, to=2-2]
  \end{tikzcd}\]
  This square decomposes as a tensor product of independent squares, so it suffices to consider the component involving $f$:
  \[\begin{tikzcd}
    {M_0} & {M_1} \\
    {M_0} & {M_2}.
    \arrow["f", from=1-1, to=1-2]
    \arrow[equals, from=1-1, to=2-1]
    \arrow[equals, from=1-2, to=2-2]
    \arrow["f", from=2-1, to=2-2]
  \end{tikzcd}\] 
  Since this square is degenerate, the corresponding Beck-Chevalley 2-morphism is invertible.
\end{proof}

Consider $B,C\in \CAlg(\cU)_{A/}$.
If $\cU$ has all geometric realizations and the tensor product commutes with them in each variable, then the relative tensor product $B\otimes_A C$ has the structure of a commutative algebra which is the pushout of $B$ and $C$ over $A$ in $\CAlg(\cU)$.
However, the assumption that all geometric realizations exist is too restrictive for us.
Importantly, the category of cospans does not necessarily have all geometric realizations.
This leads us to consider a relaxed condition, where the relative tensor product exists only for some specific modules.
In the language of \cite{dancohen2024relative}, these modules are called \emph{bar compatible}.

\begin{definition}\label{d:cfbar}
  Let $M,N\in \Mod_A(\cU)$. We say that $M$ and $N$ are \emph{bar compatible} over $A$ if for every $X,Y\in \cU$ the following conditions hold:
  \begin{enumerate}
    \item The relative tensor product $(X\otimes M)\otimes_A(N\otimes Y)$ exists.
    \item The map 
    $$(X\otimes M)\otimes_A(N\otimes Y)\to X\otimes (M\otimes_A N)\otimes Y$$
    given by the assembly map
    $$|\Br_A(X\otimes M,N\otimes Y)|
    \to X\otimes|\Br_A(M,N)|\otimes Y$$
    is an isomorphism.
  \end{enumerate}
\end{definition}

\begin{lemma} \label{p:bar-construction-is-pushout}
  Let $B,C\in \CAlg(\cU)_{A/}$ such that $B$ and $C$ are bar compatible over $A$.
  Then $B$ and $C$ have a 1-pushout over $A$ in $\CAlg(\cU)$, whose underlying object in $\cU$ is $B\otimes_A C$.
\end{lemma}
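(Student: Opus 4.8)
The plan is to realize $B\otimes_A C$ as the geometric realization of a simplicial commutative algebra and to use bar compatibility to show that this realization is created by the forgetful functor $\CAlg(\cU)\to\cU$, so that $B\otimes_A C$ inherits an algebra structure solving the pushout problem; this mirrors the classical identification of relative tensor products of commutative algebras with pushouts, as in \cite[\S\S 3.2.3--3.2.4, 4.4--4.5]{ha}. \textbf{Step 1 (upgrading the bar complex).} First I would observe that the bar complex $\Br_A(B,C)$ of \cref{d:bar-construction} lifts to a simplicial object of $\CAlg(\cU)$: each term $\Br_A(B,C)_n = B\otimes A^{\otimes n}\otimes C$ is a commutative algebra as a tensor product of the commutative algebras $B,A,C$, and since $A,B,C$ are commutative the face maps (the multiplication of $A$ and its actions on $B$ and $C$) and the degeneracy maps (the unit of $A$) are all maps of commutative algebras. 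This yields $\Br_A(B,C)\colon\Delta^\op\to\CAlg(\cU)$ whose underlying $\cU$-diagram is the original bar complex; its colimit in $\cU$ exists and equals $B\otimes_A C$ by condition (1) of bar compatibility (taking $X=Y=\one$), so in particular $B\otimes_A C$ is an object of $\cU$.

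\textbf{Step 2 (creating the colimit in $\CAlg(\cU)$).} The heart of the proof is to promote $B\otimes_A C$ to the colimit of $\Br_A(B,C)$ in $\CAlg(\cU)$. Here one runs the standard argument that the forgetful functor from commutative algebras creates sifted colimits \cite[Proposition 3.2.3.1]{ha}: the multiplication on $B\otimes_A C$ is built from the levelwise multiplications on $\Br_A(B,C)_n$ via
$$(B\otimes_A C)\otimes(B\otimes_A C)\ \simeq\ \colim_{([m],[n])\in\Delta^\op\times\Delta^\op}\bigl(\Br_A(B,C)_m\otimes\Br_A(B,C)_n\bigr)\ \simeq\ \colim_{[n]\in\Delta^\op}\bigl(\Br_A(B,C)_n\otimes\Br_A(B,C)_n\bigr),$$
where the first isomorphism uses that $X\otimes(-)\otimes Y$ preserves the realization $|\Br_A(B,C)_\bullet|$ for all $X,Y\in\cU$ — which is exactly condition (2) of bar compatibility applied with $M=B$, $N=C$, once one observes $\Br_A(X\otimes B,\,C\otimes Y)_\bullet\simeq X\otimes\Br_A(B,C)_\bullet\otimes Y$ — and the second uses that $\Delta^\op$ is sifted, so the diagonal $\Delta^\op\to\Delta^\op\times\Delta^\op$ is cofinal. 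The same two inputs feed the unit, associativity, commutativity and all higher coherences, so the proof of \cite[Proposition 3.2.3.1]{ha} goes through (and adapts to the $2$-categorical setting, the colimits involved being $(\infty,1)$-colimits). I expect this to be the main obstacle: one must check that the proof of creation of sifted colimits is genuinely local, invoking only the existence and $\otimes$-compatibility of $|\Br_A(B,C)_\bullet|$ and of the realizations obtained from it by tensoring with objects of $\cU$, which is precisely what bar compatibility supplies. Arguing instead through the symmetric monoidality of $\Mod_A(\cU)$ is not available, since that category need not be symmetric monoidal here (the relevant relative tensor products need not all exist).

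\textbf{Step 3 (the pushout property).} Finally, granting $B\otimes_A C\simeq\colim_{\CAlg(\cU)}\Br_A(B,C)$, I would conclude using that the tensor product computes the coproduct in $\CAlg(\cU)$ \cite[Proposition 3.2.4.7]{ha} together with the fact that $\Br_A(B,C)$ is the two-sided bar resolution computing $B\sqcup_A C$: a cocone $\Br_A(B,C)_\bullet\to D$ in $\CAlg(\cU)$ unwinds — using tensor product $=$ coproduct at each level — to a pair of algebra maps $B\to D$, $C\to D$ together with a path identifying their restrictions along the structure maps $A\to B$ and $A\to C$, with the higher simplices contributing no further data. Hence
$$\Map_{\CAlg(\cU)}(B\otimes_A C,\,D)\ \simeq\ \Map_{\CAlg(\cU)}(B,D)\times_{\Map_{\CAlg(\cU)}(A,D)}\Map_{\CAlg(\cU)}(C,D),$$
exhibiting $B\otimes_A C$ as the pushout of $B\leftarrow A\to C$ in $\CAlg(\cU)$, with underlying object $B\otimes_A C\in\cU$ as claimed.
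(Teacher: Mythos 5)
Your proposal is correct and takes essentially the same route as the paper: it identifies the pushout in $\CAlg(\cU)$ with the colimit of the bar complex using that coproducts of commutative algebras are tensor products \cite[Proposition 3.2.4.7]{ha}, and then argues that the forgetful functor $\CAlg(\cU)\to\cU$ creates this colimit by observing that the proof of \cite[Proposition 3.2.3.1, Corollary 3.2.3.2]{ha} only needs the tensor-compatibility of the specific realizations guaranteed by bar compatibility (the paper cites \cite[Corollary 3.2.3.2]{ha} directly, remarks that its blanket hypotheses can be weakened exactly as you do, and also offers an alternative argument via a conservative symmetric monoidal cocompletion of $\cU$). The only informal point is your Step 3 claim that higher simplices of a cocone ``contribute no further data,'' which is not literally how one argues $\infty$-categorically; the paper instead simply invokes the standard fact that pushouts are computed by coproducts together with the geometric realization of the two-sided bar complex, which is the cleaner way to finish.
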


\begin{proof}
  As we only require a 1-pushout, we may pass to the 1-core and assume that $\cU$ is a 1-category.
  Generally, pushouts can be calculated by coproducts and geometric realizations.
  In $\CAlg(\cU)$, coproducts are given by tensor products \cite[Proposition 3.2.4.7]{ha}, hence the pushout is given by the colimit of $\Br_A(B,C)$ in $\CAlg(\cU)$.
  This colimit exists, and is preserved by the forgetful functor $\CAlg(\cU)\to \cU$, by \cite[Corollary 3.2.3.2]{ha}.

  Note that \cite[Corollary 3.2.3.2]{ha} assumes the existence of \emph{all} geometric realizations, and that they all commute with tensor products in each variable. 
  It can be checked that the proof only needs $B$ and $C$ to be bar compatible over $A$.
  Alternatively, we could move to a conservative cocompletion where we freely add all missing colimits, and apply the claim there.
  This conservative cocompletion is the localization of $\PSh(\cU)$ identifying the formal colimit of $\Br_A(X\otimes B,C\otimes Y)$ with $(X\otimes B)\otimes_A (C\otimes Y)$; the second condition in \cref{d:cfbar} implies that this localization is symmetric monoidal.
\end{proof}

\begin{remark}
  We expect the 1-pushout in \cref{p:bar-construction-is-pushout} to also always be a 2-pushout. 
\end{remark}

\subsection{Rigid bar compatibility}

We now consider symmetric monoidal 2-categories whose  rigid commutative algebras are bar compatible.

\begin{definition}\label{d:rigid-bar-compatible}
  A symmetric monoidal 2-category $\cU\in \Catsm_2$ is called \emph{rigid bar compatible} if for every $A\in \CAlgrig(\cU)$ and $B,C\in \CAlgrig(\cU)_{A/}$ the following holds:
  \begin{enumerate}
    \item\label{i:relative-tensor-of-rigid-exist} $B$ and $C$ are bar compatible over $A$, and
    \item\label{i:relative-tensor-of-rigid-is-rigid} $B\otimes_A C$ is rigid (where we consider $B\otimes_A C\in \CAlg(\cU)$ by \cref{p:bar-construction-is-pushout}).
  \end{enumerate}
  A symmetric monoidal functor $F:\cU\to \cV$ between rigid bar compatible $\cU$ and $\cV$ is itself called \emph{rigid bar compatible} if it commutes with the above relative tensor products 
  $$F(B)\otimes_{F(A)} F(C)\isoto F(B\otimes_A C).$$
  Denote by $\Catrigbar\subseteq \Catsm_2$ the locally full subcategory of rigid bar compatible categories and functors.
  Denote by $\Mapo^\rigbar$ the space of rigid bar compatible functors.
\end{definition}

\begin{remark}
  The author does not know of an example where condition (\ref{i:relative-tensor-of-rigid-exist}) in \cref{d:rigid-bar-compatible} holds but condition (\ref{i:relative-tensor-of-rigid-is-rigid}) does not.
\end{remark}

\begin{lemma}\label{l:rigbar-lands-in-rex}
  The functor $\CAlgrig\colon \Cat_2^\otimes\to \Cat_1$ restricts to $\CAlgrig\colon \Catrigbar\to \Cat_1^\rex$.
\end{lemma}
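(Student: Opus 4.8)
The plan is to reduce finite cocompleteness of $\CAlgrig(\cU)$ to the two ingredients already supplied by the earlier parts of the paper: the existence of an initial object and the construction of pushouts as relative tensor products. First I would recall that $\CAlgrig(\cU)$ is a genuine $1$-category by \cref{c:com-rig-is-1-category}, so that $\Cat_1^\rex$ is the appropriate target; that a $1$-category admits all finite colimits precisely when it has an initial object and all pushouts; and that a functor between such categories is right exact precisely when it preserves the initial object and pushouts. This splits the verification into an ``objects'' part (showing $\CAlgrig(\cU)\in\Cat_1^\rex$ for $\cU\in\Catrigbar$) and a ``morphisms'' part (showing $\CAlgrig(F)$ is right exact for $F$ rigid bar compatible), each of which further splits into an initial-object claim and a pushout claim.

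For the initial object, I would observe that the unit $\one\in\CAlg(\cU)$ is the initial commutative algebra and is rigid: its unit map is $\id_\one$, which is its own right adjoint, and its multiplication $\one\otimes\one\to\one$ is an isomorphism and hence left adjoint as a map of $\one$-bimodules. Since $\CAlgrig(\cU)\subseteq\CAlg(\cU)$ is a full subcategory containing $\one$, the object $\one$ remains initial in $\CAlgrig(\cU)$. For a symmetric monoidal functor $F$ one has $F(\one_\cU)\simeq\one_\cV$, so $\CAlgrig(F)$ preserves the initial object automatically.

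For pushouts I would take a span $B\leftarrow A\rightarrow C$ in $\CAlgrig(\cU)$, so that $A\in\CAlgrig(\cU)$ and $B,C\in\CAlgrig(\cU)_{A/}$. The first clause of rigid bar compatibility says $B$ and $C$ are bar compatible over $A$, so \cref{p:bar-construction-is-pushout} produces a pushout $B\otimes_A C$ of $B$ and $C$ over $A$ in $\CAlg(\cU)$; the second clause says $B\otimes_A C$ is again rigid. Since a colimit in an ambient category that lands in a full subcategory is a colimit there, $B\otimes_A C$ is the pushout in $\CAlgrig(\cU)$, which establishes $\CAlgrig(\cU)\in\Cat_1^\rex$. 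For a rigid bar compatible $F\colon\cU\to\cV$, applying $\CAlgrig(F)$ to such a span yields $F(B)\leftarrow F(A)\rightarrow F(C)$ with pushout $F(B)\otimes_{F(A)}F(C)$, and the defining property of a rigid bar compatible functor gives $F(B)\otimes_{F(A)}F(C)\isoto F(B\otimes_A C)$; hence $\CAlgrig(F)$ sends the pushout of the span to the pushout of its image, and is therefore right exact.

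Given \cref{p:bar-construction-is-pushout} and the definition of $\Catrigbar$, the argument is essentially formal, so I do not anticipate a genuine obstacle. The only point needing care is the bookkeeping: the pushouts one must construct are over an \emph{arbitrary} rigid commutative algebra $A$, not merely over $\one$, which is exactly why the condition defining $\Catrigbar$ is quantified over all $A\in\CAlgrig(\cU)$ and all $B,C\in\CAlgrig(\cU)_{A/}$, and why the fullness of the inclusion $\CAlgrig(\cU)\subseteq\CAlg(\cU)$ is needed to import the pushout (and the fact that the pushout object is rigid) from $\CAlg(\cU)$.
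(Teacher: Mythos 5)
Your proposal is correct and follows the same route as the paper: the initial object is the unit, pushouts exist in $\CAlgrig(\cU)$ via \cref{p:bar-construction-is-pushout} together with the rigidity clause of the $\Catrigbar$ definition, and preservation by a rigid bar compatible functor is built into its definition. The paper's proof is just a terser version of this; your added details (rigidity of $\one$, fullness to transport the pushout from $\CAlg(\cU)$) are exactly the implicit steps.
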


\begin{proof}
  If $\cU\in \Catrigbar$, then $\CAlgrig(\cU)$ has an initial object $\one$ and all pushouts by \cref{p:bar-construction-is-pushout}.
  A rigid bar compatible functor $\cU\to \cV$ induces a right exact functor $\CAlgrig(\cU)\to \CAlgrig(\cV)$ by definition. 
\end{proof}

Let $\cC\in \Cat_1^\rex$. 
To prove that $\coSpan(\cC)$ is rigid bar compatible, we will need the following lemma:

\begin{lemma}\label{l:right-way-contractible-colimits}
  The right-way functor $\cC\to \coSpan(\cC)$ sends weakly contractible colimits in $\cC$ to 2-colimits in $\coSpan(\cC)$.
\end{lemma}

\begin{proof}
  Let $D\colon I\to \cC$ have a colimit $X\in \cC$ where $I$ is weakly contractible.
  We need to show that for every $Y\in \coSpan(\cC)$, the map
  $$\hom_{\coSpan(\cC)}(X,Y)\to\lim_{i\in I^\op}\hom_{\coSpan(\cC)}(D(i),Y)$$
  is an equivalence of categories.
  The hom-category in $\coSpan(\cC)$ is given by the slice category 
  $$\hom_{\coSpan(\cC)}(X,Y)\simeq \cC_{X\sqcup Y/}\simeq \cC_{X/}\times_{\cC} \cC_{Y/},$$ 
  so we need to show that
  $$\cC_{X/}\times_{\cC} \cC_{Y/}\to 
  \lim_{i\in I^\op}(\cC_{D(i)/}\times_{\cC} \cC_{Y/})\simeq 
  \lim_{i\in I^\op}\cC_{D(i)/}\times_{\lim_{I^\op}\cC} \lim_{I^\op}\cC_{Y/}$$
  is an equivalence of categories.
  As $I$ is weakly contractible, we get equivalences for the constant limits $\cC\isoto\lim_{I^\op}\cC$ and $\cC_{Y/}\isoto\lim_{I^\op}\cC_{Y/}$.
  
  It remains to show that $\cC_{X/}\to \lim_{i\in I^\op} \cC_{D(i)/}$ is an equivalence of categories.
  For this, it suffices to show that the functor $\cC^\op\to \Cat_1$ given by $Z\mapsto \cC_{Z/}$ preserves weakly contractible limits.
  We factor this functor as the composition
  \begin{equation*}
    \cC^\op \hookrightarrow \Fun(\cC,\mathrm{Spc})  \isoto (\Cat_1)^{\mathrm{lfib}}_{/\cC}\to (\Cat_1)_{/\cC}\to \Cat_1
  \end{equation*}
  and verify that each factor preserves weakly contractible limits:
  \begin{enumerate}
    \item The Yoneda embedding, $Z\mapsto \Map_\cC(Z,-)$, preserves all limits.
    \item Unstraightening is an equivalence of categories, and sends $\Map_\cC(Z,-)$ to the left fibration $\cC_{Z/}\to \cC$.
    \item The functor forgetting that the map to $\cC$ is a left fibration preserves all limits by \cite[Theorem 3.1.5.1]{htt}\footnote{The cited result is for cocartesian fibrations, but this implies the result for left fibrations.}.
    \item The functor forgetting the map to $\cC$ preserves weakly contractible limits by \cite[Lemma 2.2.7]{gepner2021operadsasanalyticmonads}.
  \end{enumerate}
\end{proof}

\begin{lemma} \label{l:cospan-is-rigbar}
  The functor $\coSpan\colon \Cat_1^\rex\to \Cat_2^\otimes$ factors through $\Catrigbar$. 
\end{lemma}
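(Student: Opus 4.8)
The plan is to reduce everything to the elementary fact that in a cocartesian symmetric monoidal category the relative tensor product over an algebra is a pushout, via the identification $\cC\isoto\CAlgrig(\coSpan(\cC))$ of \cref{p:can-equiv-rig}.

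First I would fix $\cC\in\Cat_1^\rex$ and show $\coSpan(\cC)$ is rigid bar compatible. By \cref{p:can-equiv-rig} a rigid commutative algebra in $\coSpan(\cC)$ is the same as an object of $\cC$ equipped with its canonical (right-way) structure, so I would check the conditions of rigid bar compatibility for $A\in\cC$ and $B,C\in\cC_{A/}$. The key point is that for any $X,Y\in\coSpan(\cC)$ the bar complex $\Br_A(X\otimes B,C\otimes Y)$ is a simplicial diagram all of whose face and degeneracy maps are right-way morphisms: since the right-way functor is symmetric monoidal, the multiplication and unit of the canonical algebra $A$, the action maps of $B$ and $C$, and hence those of $X\otimes B$ and $C\otimes Y$, are all right-way, and every structure map of the bar complex is assembled from these. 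Therefore this diagram is the image under the right-way functor of the bar complex computed in $(\cC,\sqcup)$, whose colimit --- being the relative tensor product in a cocartesian monoidal category --- is the pushout $(X\sqcup B)\sqcup_A(C\sqcup Y)\simeq X\sqcup(B\sqcup_A C)\sqcup Y$, which exists because $\cC\in\Cat_1^\rex$. Since the right-way functor preserves colimits indexed by the weakly contractible category $\Delta^\op$ (\cref{c:cospan-connected-colimits}, or its evident $2$-categorical analogue), this colimit is computed in $\coSpan(\cC)$ as the right-way image of $X\sqcup(B\sqcup_A C)\sqcup Y$; this yields the existence of $(X\otimes B)\otimes_A(C\otimes Y)$ and, by naturality, identifies the assembly map of \cref{d:cfbar} with the right-way image of the canonical isomorphism $(X\sqcup B)\sqcup_A(C\sqcup Y)\simeq X\sqcup(B\sqcup_A C)\sqcup Y$. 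So $B$ and $C$ are bar compatible over $A$. Finally, by \cref{p:bar-construction-is-pushout}, $B\otimes_A C$ is the pushout of $B$ and $C$ over $A$ in $\CAlg(\coSpan(\cC))$; since the right-way functor $\cC\to\CAlg(\coSpan_1(\cC))$ is a left adjoint (\cref{p:right-way-map-coreflection}) it preserves this pushout, so $B\otimes_A C$ is the right-way image of $B\sqcup_A C\in\cC$ and is therefore rigid by \cref{l:can-is-rig}. Hence $\coSpan(\cC)$ is rigid bar compatible.

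For the functoriality, given a right exact $F\colon\cC\to\cD$, naturality of the right-way functor identifies $\coSpan(F)$ on canonical commutative algebras with $F$ itself, so that under the equivalences of \cref{p:can-equiv-rig} the comparison map $\coSpan(F)(B)\otimes_{\coSpan(F)(A)}\coSpan(F)(C)\to\coSpan(F)(B\otimes_A C)$ is the right-way image of the canonical map $F(B)\sqcup_{F(A)}F(C)\to F(B\sqcup_A C)$; this is an isomorphism because $F$ preserves pushouts. Hence $\coSpan(F)$ is rigid bar compatible, and $\coSpan$ factors through $\Catrigbar$.

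I expect the main obstacle to be the verification that the bar complexes $\Br_A(X\otimes B,C\otimes Y)$ genuinely lift to diagrams in $\cC$ --- one must unwind the induced module structures on the tensored modules and confirm that every face and degeneracy map is a right-way map --- together with the care needed to identify the colimit taken in the $2$-category $\coSpan(\cC)$ with the corresponding pushout in $\cC$, and to match the commutative algebra structure on $B\otimes_A C$ furnished by \cref{p:bar-construction-is-pushout} with the canonical one on the right-way image of the pushout in $\cC$; for the latter points one may, as in the proof of \cref{p:bar-construction-is-pushout}, reduce to the $1$-categorical truncation.
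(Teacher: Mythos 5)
Your proposal is correct and follows essentially the same route as the paper: lift the bar complex $\Br_A(X\otimes B,C\otimes Y)$ to $\cC$ via the right-way functor and \cref{p:can-equiv-rig}, identify its colimit with the pushout $X\sqcup(B\sqcup_A C)\sqcup Y$ in $\cC$, transport it back using preservation of contractible colimits (\cref{c:cospan-connected-colimits}), conclude rigidity of $B\otimes_A C$ from \cref{l:can-is-rig}, and deduce functoriality from right exactness of $F$. The only differences are cosmetic (e.g.\ you check the lift map-by-map as right-way morphisms and route the algebra-structure identification through \cref{p:bar-construction-is-pushout} and \cref{p:right-way-map-coreflection}, where the paper invokes the equivalence of \cref{p:can-equiv-rig} directly).
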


\begin{proof}
  Let $\cC\in \Cat_1^\rex$, we need to show that $\coSpan(\cC)$ is rigid bar compatible.
  Consider $A\in \CAlgrig(\coSpan(\cC))$ and $B,C\in \CAlgrig(\coSpan(\cC))_{A/}$.
  Consider also $X,Y\in \coSpan(\cC)$.
  The bar complex $\Br_A^{\coSpan(\cC)}(X\sqcup B,C\sqcup Y)$ comes from a diagram of commutative algebras where all entries are rigid, hence $\Br_A^{\coSpan(\cC)}(X\sqcup B,C\sqcup Y)$ is the image of $\Br_A^{\cC}(X\sqcup B,C\sqcup Y)$ under the right-way functor $\cC\to \coSpan(\cC)$ by \cref{p:can-equiv-rig}.

  Observe that $B$ and $C$ are bar compatible over $A$ in $\cC$:
  the colimit of $\Br_A^{\cC}(X\sqcup B,C\sqcup Y)$ exists, as it is the pushout $X\sqcup B\sqcup_A C\sqcup Y$, and there is an isomorphism of pushouts  
  $$(X\sqcup B)\sqcup_A (C\sqcup Y)\isoto X\sqcup (B\sqcup_A C)\sqcup Y.$$
  It follows that $B$ and $\cC$ are also bar compatible in $\coSpan(\cC)$, as the right-way functor $\cC\to \coSpan(\cC)$ is symmetric monoidal and sends simplicial colimits to 2-colimits (\cref{l:right-way-contractible-colimits}).
  Moreover, the commutative algebra structure on $B\sqcup_A C$ is induced from the right-way functor, so it is rigid by \cref{l:can-is-rig}.

  Finally, a right exact functor $F:\cC\to \cD$ commutes with pushouts, so $F\colon \coSpan(\cC)\to \coSpan(\cD)$ is rigid bar compatible. 
\end{proof}

To demonstrate that rigid bar compatibility is not an entirely ad-hoc notion, we give a more natural example of a 2-category which is rigid bar compatible.

\begin{example}\label{e:prl-is-rigid-bar-compatible}
  $\Prl_\cB$ is rigid bar compatible for every $\cB\in \CAlg(\Prl)$.
  Indeed, $\Prl_\cB$ has all geometric realizations and they commute with the tensor product in each variable, so all modules are bar compatible \cite[Example 4.4.2.11]{ha}.
  The relative tensor product of rigid commutative algebras in $\Prl_\cB$ is itself rigid by \cite[Proposition 4.53]{ramzi2024locallyrigidinftycategories} (also follows from \cite[Proposition C.6.4]{arinkin2022stacklocalsystemsrestricted}).
\end{example}

The last remaining ingredient is that, in the context of rigid bar compatible 2-categories, the forgetful functor satisfies cobase change.

\begin{lemma} \label{l:forgetful-cobase-change}
  Suppose $\cU\in \Catrigbar$, then the forgetful functor $U:\CAlgrig(\cU)\to \cU$ satisfies cobase change. 
\end{lemma}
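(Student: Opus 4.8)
My approach is to verify the two conditions in the definition of cobase change for $U\colon \CAlgrig(\cU)\to \cU$ directly; almost all of the real work is already contained in the previous results. The first condition is the quick one: given a morphism $f\colon A\to B$ in $\CAlgrig(\cU)$, it is a map of rigid commutative algebras, so \cref{c:com-rigid-map-adjoint} shows that $\trans{f}\colon B\to A$ is right adjoint to $f$ already in $\Mod_A(\cU)$. Since the forgetful functor $\Mod_A(\cU)\to \cU$ is a functor of $2$-categories, it preserves adjunctions, and hence $U(f)$ has a right adjoint in $\cU$.

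For the second condition I would first pin down what the pushout squares in $\CAlgrig(\cU)$ look like. Given $A\in \CAlgrig(\cU)$ and $B,C\in \CAlgrig(\cU)_{A/}$, rigid bar compatibility of $\cU$ says that $B$ and $C$ are bar compatible over $A$; by \cref{p:bar-construction-is-pushout} the pushout then exists in $\CAlg(\cU)$ with underlying object $B\otimes_A C$, and by the second clause of rigid bar compatibility this algebra is again rigid. Since $\CAlgrig(\cU)\subseteq \CAlg(\cU)$ is a full subcategory, $B\otimes_A C$ is also the pushout in $\CAlgrig(\cU)$. Consequently the image under $U$ of an arbitrary pushout square in $\CAlgrig(\cU)$ is exactly the square of relative tensor products appearing in \cref{l:beck-chevalley-bar-compatible} with $M_0=N_0=A$, $M_1=B$, $N_1=C$, with the top horizontal map the algebra map $f\colon A\to B$ and the left vertical map the algebra map $g\colon A\to C$. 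The Beck-Chevalley $2$-morphism required by the cobase change condition is the one obtained by taking right adjoints of the horizontal maps $f$ and $f\otimes_A C$, which is precisely the $2$-morphism controlled by \cref{l:beck-chevalley-bar-compatible}; its hypothesis, that $f$ has a right adjoint in $\Mod_A(\cU)$, is again supplied by \cref{c:com-rigid-map-adjoint}. So the $2$-morphism is invertible and the second condition holds.

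The only step needing genuine care, rather than being purely formal, is this last identification: one must check that applying $U$ to a pushout square in $\CAlgrig(\cU)$ really produces the relative-tensor-product square of \cref{l:beck-chevalley-bar-compatible}, with the maps whose right adjoints enter the cobase change condition matched up with the maps $f\otimes_A N_j$ whose right adjoints define that lemma's Beck-Chevalley $2$-morphism, and with the orientation of the square consistent. Once this bookkeeping is in place, there is nothing further to prove: the statement reduces entirely to \cref{c:com-rigid-map-adjoint}, \cref{p:bar-construction-is-pushout}, and \cref{l:beck-chevalley-bar-compatible}, with no additional analysis of the ambient $2$-categorical structure.
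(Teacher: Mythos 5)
Your proposal is correct and follows essentially the same route as the paper: identify pushouts in $\CAlgrig(\cU)$ with relative tensor products via \cref{p:bar-construction-is-pushout}, use \cref{c:com-rigid-map-adjoint} to get the adjoint in $\Mod_A(\cU)$, and conclude invertibility of the Beck--Chevalley 2-morphism from \cref{l:beck-chevalley-bar-compatible}. You are merely a bit more explicit than the paper about the first cobase-change condition and the identification of the pushout square with the square in \cref{l:beck-chevalley-bar-compatible}, which is fine.
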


\begin{proof}
  By \cref{p:bar-construction-is-pushout}, the category $\CAlgrig(\cU)$ has all pushouts, and they are given by
  \[\begin{tikzcd}
    A & B \\
    C & {B\otimes_A C.}
    \arrow[from=1-1, to=1-2]
    \arrow[from=1-1, to=2-1]
    \arrow[from=1-2, to=2-2]
    \arrow[from=2-1, to=2-2]
    \arrow["\lrcorner"{anchor=center, pos=0.125, rotate=180}, draw=none, from=2-2, to=1-1]
  \end{tikzcd}\]
  Moreover, \cref{c:com-rigid-map-adjoint} shows that $A\to B$ is left adjoint in $\Mod_A(\cU)$.
  Therefore, by \cref{l:beck-chevalley-bar-compatible}, the image of this square in $\cU$ has an invertible Beck-Chevalley 2-morphism.
\end{proof}

\subsection{The adjunction}
\Cref{l:cospan-is-rigbar,l:rigbar-lands-in-rex} give us functors with compatible source and target:
\begin{align*}
  \coSpan&\colon  \Cat_1^\rex\to \Catrigbar\\
  \CAlgrig&\colon  \Catrigbar\to \Cat_1^\rex.
\end{align*} 
We now prove that they are adjoint.

\begin{theorem} \label{t:cospan_cfrob_adj}
  There is a coreflective adjunction
  $$\coSpan\colon 
  \Cat_1^\rex\rightleftarrows\Catrigbar
  \colon \CAlgrig,$$
  meaning that for every $\cC\in \Cat_1^\rex$ and $\cU\in \Catrigbar$ there is a natural isomorphism
  $$\Mapo^\rigbar(\coSpan(\cC),\cU)\simeq \Map^\rex(\cC,\CAlgrig(\cU)).$$
\end{theorem}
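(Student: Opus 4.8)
The plan is to identify both mapping spaces with a common space of symmetric monoidal functors out of $\cC$, using the standard universal property of cospans (\cref{p:universal-propertey-of-cospans}) to control maps out of $\coSpan(\cC)$, and the isomorphism $\cC\isoto\CAlgrig(\coSpan(\cC))$ of \cref{p:can-equiv-rig} as the unit. All the pieces are already in place: \cref{l:cospan-is-rigbar} and \cref{l:rigbar-lands-in-rex} make the two functors land in $\Catrigbar$ and $\Cat_1^\rex$ respectively, so only the natural isomorphism of mapping spaces remains.

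For the comparison map, I would send a rigid bar compatible functor $F\colon\coSpan(\cC)\to\cU$ to $\CAlgrig(F)\circ\eta_\cC\colon\cC\to\CAlgrig(\cU)$, where $\eta_\cC$ is the isomorphism of \cref{p:can-equiv-rig}; this is right exact since $\CAlgrig(F)$ is right exact by \cref{l:rigbar-lands-in-rex} and $\eta_\cC$ is an equivalence, and it is manifestly natural. For the inverse, given a right exact $G\colon\cC\to\CAlgrig(\cU)$ I would form $U\circ G\colon\cC\to\cU$ with $U\colon\CAlgrig(\cU)\to\cU$ the forgetful functor. This composite is symmetric monoidal: $G$ preserves finite coproducts, hence is symmetric monoidal for the coCartesian structures (the coproduct of rigid commutative algebras being their tensor product, by \cref{p:bar-construction-is-pushout}), and $U$ is symmetric monoidal. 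It satisfies cobase change: condition (1) holds since each $G(f)$ is left adjoint to its transpose in $\Mod_{G(A)}(\cU)$, hence in $\cU$, by \cref{c:com-rigid-map-adjoint}; and condition (2) holds since $G$ preserves pushouts and $U$ satisfies cobase change by \cref{l:forgetful-cobase-change}. By \cref{p:universal-propertey-of-cospans}, $U\circ G$ then extends uniquely to a symmetric monoidal functor $\widehat G\colon\coSpan(\cC)\to\cU$ restricting along the right-way functor $j\colon\cC\to\coSpan(\cC)$ to $U\circ G$.

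The two points requiring genuine work are (i) that $\widehat G$ is rigid bar compatible, and more generally the identification ``$F$ rigid bar compatible $\iff$ its associated lift to $\CAlgrig(\cU)$ is right exact'', and (ii) that the two assignments are mutually inverse. For (i) one uses that a symmetric monoidal $2$-functor preserves rigid commutative algebras (it preserves adjunctions and the bimodule structure, hence the defining conditions of rigidity), so $\widehat G$ carries the rigid commutative algebras of $\coSpan(\cC)$ — which by \cref{p:can-equiv-rig} are exactly the objects of $\cC$ embedded via $j$ — to rigid commutative algebras of $\cU$; what must then be checked is preservation of the relevant relative tensor products. Here one invokes \cref{p:can-equiv-rig} applied to $\cC$ itself, together with \cref{p:bar-construction-is-pushout}: a relative tensor $B\otimes_A C$ of rigid commutative algebras in $\coSpan(\cC)$ is the image under $j$ of the pushout $B\sqcup_A C$ in $\cC$ (the bar complex has rigid entries, so it lies in the image of $j$, exactly as in the proof of \cref{l:cospan-is-rigbar}). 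Applying $\widehat G$ and using $\widehat G\circ j\simeq U\circ G$ sends this to $U$ of the pushout of $G(B),G(C)$ over $G(A)$ in $\CAlgrig(\cU)$, which by \cref{p:bar-construction-is-pushout} is the relative tensor $\widehat G(B)\otimes_{\widehat G(A)}\widehat G(C)$ in $\cU$; since $G$ is right exact this pushout is $G(B\sqcup_A C)$, giving compatibility, and conversely the same computation shows the lift of any rigid bar compatible $F$ is right exact.

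For (ii), starting from $F$: the lift $\widehat{U\circ G}$ with $G=\CAlgrig(F)\circ\eta_\cC$ satisfies $\widehat G\circ j\simeq U\circ G\simeq F\circ U_{\coSpan(\cC)}\circ\eta_\cC\simeq F\circ j$, using naturality of the forgetful functor and the fact (\cref{p:can-equiv-rig}) that $j$ factors as $\eta_\cC$ followed by the forgetful functor $\CAlgrig(\coSpan(\cC))\to\coSpan(\cC)$; hence $\widehat G\simeq F$ by the uniqueness in \cref{p:universal-propertey-of-cospans}. Starting from $G$, applying $\CAlgrig$ to $\widehat G$ and precomposing with $\eta_\cC$ recovers $G$, again by tracking the canonical commutative algebra structures through $\widehat G\circ j\simeq U\circ G$ and the fact that a right exact functor into $\CAlgrig(\cU)$ is determined by its composite with the conservative forgetful functor $U$. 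Finally, the adjunction is coreflective because its unit is the isomorphism $\eta_\cC$ of \cref{p:can-equiv-rig}. I expect the main obstacle to be precisely the bookkeeping in (i): ensuring that relative tensor products in $\coSpan(\cC)$, pushouts of rigid commutative algebras in $\CAlgrig(\cU)$, and relative tensor products in $\cU$ are matched up coherently, so that rigid bar compatibility of $\widehat G$ is exactly right exactness of $G$; once this dictionary is established, the rest is formal manipulation with the universal property of cospans.
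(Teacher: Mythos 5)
Your proposal is correct in substance and, once unwound, is the paper's proof in a different packaging: the paper constructs the unit (the isomorphism of \cref{p:can-equiv-rig}) and a counit (the extension of the forgetful functor $U\colon\CAlgrig(\cU)\to\cU$ via \cref{p:universal-propertey-of-cospans}, using \cref{l:forgetful-cobase-change}) and then checks the two triangle identities, whereas you exhibit the unit-induced map on mapping spaces and build an explicit inverse $G\mapsto\widehat G$; your $\widehat G$ for $G=\id$ is exactly the paper's counit, your identification $\widehat G\circ j\simeq F\circ j\Rightarrow \widehat G\simeq F$ is the paper's first triangle identity, and your ``track the canonical commutative algebra structures'' step is the paper's second. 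One genuine merit of your version is that you confront head-on the fact that the extension $\widehat G$ (in particular the counit) must be \emph{rigid bar compatible} to be a morphism of $\Catrigbar$, and that $U\circ G$ satisfies cobase change for a general right exact $G$; the paper leaves the first point implicit. Your dictionary in (i) is the right argument for it, resting on \cref{p:can-equiv-rig}, \cref{l:cospan-is-rigbar} and \cref{p:bar-construction-is-pushout}.

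The one step you justify incorrectly is the recovery of $G$ from $\widehat G$: you appeal to conservativity of $U$, but conservativity does not make a functor into $\CAlgrig(\cU)$ determined by its composite with $U$ (the same underlying object can carry inequivalent algebra structures). What is actually needed, and what your parenthetical ``tracking the canonical commutative algebra structures'' gestures at, is that a right exact functor $\cC\to\CAlgrig(\cU)$ is determined by $U\circ G$ \emph{as a symmetric monoidal functor} for the cocartesian structure on $\cC$: such a symmetric monoidal functor lifts canonically to $\CAlg(\cU)$, the lift is unique because $U$ is symmetric monoidal and $\CAlgrig(\cU)\hookrightarrow\CAlg(\cU)$ is fully faithful, and the identification $\widehat G\circ j\simeq U\circ G$ supplied by \cref{p:universal-propertey-of-cospans} is an identification of symmetric monoidal functors, so it does propagate to the algebra lifts. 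This is precisely the mechanism the paper uses for its second triangle identity, so the gap is repairable verbatim; but as written, the conservativity argument would not suffice.
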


\begin{proof}
  The unit map is the isomorphism $u\colon \cC\xrightarrow{\sim} \CAlgrig(\coSpan(\cC))$ of \cref{p:can-equiv-rig}.
  The counit map $c\colon \coSpan(\CAlgrig(\cU))\to \cU$ is constructed from the universal property of cospans (\cref{p:universal-propertey-of-cospans}) by the forgetful functor $U\colon \CAlgrig(\cU)\to \cU$, where cobase change holds by \cref{l:forgetful-cobase-change}.
  It remains to verify the zigzag identities.
  
  For the first identity, the following diagram commutes:
  \[\begin{tikzcd}
    \cC\arrow[r,"u"]\arrow[d] &\CAlgrig(\coSpan(\cC))\arrow[d]\arrow[rd,"U"] &\\
    \coSpan(\cC)\arrow[r,"u"]& \coSpan(\CAlgrig(\coSpan(\cC)))\arrow[r,"c"] & \coSpan(\cC)
  \end{tikzcd}\]
  where the vertical maps are the right-way functors.
  By the universal property of cospans, to show that the bottom composes to the identity it is enough to show that the top composes to the right-way functor.
  And indeed, the unit is given by a lifting of the right-way functor.

  For the second zigzag identity, the following diagram commutes:
  \[\begin{tikzcd}
    {\CAlgrig(\cU)} & {\CAlgrig(\coSpan(\CAlgrig(\cU)))} & {\CAlgrig(\cU)} \\
    & {\coSpan(\CAlgrig(\cU))} & \cU
    \arrow["u", from=1-1, to=1-2]
    \arrow[from=1-1, to=2-2]
    \arrow["c", from=1-2, to=1-3]
    \arrow["U", from=1-2, to=2-2]
    \arrow["U", from=1-3, to=2-3]
    \arrow["c", from=2-2, to=2-3]
  \end{tikzcd}\]
  where the sloped arrow is the right-way functor.
  The counit came from the forgetful functor via the universal property, so the composition along the bottom is the forgetful functor $U:\CAlgrig(\cU)\to \cU$.
  Hence, the composition along the top is a coproduct preserving functor $f\colon \CAlgrig(\cU)\to \CAlgrig(\cU)$ with a commuting diagram 
  \[\begin{tikzcd}
    {\CAlgrig(\cU)} && {\CAlgrig(\cU)} \\
    & \cU.
    \arrow["f", from=1-1, to=1-3]
    \arrow["U"', from=1-1, to=2-2]
    \arrow["U", from=1-3, to=2-2]
  \end{tikzcd}\]
  We claim the space of such functors is contractible, and in particular our functor is isomorphic to the identity.
  To prove this claim, note that $U$ is symmetric monoidal with respect to the cocartesian structure on $\CAlgrig(\cU)$, so the diagram lifts uniquely to 
  \[\begin{tikzcd}
    {\CAlgrig(\cU)} && {\CAlgrig(\cU)} \\
    & {\CAlg(\cU),}
    \arrow["f", from=1-1, to=1-3]
    \arrow[hook, from=1-1, to=2-2]
    \arrow[hook', from=1-3, to=2-2]
  \end{tikzcd}\]
  where $f$ is unique as $\CAlgrig(\cU)\hookrightarrow \CAlg(\cU)$ is fully faithful.
\end{proof}

\begin{remark}
  The fact that the adjunction in \cref{t:cospan_cfrob_adj} is coreflective implies that the functor $\coSpan\colon \Cat_1^\rex\to \Catrigbar$ is fully faithful.
  In fact, it follows that $\coSpan\colon \Cat_1^\rex\to \Catsm_2$ is also fully faithful.
  This can also be deduced directly from the universal property of cospans.
\end{remark}

\bibliographystyle{alpha}
\bibliography{references}
\end{document}